\documentclass[11pt]{article}

\usepackage{amsmath,amssymb,amsthm}
\usepackage{caption}
\usepackage{mathrsfs}
\usepackage{epsfig}
\usepackage{enumitem}
\usepackage{framed}
\usepackage{longtable}
\usepackage{graphicx,epstopdf}
\usepackage{subfigure}
\usepackage{url}
\usepackage{cases}
\usepackage{verbatim}
\usepackage{algpseudocode}
\usepackage{hyperref}
\usepackage{rotating}
\usepackage{multirow}
\usepackage{booktabs}
\usepackage{algorithmicx,algorithm}
\usepackage[font=scriptsize]{caption}
\allowdisplaybreaks[4]

\oddsidemargin 0pt
\evensidemargin 0pt
\marginparwidth 10pt
\marginparsep 10pt
\topmargin -20pt

\textwidth 6.5in
\textheight 8.5in
\parindent = 20pt

\newcommand{\B}{\mathbb{B}}
\newcommand{\D}{\mathbb{D}}
\newcommand{\E}{\mathbb{E}}
\newcommand{\R}{\mathbb{R}}

\newtheorem{theorem}{Theorem}
\newtheorem{proposition}{Proposition}
\newtheorem{lemma}{Lemma}
\newtheorem{corollary}{Corollary}

\newtheorem{definition}{Definition}

\title{A Decomposition Algorithm for Two-Stage Stochastic Programs\\ with Nonconvex Recourse}

\author{
Hanyang Li
\and Ying Cui\thanks{Department of Industrial and Systems Engineering, University of Minnesota, Minneapolis, MN 55455 (\href{mailto:li002492@umn.edu}{li002492@umn.edu}, \href{mailto:yingcui@umn.edu}{yingcui@umn.edu}). The authors are partially supported by NSF CCF-2153352.}}
\begin{document}
\bibliographystyle{plain}
\maketitle
\begin{abstract}
In this paper, we have studied a decomposition method for solving a class of nonconvex  two-stage stochastic programs, where both the objective and constraints of the second-stage problem are nonlinearly parameterized by the first-stage variables.
Due to the failure of the Clarke regularity of the resulting nonconvex recourse function, classical decomposition approaches such as Benders decomposition and (augmented) Lagrangian-based algorithms cannot be directly generalized to solve such models. By exploring an implicitly convex-concave structure of the recourse function, we introduce a novel decomposition framework based on the so-called partial Moreau envelope. The algorithm successively generates strongly convex quadratic approximations of the recourse function based on the solutions of the  second-stage convex subproblems and adds them to  the first-stage master problem. Convergence under both fixed scenarios and interior samplings is established. Numerical experiments are conducted to demonstrate the effectiveness of the proposed algorithm.

\paragraph{Keywords:}
two-stage stochastic program, nonconvex recourse, decomposition 
\end{abstract}

\section{Introduction}
\label{sec:introduction}
Stochastic programming (SP) is a mathematical framework to model the decision making in the presence of uncertainty \cite{birge2011introduction,shapiro2021lectures}. Two-stage SPs constitute a special class of this paradigm where partial  decisions  have to be made before the observation of the entire information, while the rest decisions are determined after the full information is revealed. 
Most of the existing computational study of the continuous two-stage SPs is devoted to convex problems, especially linear problems \cite{ruszczynski1997decomposition,birge2011introduction, higle2013stochastic, shapiro2021lectures}.  {
Although there is a significant amount of literature that tackles the nonconvexity of the SPs caused by the integrality of the decision variables \cite{bodur2017strengthened, boland2018combining, schultz2003stochastic, caroe1999dual}, the nonconvexity  is usually abandoned and ignored in the algorithmic development of continuous SPs.}

However, there are many emerging  applications in operations research and machine learning that call for complex nonlinear two-stage SP models and computational methods. Let us first introduce the mathematical formulation of such problems before discussing the applications.
The central optimization problem under consideration in this paper takes the following form:
\begin{equation}\label{eq:first_stage}
\displaystyle{
\operatornamewithlimits{\mbox{minimize}}_{x \, \in \, X}
} \;\, \zeta(x) \triangleq \varphi(x) + \mathbb{E}_{\tilde\xi}\left[ \, \psi(x;\tilde\xi) \, \right],
\end{equation}
where  $\psi(x;\xi)$ is the second-stage recourse function that is given by:
\begin{equation}
    \label{eq:second_stage}
  \psi(x;\xi) \, \triangleq \, 
\left\{\begin{array}{ll}
 \displaystyle\operatornamewithlimits{minimum}_{y}  \left[\, f(x,y;\xi) \;\;
    \mbox{subject to}   \;\; G(x,y;\xi)\leq 0 \,\right] & \mbox{if {$x\in \overline X$}} \\[0.1in]
    +\infty & \mbox{if {$x\notin \overline X$}}
    \end{array}\right..
\end{equation}
In the above formulation, {$X$ and $\overline X$ are nonempty convex compact subsets in $\mathbb R^{n_1}$ with $X \subseteq \operatorname{int}(\overline X)$}, $\varphi:\mathbb{R}^{n_1} \to \mathbb{R}$ is a deterministic {convex} function that only depends on the first-stage decision $x$;
 $\tilde\xi: \Omega \rightarrow \Xi$ is a random vector on a probability space $(\Omega, \mathcal F, \mathbb P)$ with $\Xi\subseteq\mathbb{R}^m$ being a measurable closed set; $\xi = \tilde\xi(\omega)$ for some $\omega \in \Omega$ represents a  realization  of  the  random  vector $\tilde\xi$; and $f:\mathbb{R}^{n_1+n_2}\times \Xi\to\mathbb{R}$ and $G\triangleq (g_1, \ldots, g_\ell)^\top:\mathbb{R}^{n_1+n_2}\times \Xi\to\mathbb{R}^\ell$ are two Carath{\'e}odory functions (i.e., $f(\bullet,\bullet;\xi)$ and $G(\bullet,\bullet;\xi)$ are continuous for almost any $\xi\in\Xi$; $f(x,y;\bullet)$ and $G(x,y;\bullet)$ are measurable for any $(x,y) \in \mathbb{R}^{n_1 + n_2}$) that are jointly determined by the first-stage variable $x$ and the second-stage variable $y$. 
{We assume that, for almost any $\xi\in \Xi$, the function $f(\bullet, \bullet; \xi)$ is concave-convex (i.e., $f(\bullet,y;\xi)$ is concave for $y\in \mathbb{R}^{n_2}$ and $f(x,\bullet;\xi)$ is convex for $x\in \mathbb{R}^{n_1}$),  and $g_i(\bullet,\bullet; \xi)$ is jointly convex for each $i=1, \ldots, \ell$. 
}

An example of concave-convex $f(\bullet,\bullet;\xi)$ is a bilinear function $x^\top D(\xi) y$ for some random matrix $D(\xi)\in \mathbb{R}^{n_1\times n_2}$. The above settings notably extend the classical paradigm for continuous two-stage SPs \cite[Chapter 2.3]{shapiro2021lectures} in the following directions:\\[0.08in]
(i) The first-stage variable $x$  not only appears in the constraints  of the second-stage problem, but also in the objective $f$.  The recourse function $\psi(\bullet;\xi)$ is nonconvex since $f(\bullet,\bullet; \xi)$ is not jointly convex. This is fundamentally different from the 
recent papers \cite{guigues2021inexact,guigues2021inexactmirror} that have assumed the joint convexity of $f(\bullet,\bullet;\xi)$ .\\[0.05in]
(ii) Both the objective function $f$ and the constraint map $G$ can be  nonsmooth.\\[0.08in]
These  two features together lead to {\sl a complex nonconvex and nonsmooth recourse function $\psi(\bullet;\xi)$} (see Figure \ref{fig:nonconvex} below), which constitutes the major challenge for designing rigorous and efficient numerical methods to solve problem \eqref{eq:first_stage}. 

\begin{figure}[h]
    \centering
    \subfigure{
    \begin{minipage}[t]{0.3\linewidth}
    \centering
    \includegraphics[width=1\textwidth]{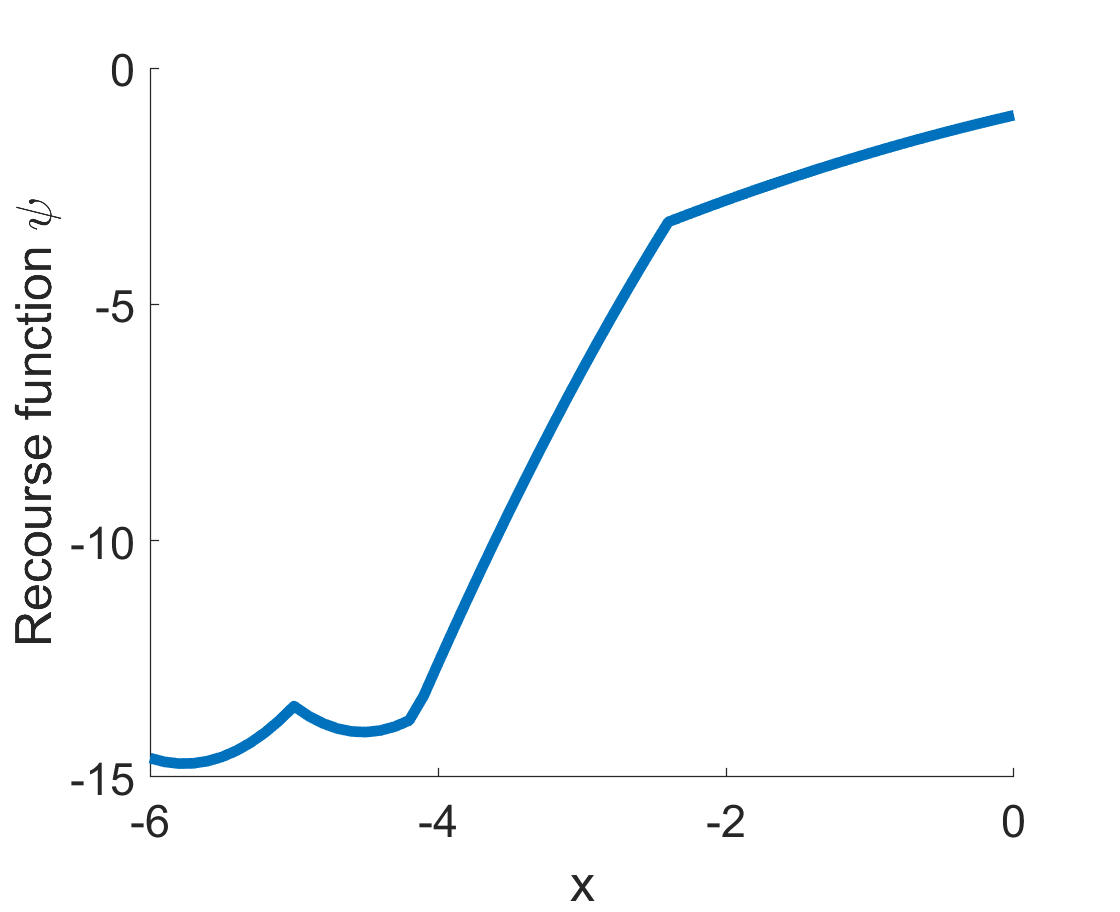}
    \end{minipage}
    }
    \qquad\qquad
    \subfigure{
    \begin{minipage}[t]{0.35\linewidth}
    \centering
    \includegraphics[width=1\textwidth]{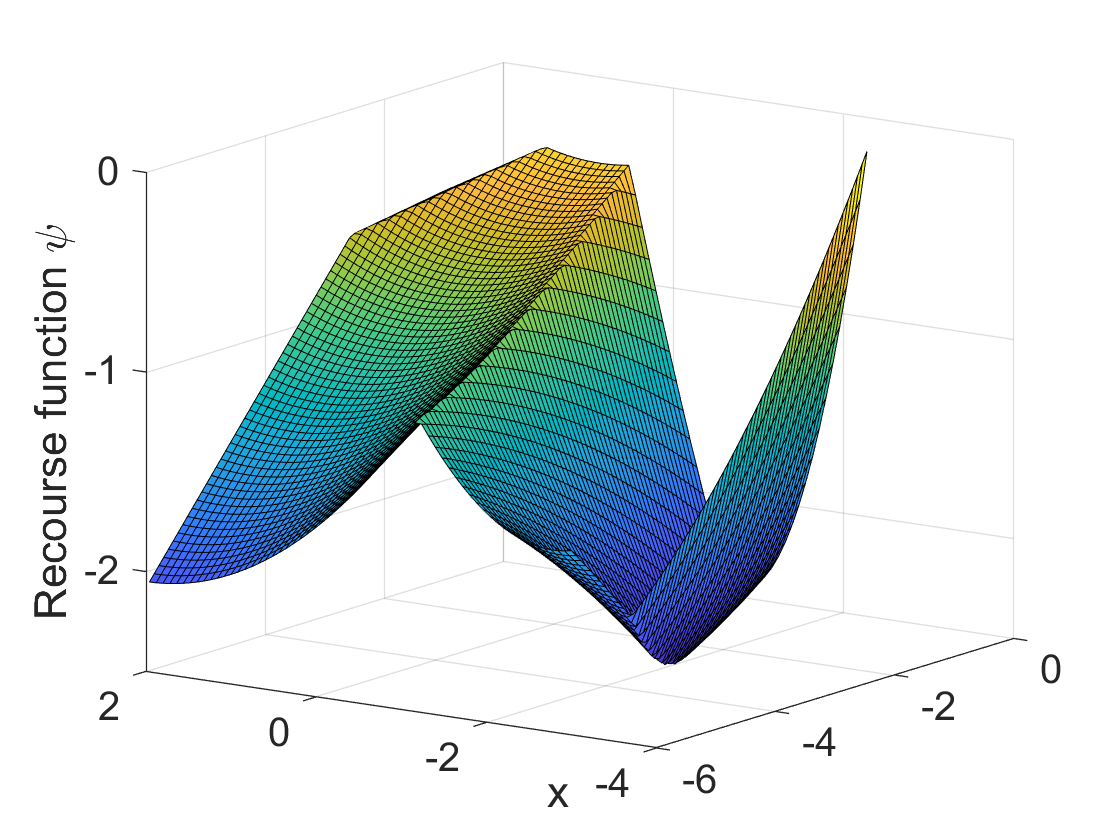}
    \end{minipage}
    }
    \centering
    \caption{The nonconvex nonsmooth recourse functions for fixed $\xi$'s. {\bf Left}: $x\in \mathbb{R}$; {\bf Right}: $x\in \mathbb{R}^2$.}
    \label{fig:nonconvex}
\end{figure}

Recourse functions in the form of \eqref{eq:second_stage} arise from many applications. One important source of the nonconvex recourse in \eqref{eq:second_stage} comes from the decision dependent/influenced uncertainty \cite{hellemo2018decision,nohadani2018optimization, jonsbraaten1998class, goel2004stochastic}, 
where the probability distribution $\mathbb{P}(x)$ of the random vector $\tilde\xi$  is dependent on the first-stage variable $x$. 
This is in contrast to the classical 
SP paradigm under exogenous uncertainty, where the distribution of $\tilde\xi$ is not affected by the first-stage decisions.
There is growing interest in the endogenous uncertainty in recent literature of stochastic and robust programs \cite{goel2005lagrangean,goel2006class, tarhan2009stochastic,hellemo2018decision}.
A typical example where the random parameters can be altered by a decision is that the price (as a first-stage variable) may affect the distribution of the product demand. 
{Assume that the probability distribution of $\tilde\xi$ is given by $\mathbb P_x$ that depends on the first-stage variable $x$. We consider the corresponding two-stage SP model:}
\[
\displaystyle{
\operatornamewithlimits{\mbox{minimize}}_{x \, \in \, X}
} \ \left\{\, \varphi(x) + \mathbb{E}_{\tilde\xi\sim \mathbb{P}_x}[\psi(x;\tilde\xi)] = \varphi(x) + \displaystyle{
\int_{\Xi}
} \, {\psi(x;\xi) \mbox{d}\mathbb P_x}\,\right\}.
\]
{If there exists a decision-independent distribution $\widehat{\mathbb P}$ such that $\mathbb P_x$ is absolutely continuous with respect to $\widehat{\mathbb P}$ for any $x \in X$, we can reformulate the above problem by applying \cite[Proposition 3.9]{folland1999real} as:}
\begin{equation}\label{eq:dd_transform}
    \displaystyle{\operatornamewithlimits{\mbox{minimize}}_{x \, \in \, X}} \quad \varphi(x) + \mathbb E_{\tilde\xi \sim \widehat{\mathbb P}} \,[\widehat{\psi}(x;\tilde\xi)], \;\, \mbox{with} \;\,
\widehat{\psi}(x;\tilde\xi) \, \triangleq \,  \psi(x;\tilde\xi)\displaystyle\frac{\mbox{d}\mathbb P_x}{\mbox{d}\widehat{\mathbb P}},
\end{equation}
{where $\mbox{d}\mathbb P_x/\mbox{d}\widehat{\mathbb P}$ is the Radon-Nikodym derivative of $\mathbb P_x$ with respect to $\widehat{\mathbb P}$. Even if originally  the first-stage decision $x$ only appears in the constrains of the second-stage problem in $\psi(x;\xi)$, the above transformation would make $x$ also appear in the second-stage objective function through the multiplication of $\mbox{d}\mathbb P_x/\mbox{d}\widehat{\mathbb P}$. 
}

{
A specific  example of the decision-dependent uncertainty in SPs is  a power system planning problem originated in \cite{louveaux1988optimal} and expanded in \cite{hellemo2018decision}. Assume that $\tilde\xi = (\{\tilde{d}_j\}_{j\in {\cal J}}, \{\tilde{\pi}_j\}_{j\in {\cal J}}, \{\tilde{q}_i\}_{i\in {\cal I}})$ follows a discrete distribution with the support $\{\xi^s\}_{s=1}^S$, where $\tilde{d}_j$ and $\tilde{\pi}_j$ represent the demand and the price of electricity in the location $j\in\mathcal J$, and $\tilde{q}_i$ is the unit production cost of  the power plant $i \in \mathcal I$.
For each $s = 1, \cdots, S$,
the probability of $\tilde\xi = \xi^s$ is the linear combination of  $|\mathcal G|$ given different distributions (each with probability $p_{sg}$) whose weights $\{x_g\}_{g\in\mathcal G}$ are determined as parts of 
the first-stage decisions{, i.e., $\mathbb P_x(\tilde\xi = \xi^s)=\sum_{g\in\mathcal G} p_{sg}\,x_g$ for each $s$.}
The capacity of each power plant $\{x_i\}_{i\in\mathcal I}$ also needs to be determined in the first stage. The second-stage decisions are the production $y_{ij}$ from the power plant $i$ to the  location $j$ for each  $s$.  
{By letting $\widehat{\mathbb P}(\tilde\xi = \xi^s) = 1/S$ for  each $s$ in \eqref{eq:dd_transform}}, we can rewrite the recourse function as
\begin{equation}\label{ex:recourse}
\begin{split}
	\widehat\psi(\{x_i\}_{i\in\mathcal I}, \{x_g\}_{g\in\mathcal G};\xi^s)  = &\displaystyle\operatornamewithlimits{minimum}_{l_z \leq y \leq u_z}  \;\, S \sum\limits_{g\in\mathcal G} p_{sg}\,x_g \sum\limits_{i\in\mathcal I,\, j\in\mathcal J} (q_{is}-\pi_{js})\,y_{ij}\\
    &\mbox{subject to}  \; \displaystyle\sum\limits_{j\in\mathcal J} y_{ij}\leq x_{i},\; i\in\mathcal I;
	\; \sum\limits_{i\in\mathcal I} y_{ij} = d_{js},\; j\in\mathcal J
\end{split}
\end{equation}
and obtain a decision-independent SP with the recourse $\widehat\psi$. Observe that both the objective and constraints depend on the first-stage variables. In particular, the objective function is convex in $x_g$ and concave in $y$, which fits our problem setting. Later, we will apply our proposed algorithms to solve a two-stage SP with the above recourse in Section \ref{sec:exp}.}

The second example of  the nonconvex recourse in \eqref{eq:second_stage} is the stochastic interdiction problem \cite{cormican1998stochastic,hao2020piecewise}, where the defender may want to maximize the second-stage objective function instead of minimizing it. Even for the simple linear second-stage problem with only $x$ appearing in the constraints, the recourse function 
\[
\begin{array}{rl}
    \label{eq:second_stage defend}
    \widetilde\psi(x;\xi) \, \triangleq \, 
    \displaystyle\operatornamewithlimits{maximum}_y  &\;\, c(\xi)^\top y\\[0.05in]
    \mbox{subject to}  & \;\, T(\xi)x+W(\xi)y=h(\xi)
\end{array}
\]
is not convex in $x$. One may take the dual of the second-stage maximization problem so that the recourse is a parametrized  minimization problem
\[
\begin{array}{rl}
    \label{eq:second_stage reformulate}
    \widetilde\psi(x;\xi) \, = \, 
    \displaystyle\operatornamewithlimits{minimum}_{\lambda}  &\;\, \lambda^\top T(\xi) x - \lambda^\top h(\xi)\\[0.05in]
    \mbox{subject to}  & \;\, W(\xi)^\top \lambda + c(\xi) = 0. 
\end{array}
\]
However, this dualization would bring a bilinear term $\lambda^\top T(\xi) x$ of the first-stage variable $x$ and the second-stage variable $\lambda$ to the objective function that necessities the concave-convex structure of $f(\bullet,\bullet;\xi)$. To the best of our knowledge, there is no known rigorous decomposition method to solve a general  nonconvex two-stage min-max stochastic programs  even under the linear setting.


When the distribution of $\tilde\xi$ is  taken as the empirical distribution of observed realizations $\xi^1,\ldots, \xi^S$,  the simplest way to tackle the problem \eqref{eq:first_stage} is to simultaneously solve the first-stage variable $x$ and  second-stage variables $y^1, \ldots, y^S$ (each $y^s$ is associated with one scenario $\xi^s$) via the sample average approximation  \cite{shapiro2021lectures}: 
\begin{equation}\label{determinstic}
\begin{array}{ll}
\displaystyle{
\operatornamewithlimits{\mbox{minimize}}_{x \, \in \, X, \;y^1, \ldots, y^S}
} & \;\, \varphi(x) + \displaystyle\frac{1}{S}\sum_{s=1}^S f(x,y^s;\xi^s)\\[0.15in]
    \mbox{subject to}  & \;\, G(x,y^s;\xi^s) \leq 0,\quad s=1, \ldots, S.
    \end{array}
\end{equation}
However, this approach can be prohibitive when the number of scenarios $S$ is large since the dimension of the unknown variables is $n_1+n_2 S$. Even if $S$ is small or moderate, the above formulation may still be difficult to handle under our setting as the function $f(\bullet,\bullet;\xi)$ is not jointly convex (for example when $f(\bullet,\bullet;\xi)$ is bilinear). In fact, the nonconvexity of the recourse function also makes it challenging to apply the stochastic approximation method \cite{robbins1951stochastic, polyak1990new, nemirovski2009robust} to solve \eqref{eq:first_stage}, since it is not clear how to obtain a (Clarke) subdifferential of the nonconvex  recourse function in \eqref{eq:first_stage}. Without strong assumptions like the uniqueness of the second-stage solutions, only a superset of the subdifferential $\partial \psi(\bullet;\xi)$ at given $x$ is computable \cite[Chapter 4]{bonnans2013perturbation}. When $f$ and $G$ are twice continuously differentiable, the authors in  \cite{borges2020regularized} have adopted a smoothing method to deal with the possibly nonconvex recourse  by adding the Tikhonov-regularized barrier of the inequality constraints  to the second-stage objective function. For a special class of two-stage nonconvex quadratic SPs under the simplex constraint, 
the paper \cite{bomze2022two} has derived upper and lower approximations of the objective values via copositive programs.

Notice that the constraints in \eqref{determinstic}  are in fact block-wise separable in $y^1, \ldots, y^S$ so that there is a block-angular structure between the first- and second-stage variables. 
Decomposition algorithms of two-stage SPs take advantage of this special structure to efficiently handle a large number  of scenarios via solving $S$ numbers of low-dimensional subproblems \cite{ruszczynski1997decomposition}. Two classical decomposition algorithms for two-stage SPs are  (augmented) Lagrangian decomposition and Benders decomposition.
{\it (Augmented) Lagrangian decompositions} (including the progressive hedging algorithm)  copy the first-stage variable $S$ times and attach one to each scenario \cite{guignard2003lagrangean,rockafellar1991scenarios}. In order to force the  non-anticipativity  of the first-stage decision, one has to add  equality constraints among all copies  to ensure that $x$ is the same across different realizations of the uncertainty. However, there are two major bottlenecks to apply such kind of dual-based algorithms to solve the problem \eqref{eq:first_stage}. One, each subproblem pertaining to one pair of variables $(x^s, y^s)$ is still  nonconvex if $f$ is not a jointly convex function, so that it is in general not easy to obtain its global optimal solution.  Two, the convergence of these dual approaches is largely restricted to the convex problems or special integer problems \cite{rockafellar1991scenarios,chun1995scenario}. Although there are some recent advances for the convergence study  of the progressive hedging algorithm for solving nonconvex SPs under the local convexity conditions \cite{rockafellar2019progressive,rockafellar2020augmented}, it is not clear whether the problem \eqref{eq:first_stage} satisfies those conditions without further assumptions on $f$ and $g$.
{\it Benders decomposition} (or L-shaped methods) \cite{benders1962partitioning,van1969shaped, wets1984large} alternatively updates the first-stage and second-stage variables, where
the second-stage subproblem can be solved in parallel to save the computational time and reduce storage burden. 
In order to derive 
valid inequalities of $x$ and add them to the first-stage master problem, one usually uses subgradient inequalities of the (convex) recourse function to generate a sequence of lower approximations. However, when the recourse function is associated with the complex nonconvex  function in \eqref{eq:first_stage}, it is challenging to derive its lower bounds based on the computed second-stage solutions. In fact, for the recourse functions in Figure \ref{fig:nonconvex},  there seems   not to exist a convex function that passes one of the downward cusps  and at the same time  approximates the original function from below.


In this paper, we tackle the two-stage stochastic programs \eqref{eq:first_stage} by a novel lifting technique that transforms the complex nonconvex and nondifferentiable recourse function \eqref{eq:second_stage} in the original space to a structured convex-concave function in a lifted space. The reveal of this latent structure enables us to construct convex surrogation of all recourse functions at the latest first-stage iterate,  whose evaluations are decomposable across different scenarios. Such surrogate functions are then added to the master problem to generate the next first-stage iterate. We shall prove that repeating the above procedure, the sequence of the first-stage iterates converges to a properly defined stationary solution of \eqref{eq:first_stage}.
In order to further reduce the computational cost per step when the number of scenarios $S$ is large as well as to handle the case where $\tilde\xi$ is continuously distributed, we also propose a framework that incorporates sequential sampling into the surrogation algorithm. 
The sequential sampling method gradually adds scenarios and generates cuts along the iterations, which has the advantage that one may obtain satisfactory descent progress in the early iterations with relatively small sample sizes to accelerate the overall procedure.

The paper is organized as follows. Section \ref{sec:pre} introduces notation and provides preliminary knowledge. In Section \ref{sec:approx}, we discuss the implicitly convex-concave structure of the recourse function and derive its computationally tractable approximations. A decomposition algorithm for solving problem \eqref{eq:first_stage} with a fixed number of scenarios  is proposed and analyzed in Section \ref{sec:algorithm}. To further handle the continuously distributed random vectors as well as to reduce the computational cost of the decomposition algorithm in its early stage, we provide an  internal sampling version of the algorithm in Section \ref{sec:sampling} and show the almost surely convergence of the iterative sequence.  In Section \ref{sec:exp}, we conduct extensive numerical experiments to show the effectiveness of our proposed frameworks. The paper ends with a concluding section.

\section{Preliminaries}
\label{sec:pre}
We first summarize the  notation used throughout the paper. We write $\mathbb{Z}_+$ as the set of all nonnegative integers, and $\mathbb{R}^n$ as the $n$-dimensional Euclidean space equipped with the inner product $\langle x,y\rangle = x^\top y$ and the induced norm $\|x\| \triangleq  \sqrt{x^\top x}$.
The symbol $\B(x, \delta)$ is used to denote the closed ball of radius $\delta>0$ centered at a vector $x \in \R^n$. Let $A$ and $C$ be two {nonempty subsets} of $\mathbb{R}^n$. The diameter of $A$ is defined as $R(A)\triangleq\sup\limits_{x,y\in A}\| x - y \|$, and the distance from a vector $x\in \mathbb{R}^n$ to  $A$ is defined as $\operatorname{dist}(x, A) \triangleq \inf\limits_{y \in A} \| y - x \|$. The one-sided deviation of $A$ from $C$ is defined as $\D(A, C) \, \triangleq \,  \sup\limits_{x \in A} \operatorname{dist}(x, C)$.

We next introduce the concepts of  generalized derivatives and subdifferentials for nonsmooth functions.
Interested readers are referred to the monographs \cite{clarke1990optimization, rockafellar2009variational, mordukhovich2006variational} for thorough discussions on these subjects. Consider a function $f: {\cal O} \rightarrow \R$ defined on an open set ${\cal O}\subseteq \R^n$. The classical one-sided directional derivative of $f$ at $\bar x\in {\cal O}$ along the direction $d\in\R^n$ is defined as $f^\prime(\bar x; d) \, \triangleq \, \displaystyle\lim_{t \downarrow 0} \, \frac {f(\bar x + t d) - f(\bar x)} {t}$ if this limit exists. The function $f$ is said to be directionally differentiable at $\bar x\in {\cal O}$ if it is directionally differentiable along any direction $d\in\R^n$. 
In contrast, the Clarke directional derivative of $f$ at $\bar x\in {\cal O}$ along the direction $d\in\R^n$ is defined as 
$f^\circ(\bar x; d) \,  \triangleq \, \displaystyle\operatornamewithlimits{limsup}_{x\to \bar{x},\, t\downarrow 0} \, \frac {f(x + t d) - f(x)} {t},$
which is finite when $f$ is Lipschitz continuous near $\bar x$.
        The Clarke subdifferential of $f$ at $\bar x$ is the set $\partial_C f(\bar x) \triangleq\{ v\in \mathbb{R}^n \mid f^\circ(\bar x;d) \geq v^\top d \text{ for all $d \in \R^n$} \}$, which coincides with the usual subdifferential in convex analysis for a convex function. If $f$ is strictly differentiable at $\bar{x}$, then $\partial_C f(\bar{x}) = \{\nabla f(\bar{x})\}$. 
We say that $f$ is Clarke regular at $\bar x\in {\cal O}$ if $f$ is  directionally differentiable at $\bar x$ and $f^\circ(\bar x;d)=f^\prime(\bar x;d)$ for all $d\in\R^n$. This Clarke regularity at $\bar{x}$ is equivalent to have 
$
f(x) \geq f(\bar{x}) + \bar{v}^\top (x-\bar{x}) + o(\|x-\bar{x}\|)
$ for any $\bar{v}\in \partial_C f(\bar{x})$. Therefore, if a function fails to satisfy the Clarke regularity at $\bar{x}$ (for example at the downward cusp on the left panel of Figure \ref{fig:nonconvex}), there does not exist an approximate linear lower bound of the original function based on the Clarke subdifferentials with small $o$ error locally.
    
    Let $X\subseteq \mathbb{R}^n$ be a nonempty closed convex set and $f:\mathbb{R}^n \to \mathbb{R}$ be a locally Lipschitz continuous function that is directionally differentiable. 
    We say $\bar x\in X$ is a
  directional-stationary point of $f$ on $X$ if $f^\prime (\bar x; x-\bar x) \geq 0$ for all $x \in X$, and a 
Clarke-stationary point if $f^\circ(\bar x; x-\bar x) \geq 0$ for all $x\in X$; the latter is equivalent to 
$0 \in \partial_C f(\bar x)+\mathcal N_{X}(\bar x)$ with $\mathcal N_{X}(\bar x)$ being the normal cone of $X$. 

Let $\mathcal F: \R^n \, {\rightrightarrows} \, \R^m$ be a set-valued mapping. Its outer limit at $x\in \mathbb{R}^n$ is defined as
\[
    \limsup\limits_{x \rightarrow \bar x} \mathcal F(x)
    \triangleq \bigcup\limits_{x^\nu \rightarrow \bar x} \limsup\limits_{\nu \rightarrow \infty} \mathcal F(x^\nu)
    = \big\{ u \mid \exists \; x^\nu \rightarrow \bar x, \exists \; u^\nu \rightarrow u \text{ with } u^\nu \in \mathcal F(x^\nu) \big\}.
\]
We say  $\mathcal F$ is outer semicontinuous (osc) at $\bar x\in \mathbb{R}^n$ if $\limsup\limits_{x \rightarrow \bar x} \mathcal F(x) \subseteq \mathcal F(\bar x)$.

\section{The implicit convexity-concavity  of the recourse functions}
\label{sec:approx}

A key ingredient to design a decomposition method for solving the two-stage SP \eqref{eq:first_stage} is to derive a computational-friendly approximation of the nonconvex recourse function \eqref{eq:second_stage} at any given $x\in X$ and $\xi \in \Xi$. This is the main content of the present section.

For simplicity, we omit $\xi$ in \eqref{eq:second_stage} throughout this section and rewrite the recourse function as, for $x\in  \mathbb{R}^{n_1}$,  
\begin{equation}\label{eq:value_function}
    \psi(x) \, \triangleq \, 
\left\{\begin{array}{ll}
 \displaystyle\operatornamewithlimits{minimum}_{y}  \left[\, f(x,y) \;\;
    \mbox{subject to}   \;\; G(x,y)\leq 0 \,\right] & \mbox{if {$x\in \overline X$}} \\[0.1in]
    +\infty & \mbox{if {$x\notin \overline X$}}
    \end{array}\right.,
\end{equation}
{
where $f(\bullet,\bullet)$ is concave-convex and $G(\bullet,\bullet)$ is jointly convex. We assume that for any {$x \in \overline X$}, the minimization problem of $y$ in \eqref{eq:value_function} has an optimal solution, which implies the finiteness of $\psi(x)$ on { $\overline X$}.} In the following, we show that the above function, although generally being nonconvex and nondifferentiable in $\mathbb{R}^{n_1}$, has a benign structure in a lifted space. Leveraging this structure, we then derive an approximate difference-of-convex decomposition of the recourse function that is computationally tractable.  Such an approximation is the cornerstone of  the decomposition methods to be presented in the next two sections.

\subsection{The implicit convexity-concavity of $\psi$}
As mentioned in the first section, the difficulty to design a decomposition method for solving \eqref{eq:first_stage} is due to the lack of a valid inequality of the recourse function, which is partially because 
$x$ appears in both the objective and constraints of the parametric problem in \eqref{eq:value_function}. However, if either $x$ in the objective or in the constraints is fixed, the resulting functions are relatively easy to analyze. Specifically, for any fixed ${\bar{x}\in \overline X}$, consider the functions
\[
  \psi_{\rm cvx}(x)
  \triangleq 
    \left[ \begin{array}{rl}
    \displaystyle\operatornamewithlimits{minimum}_{y} & f(\bar{x},y)\\[0.05in]
    \mbox{subject to} &   G(x,y)\leq 0
    \end{array}\hspace{-0.05in} \right]\hspace{-0.05in}\;\;\;\mbox{and}\;\;\;
\psi_{\rm cve}(x)
   \triangleq 
    \left[ \begin{array}{rl}  \displaystyle\operatornamewithlimits{minimum}_{y} &  f(x,y)\\[0.05in]
    \mbox{subject to} &  G(\bar{x},y)\leq 0
    \end{array}\hspace{-0.05in}\right].
\]
The following structural properties of $\psi_{\rm cvx}$ and $\psi_{\rm cve}$ can be easily derived. We include the proof here for completeness.

\begin{lemma}\label{lemma:icc}
    Let ${\bar x\in \overline X} \subseteq \mathbb{R}^{n_1}$ be fixed. {Suppose that the minimization problems in defining $\psi_{\rm cvx}$ and $\psi_{\rm cve}$ both have nonempty solution sets for any} {$x \in \overline X$}. Then the function $\psi_{\rm cvx}$ is convex and $\psi_{\rm cve}$ is concave on {$\overline X$}. 
\end{lemma}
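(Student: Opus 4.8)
The plan is to treat the two claims separately, exploiting the fact that in each function the coupling of $x$ through both the objective and the constraints has been severed: in $\psi_{\rm cvx}$ the objective is frozen at $\bar x$, while in $\psi_{\rm cve}$ the constraints are frozen at $\bar x$. In both cases the argument reduces to an elementary verification of the defining inequality for convexity (resp.\ concavity).

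For the convexity of $\psi_{\rm cvx}$ I would argue directly from the definition. Fix $x_1, x_2 \in \overline X$ and $\lambda \in [0,1]$, and set $x_\lambda = \lambda x_1 + (1-\lambda) x_2$. By the standing hypothesis the defining program admits minimizers $y_1, y_2$ at $x_1, x_2$. The key step is to check that $y_\lambda = \lambda y_1 + (1-\lambda) y_2$ is feasible for $x_\lambda$: since $G(\bullet,\bullet)$ is jointly convex and $G(x_i, y_i) \le 0$, one gets $G(x_\lambda, y_\lambda) \le \lambda G(x_1, y_1) + (1-\lambda) G(x_2, y_2) \le 0$ componentwise. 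Feasibility of $y_\lambda$ then yields $\psi_{\rm cvx}(x_\lambda) \le f(\bar x, y_\lambda)$, and convexity of $f(\bar x, \bullet)$ gives $f(\bar x, y_\lambda) \le \lambda f(\bar x, y_1) + (1-\lambda) f(\bar x, y_2) = \lambda \psi_{\rm cvx}(x_1) + (1-\lambda) \psi_{\rm cvx}(x_2)$, which is the desired inequality. (Equivalently, $\psi_{\rm cvx}$ is the partial minimization over $y$ of the function equal to $f(\bar x, y)$ on the convex set $\{(x,y) : G(x,y) \le 0\}$ and $+\infty$ elsewhere; this function is jointly convex in $(x,y)$, and partial minimization of a jointly convex function over one block preserves convexity.)

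For the concavity of $\psi_{\rm cve}$ the crucial observation is that the feasible set $Y \triangleq \{ y : G(\bar x, y) \le 0 \}$ no longer depends on $x$; it is a fixed set. Hence $\psi_{\rm cve}(x) = \inf_{y \in Y} f(x, y)$ is a pointwise infimum, over the index set $Y$, of the family of functions $x \mapsto f(x, y)$, each of which is concave by the concave-convex assumption on $f$. Since the pointwise infimum of an arbitrary collection of concave functions is concave, the claim follows. Concretely, for $x_1, x_2, x_\lambda$ as above and any $y \in Y$, concavity in the first argument gives $f(x_\lambda, y) \ge \lambda f(x_1, y) + (1-\lambda) f(x_2, y) \ge \lambda \psi_{\rm cve}(x_1) + (1-\lambda) \psi_{\rm cve}(x_2)$, where the second inequality uses $f(x_i, y) \ge \psi_{\rm cve}(x_i)$ for the feasible point $y$; taking the infimum over $y \in Y$ on the left transfers the bound to $\psi_{\rm cve}(x_\lambda)$.

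I do not anticipate a genuine obstacle here; the statement is a routine consequence of convex analysis once the structural roles of $f$ and $G$ have been disentangled. The only point requiring mild care is the asymmetry between the two arguments: for $\psi_{\rm cvx}$ the nonempty-solution-set hypothesis is used to produce the endpoint minimizers $y_1, y_2$ whose convex combination certifies the upper bound, whereas for $\psi_{\rm cve}$ no attainment is needed and the infimum-of-concave-functions characterization applies directly. One should also note that finiteness of $\psi_{\rm cvx}$ and $\psi_{\rm cve}$ on $\overline X$, guaranteed by the attainment assumption, rules out degenerate $\pm\infty$ values so that the convexity and concavity inequalities are meaningful throughout.
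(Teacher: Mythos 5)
Your proposal is correct and follows essentially the same route as the paper's proof: endpoint minimizers plus joint convexity of $G$ certify feasibility of the convex combination for $\psi_{\rm cvx}$, and $\psi_{\rm cve}$ is recognized as a pointwise infimum of the concave functions $f(\bullet,y)$ over the fixed feasible set $\{y : G(\bar x, y) \le 0\}$. Your side remarks---that attainment is not actually needed for the concavity half, and the alternative view of $\psi_{\rm cvx}$ as partial minimization of a jointly convex function---are accurate refinements but do not change the argument.
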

\begin{proof}
   {It is not difficult to see that $\psi_{\rm cvx}$ and $\psi_{\rm cve}$ is finite for any {$x \in \overline X$} since corresponding minimization problems have nonempty solution sets.} Consider any $\lambda\in[0,1]$ and any $x^1,x^2\in {\overline X}$. We first show that
    \begin{equation}\label{eq:proof lemma}
    \psi_{\rm cvx} \big(\lambda x^1 + (1-\lambda) x^2\big) \leq \lambda \, \psi_{\rm cvx} (x^1) + (1-\lambda) \,\psi_{\rm cvx} (x^2).
    \end{equation}
    Let $\bar{y}^1$ and $\bar{y}^2$ be one of the optimal solutions of the optimization problems in defining $\psi_{\rm cvx}(x^1)$ and $\psi_{\rm cvx}(x^2)$ respectively. The joint convexity of each $g_j$ for $j=1, \cdots, \ell$ implies that
    \[
    G\big(\lambda x^1+(1-\lambda)x^2, \, \lambda \bar{y}^1+(1-\lambda)\bar{y}^2\big) \leq 0, \quad \forall \, j=1,\ldots,\ell,
    \]
which yields the feasibility of  $\lambda \bar{y}^1+(1-\lambda)\bar{y}^2$  for the minimization problem in \eqref{eq:lifting} when $x$ takes the value $\lambda x^1+(1-\lambda)x^2$. We can thus deduce that $\psi_{\rm cvx} \big(\lambda x^1 + (1-\lambda) x^2\big) \leq f \big(\bar{x}, \lambda \bar{y}^1 + (1-\lambda) \bar{y}^2\big)$. The convexity of $f(\bar{x},\bullet)$  further yields that
    \begin{gather*}
        f \big(\bar{x}, \lambda \bar{y}^1 + (1-\lambda) \bar{y}^2\big)
        \leq \lambda f (\bar{x}, \bar{y}^1) + (1-\lambda) f (\bar{x}, \bar{y}^2) = \lambda \psi_{\rm cvx} (x^1) + (1-\lambda) \psi_{\rm cvx} (x^2),
    \end{gather*}
which proves the inequality \eqref{eq:proof lemma}.
    The concavity of $\psi_{\rm cve}$ is due to the fact that it is the minimum of concave functions $\{f(\bullet,y)\}$ for $y$ feasible to the problem in \eqref{eq:lifting}.
\end{proof}

Lemma \ref{lemma:icc} suggests that the  recourse function \eqref{eq:value_function}  has a hidden convex-concave structure. Indeed, such a function $\psi$ belongs to a special class of nonconvex functions coined {\sl implicitly convex-concave} (icc) functions that are formally defined below. {For an extended-real-valued function $f:\mathbb R^n \rightarrow \mathbb R \cup \{\pm\infty\}$, the effective domain of $f$ is defined as $\operatorname{dom} f \triangleq \{x \in \mathbb R^n \mid f(x) < +\infty\}$.}
{
\begin{definition}[{\cite[Definition 4.4.4]{cui2020modern}}]\label{definition:icc}
	A function $\theta:\mathbb R^n \rightarrow \R \cup \{+\infty\}$ with $\operatorname{dom} \theta$ being a convex set is said to be implicitly convex-concave if there exists a function $\overline\theta: \mathbb R^n \times \mathbb R^n \rightarrow \R \cup \{\pm\infty\}$ satisfying:\\[0.05in]
	(a) $\overline\theta(x,z) = +\infty$ if $x \notin \operatorname{dom}\theta, z\in\mathbb R^n$, and $\overline\theta(x,z) = -\infty$ if $x\in \operatorname{dom}\theta, z\notin \operatorname{dom}\theta$;\\[0.05in]
	(b) $\overline\theta(\bullet,z)$ is convex for any fixed $z \in \operatorname{dom}\theta$;\\[0.05in]
	(c) $\overline\theta(x,\bullet)$ is concave for any fixed $x\in \operatorname{dom}\theta$;\\[0.05in]
	(d) $\theta(x) = \overline\theta(x,x)$ for any $x \in \operatorname{dom}\theta$.
\end{definition}}
The above concept is firstly introduced in  \cite{liu2020two} to analyze the convergence property of a difference-of-convex algorithm to solve two-stage convex bi-parametric quadratic SPs. More properties of icc functions are studied in the recent monograph \cite{cui2020modern}. In fact, the term ``icc'' suggests that this class of functions is a generalization of the difference-of-convex (dc) functions, as the latter is ``explicitly convex-concave'', i.e., for any dc function $\theta(x) = \theta_1(x) - \theta_2(x)$ with both $\theta_1$ and $\theta_2$ convex, one can always associate it with the bivariate function $\overline{\theta}(x,y) = \theta_1(x) - \theta_2(y)$ to explicitly expose the convexity-concavity of $\theta$ in the lifted pair $(x,y)$.
Back to the recourse function $\psi$ \eqref{eq:value_function}, we consider its lifted bivariate counterpart
\begin{equation}\label{eq:lifting}
    \overline\psi(x, z)
    \, \triangleq 
\left\{\begin{array}{ll}
 \displaystyle\operatornamewithlimits{minimum}_{y}  \left\{\, f(z,y) \mid G(x,y)\leq 0 \,\right\} & \mbox{if $x, z\in {\overline X}$} \\[0.08in]
    +\infty & \mbox{if $x\notin {\overline X}$}\\[0.08in]
    -\infty & \mbox{if $x\in {\overline X}$ and $z \notin {\overline X}$}
    \end{array}\right..
\end{equation}
 If the minimization problem of $y$ in \eqref{eq:lifting} has a nonempty solution set for any $(x,z) \in {\overline X \times \overline X}$, it is not difficult to see that the assumption in Lemma \ref{lemma:icc} holds. Henceforth, the following result is a direct consequence of Lemma \ref{lemma:icc}. No proof is needed.

\begin{proposition}\label{prop:icc}
{Assume that for any $(x,z) \in {\overline X \times \overline X}$, the minimization problem of $y$ in \eqref{eq:lifting} has a nonempty solution set.} Then $\psi$ in \eqref{eq:value_function} is an implicitly convex-concave function associated with the lifted function $\overline\psi$ in \eqref{eq:lifting}.
\end{proposition}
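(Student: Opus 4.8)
The plan is to verify directly that the pair $(\psi, \overline\psi)$ from \eqref{eq:value_function} and \eqref{eq:lifting} satisfies the four defining conditions of Definition \ref{definition:icc}, taking $\theta = \psi$ and $\overline\theta = \overline\psi$. The preliminary observation is that $\operatorname{dom}\psi = \overline X$: the standing assumption that the $y$-minimization in \eqref{eq:value_function} admits an optimal solution for every $x \in \overline X$ guarantees $\psi$ is finite on $\overline X$, while $\psi \equiv +\infty$ off $\overline X$ by construction. Since $\overline X$ is convex and compact by hypothesis, $\operatorname{dom}\psi$ is convex, as Definition \ref{definition:icc} requires.

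Next I would match each condition to the piecewise definition of $\overline\psi$. Condition (a) is immediate, since the two infinite branches in \eqref{eq:lifting} are precisely the $+\infty$ value when $x \notin \overline X = \operatorname{dom}\psi$ and the $-\infty$ value when $x \in \overline X$ but $z \notin \overline X$. Condition (d) is equally direct: setting $z = x \in \overline X$ in \eqref{eq:lifting} recovers the defining minimization of $\psi(x)$ in \eqref{eq:value_function}, so $\overline\psi(x,x) = \psi(x)$.

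The crux is conditions (b) and (c), where Lemma \ref{lemma:icc} carries the weight. For fixed $z = \bar x \in \overline X$, the map $x \mapsto \overline\psi(x, \bar x)$ coincides on $\overline X$ with $\psi_{\rm cvx}$ (the objective uses the frozen $\bar x$ while the constraint carries the free variable), so the lemma yields its convexity on $\overline X$; symmetrically, for fixed $x = \bar x \in \overline X$, the map $z \mapsto \overline\psi(\bar x, z)$ agrees on $\overline X$ with $\psi_{\rm cve}$, which the lemma shows is concave. Before invoking the lemma I must check that its hypothesis transfers: its requirement that $\psi_{\rm cvx}$ and $\psi_{\rm cve}$ have nonempty solution sets for every $x \in \overline X$ follows from the proposition's assumption, since $\psi_{\rm cvx}(x) = \overline\psi(x, \bar x)$ and $\psi_{\rm cve}(x) = \overline\psi(\bar x, x)$ with both argument pairs lying in $\overline X \times \overline X$.

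The only genuine subtlety, and the step I would flag as the main obstacle, is the passage from the convexity/concavity statements of Lemma \ref{lemma:icc}, which are asserted only on $\overline X$, to the global convexity/concavity of the extended-real-valued functions $\overline\psi(\bullet, z)$ and $\overline\psi(x, \bullet)$ demanded by Definition \ref{definition:icc}. This is settled by the standard fact that extending a convex (resp.\ concave) function on a convex set by $+\infty$ (resp.\ $-\infty$) outside preserves convexity (resp.\ concavity): because $\overline X$ is convex, the convexity inequality holds trivially whenever an argument leaves $\overline X$ (the right-hand side being $+\infty$) and reduces to the on-$\overline X$ statement otherwise, and dually for the concave case with $-\infty$. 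This is exactly why the infinite branches in \eqref{eq:lifting} were assigned as they are; everything else is routine bookkeeping of the definitions.
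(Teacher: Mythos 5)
Your proof is correct and follows exactly the route the paper intends: the paper states the proposition as a direct consequence of Lemma \ref{lemma:icc} and omits the proof, while you simply carry out that consequence---identifying $\operatorname{dom}\psi=\overline X$, checking conditions (a) and (d) of Definition \ref{definition:icc} from the piecewise definition of $\overline\psi$, invoking Lemma \ref{lemma:icc} for (b) and (c), and noting that the $\pm\infty$ extensions outside the convex set $\overline X$ preserve convexity and concavity. The details you flag (transferring the solvability hypothesis and the extension step) are exactly the routine bookkeeping the paper leaves implicit, so there is no substantive divergence.
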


\begin{figure}
	\centering
    \includegraphics[scale=0.4]{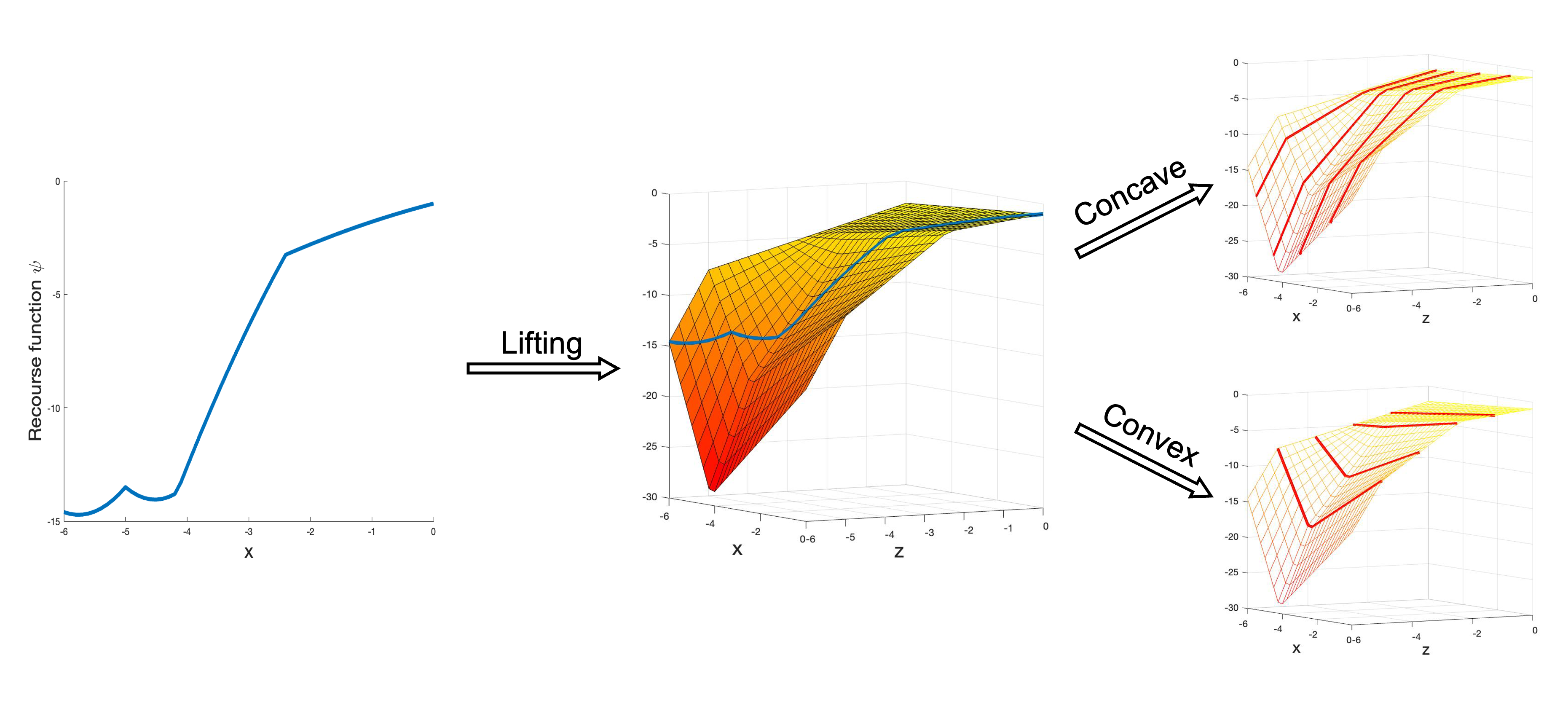}
    \centering
    \caption{\rm \scriptsize An illustration of the implicitly convex-concave structure of the nonconvex recourse function. 
    {\bf Left}: the original 1-dimensional recourse function that is neither convex nor concave; 
    {\bf Middle}: the lifted bivariate counterpart in $\mathbb{R}^2$ that is convex in $x$ and concave in $z$; {\bf Right}: the exposure of the concave component in the $z$ coordinate and  the convex component  in the $x$ coordinate.}
    \label{fig:lifting}
\end{figure}

One shall see from the subsequent sections that the derived implicitly convex-concave property of the recourse function is critical  to study the  two-stage SP \eqref{eq:first_stage}. On one hand, we can leverage this structure to construct an approximation of the nonconvex recourse function $\psi$ at any given $x$; on the other hand,  
it enables us to define  a stationary point  of \eqref{eq:first_stage}  that is  provably computable by our later designed algorithms. To fulfill these tasks, we first derive  a superset of the Clarke subdifferential of $\psi$.
To proceed, we denote $\partial_1 \overline\psi(x,z)$ as the subdifferential of the convex function $\overline\psi(\bullet,z)$ at $x$ for any $z\in {\overline X}$,  and $\partial_2 (-\overline\psi)(x,z)$ as the subdifferential of the convex function $(-\overline\psi)(x, \bullet)$ at $z$ for any $x\in {\overline X}$. We also write $\overline{Y}(x,z)$ as the set of all optimal solutions of problem \eqref{eq:lifting}.
\begin{lemma}\label{lem:subdiff}
    {Assume that for any $(x,z) \in {\overline X \times \overline X}$, the minimization problem of $y$ in  \eqref{eq:lifting} has a nonempty solution set. The following two statements hold for all $(x,z) \in {\overline X \times \overline X}$.\\[0.05in]
    (a) $\Big\{ \partial_1 (-f)(z, y) \mid y\in \overline Y(x,z) \Big\} \subseteq \partial_2(-\overline\psi)(x,z)$;}\\[0.05in]
    (b) $\partial_C \psi (x)\,\subseteq\, \partial_1 \;\overline\psi (x, x) - \partial_2 (-\overline\psi) (x, x)$.
\end{lemma}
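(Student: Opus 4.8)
The two parts can be treated independently: part (a) is a Danskin-type estimate for the subdifferential of a pointwise supremum, while part (b) is the implicitly-convex-concave analogue of the classical difference-of-convex inclusion $\partial_C(\theta_1-\theta_2)\subseteq\partial\theta_1-\partial\theta_2$. For part (a), I would fix $x\in\overline X$ and rewrite the convex function $(-\overline\psi)(x,\bullet)$ as the pointwise maximum $(-\overline\psi)(x,z)=\max_{y\,:\,G(x,y)\le 0}(-f)(z,y)$, which is legitimate since $-\min=\max(-\,\bullet\,)$ and the inner problem attains its optimum by assumption. The solution set $\overline Y(x,z)$ of \eqref{eq:lifting} is exactly the set of maximizers of $(-f)(z,\bullet)$ over the feasible set, so each $y\in\overline Y(x,z)$ satisfies $(-f)(z,y)=(-\overline\psi)(x,z)$. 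Then for any $v\in\partial_1(-f)(z,y)$ and any $z'\in\overline X$, feasibility of $y$ gives $(-\overline\psi)(x,z')\ge(-f)(z',y)$, and the subgradient inequality for the convex function $(-f)(\bullet,y)$ gives $(-f)(z',y)\ge(-f)(z,y)+\langle v,z'-z\rangle=(-\overline\psi)(x,z)+\langle v,z'-z\rangle$. Chaining these inequalities yields $v\in\partial_2(-\overline\psi)(x,z)$, which proves (a).

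For part (b), I would fix $x\in X\subseteq\operatorname{int}(\overline X)$, so that $\overline\psi$ is a finite saddle function on a neighborhood of $(x,x)$ and $\psi$, being the composition of $\overline\psi$ with the Lipschitz diagonal map $x\mapsto(x,x)$, is locally Lipschitz near $x$; hence $\partial_C\psi(x)$ is the nonempty compact convex set whose support function is $\psi^\circ(x;\bullet)$. Since $S\triangleq\partial_1\overline\psi(x,x)-\partial_2(-\overline\psi)(x,x)$ is compact convex, it suffices to show $\psi^\circ(x;d)\le\sigma_S(d)$ for every direction $d$, where $\sigma_S$ denotes the support function. To this end I would take sequences $x^k\to x$ and $t_k\downarrow 0$ realizing the $\limsup$ defining $\psi^\circ(x;d)$ and split the increment along the diagonal as
\[
\overline\psi(x^k+t_kd,\,x^k+t_kd)-\overline\psi(x^k,x^k)=\big[\,\overline\psi(x^k+t_kd,\,x^k+t_kd)-\overline\psi(x^k,\,x^k+t_kd)\,\big]+\big[\,\overline\psi(x^k,\,x^k+t_kd)-\overline\psi(x^k,x^k)\,\big].
\]
The first bracket varies only the convex first argument, so choosing $u_k\in\partial_1\overline\psi(x^k+t_kd,\,x^k+t_kd)$ and applying the convex subgradient inequality at the right endpoint bounds it above by $t_k\langle u_k,d\rangle$; the second bracket varies only the second argument, where $-\overline\psi(x^k,\bullet)$ is convex, so choosing $w_k\in\partial_2(-\overline\psi)(x^k,x^k)$ and applying the subgradient inequality at the left endpoint bounds it above by $-t_k\langle w_k,d\rangle$. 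Dividing by $t_k$ gives $[\psi(x^k+t_kd)-\psi(x^k)]/t_k\le\langle u_k-w_k,d\rangle$.

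It then remains to pass to the limit. Here I would invoke that on the interior the partial subdifferential maps $(x,z)\mapsto\partial_1\overline\psi(x,z)$ and $(x,z)\mapsto\partial_2(-\overline\psi)(x,z)$ are locally bounded and outer semicontinuous: local boundedness follows from the joint local Lipschitz continuity of the finite saddle function $\overline\psi$, and outer semicontinuity follows by passing to the limit in the convex subgradient inequalities using the joint continuity of $\overline\psi$. Local boundedness permits extracting convergent subsequences $u_k\to u$ and $w_k\to w$; since $(x^k+t_kd,\,x^k+t_kd)\to(x,x)$ and $(x^k,x^k)\to(x,x)$, outer semicontinuity places $u\in\partial_1\overline\psi(x,x)$ and $w\in\partial_2(-\overline\psi)(x,x)$. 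Therefore $\psi^\circ(x;d)\le\langle u-w,d\rangle\le\sigma_S(d)$, and since $d$ was arbitrary the inclusion $\partial_C\psi(x)\subseteq S$ follows from the support-function characterization of $\partial_C\psi(x)$ recorded in Section \ref{sec:pre}.

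The main obstacle I anticipate is precisely this last passage to the limit, namely verifying that the two partial subdifferential mappings are locally bounded and outer semicontinuous at $(x,x)$. This rests on the joint local Lipschitz continuity (and hence joint continuity) of the finite concave-convex function $\overline\psi$ on $\operatorname{int}(\overline X\times\overline X)$, a standard but nontrivial regularity property of saddle functions. The hypothesis $X\subseteq\operatorname{int}(\overline X)$ is what keeps the relevant points in the interior where this regularity is available, and it is also what makes $\partial_C\psi(x)$ well defined in the first place; the separate-convexity bounds on the two brackets are then routine.
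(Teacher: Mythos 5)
Your part (a) is essentially the paper's own proof: the paper likewise fixes $y\in\overline Y(x,z)$ and $c\in\partial_1(-f)(z,y)$, chains the feasibility bound $\overline\psi(x,z^\prime)\le f(z^\prime,y)$ with the concavity of $f(\bullet,y)$, and concludes via \cite[Theorem~23.5]{rockafellar1970convex}; the one case you leave implicit, $z^\prime\notin\overline X$, is vacuous because $(-\overline\psi)(x,z^\prime)=+\infty$ there, which the paper notes explicitly. For part (b), however, you take a genuinely different route: the paper offers no argument at all, deferring entirely to \cite[Proposition~4.4.26(c)]{cui2020modern}, whereas you reconstruct a direct proof by bounding the Clarke directional derivative. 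Your mechanics are sound --- the diagonal split of $\overline\psi(x^k+t_kd,\,x^k+t_kd)-\overline\psi(x^k,x^k)$ into a convex first-argument increment and a concave second-argument increment, the two one-sided subgradient bounds giving the difference quotient at most $\langle u_k-w_k,d\rangle$, and the support-function conclusion are all correct --- and the regularity you flag as the main obstacle is genuinely available: a finite saddle function is locally Lipschitz on an open product set (\cite[Theorem~35.1]{rockafellar1970convex}), which gives local boundedness of the partial subdifferential maps, and the outer semicontinuity you obtain by passing to the limit in the subgradient inequalities is exactly what the paper itself establishes later in Lemma~\ref{lem:subdifferential}(a), there via upper semicontinuity of the partial directional derivatives. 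What your argument buys is self-containedness: it exposes precisely which saddle-function regularity the citation hides, and it proves en route the osc/local-boundedness facts the paper needs anyway for Theorem~\ref{thm:consistency1}; what the citation buys is brevity. One caveat: you prove (b) only for $x\in X\subseteq\operatorname{int}(\overline X)$, while the lemma is nominally stated for all $(x,z)\in\overline X\times\overline X$. Since $\psi\equiv+\infty$ off $\overline X$, the Clarke subdifferential as defined in Section~\ref{sec:pre} (for locally Lipschitz functions) is meaningful only at interior points, and the lemma is only ever invoked at points of $X$, so your restriction is harmless --- but at boundary points of $\overline X$ assertion (b) has to be read under the monograph's extended definition rather than under your argument.
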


\begin{proof}
(a) {For any $(x,z) \in {\overline X \times \overline X}$, we take any $y \in \overline Y(x,z)$ and any $c \in \partial_1 (-f)(z, y)$ to obtain
\[
    \overline\psi(x, z^\prime) \leq f(z^\prime, y)
    \leq f(z,y)+ (- c)^\top (z^\prime - z) 
    = \overline\psi(x,z) + c^\top (z - z^\prime), \quad\forall\, z^\prime \in \mathbb R^n,
\]
where the first inequality holds because $\overline{\psi}(x,z^\prime) = -\infty$ if $z^\prime \notin {\overline X}$ and $y\in\overline Y(x,z)$ must be feasible to the constraint $G(x,y) \leq 0$ if $z^\prime\in {\overline X}$; and the second inequality is due to the concavity of $f(\bullet,y)$. By applying \cite[Theorem 23.5]{rockafellar1970convex}, we have $c \in \partial_2(-\overline\psi)(x,z)$ and part (a) is proved.} Part (b) is a consequence of \cite[Proposition 4.4.26 (c)]{cui2020modern} on the relationship between the subdifferentials of an implicitly convex-concave function and its lifted counterpart.
\end{proof}

{
Part (a) of the above lemma can viewed as a weaker version of Danskin's theorem \cite{danskin2012theory, bertsekas1971control,clarke1975generalized}. Instead of a complete characterization of the subdifferential of an optimal value function in the aforementioned papers, we only need to obtain one element from this subdifferential to design our algorithms later. Therefore, only a one-sided inclusion as in part (a) is needed, which holds without the compactness of the feasible set in terms of $y$.
}

\subsection{The partial Moreau envelope}
Equipped with the lifted function $\overline\psi$, one may be able to construct  computationally friendly surrogations of the recourse function in \eqref{eq:value_function} via a modification of the usual Moreau envelope. Let us first recall the definition of the classical Moreau envelope. {An extended-real-valued function $f$ is said to be proper if $f(x) < +\infty$ for some $x\in \mathbb{R}^n$, and $f(x) > -\infty$ for all $x\in \mathbb{R}^n$.} Given a proper lower
 semicontinuous function $\theta:\mathbb{R}^n\to \R\cup\{+\infty\}$  and a positive scalar $\gamma$, its {\sl Moreau envelope}  is 
\[
    e_\gamma^{\rm ori} \,\theta(x) \, \triangleq \,  \inf\limits_{z\in \mathbb{R}^n} \left\{ \theta(z) + \frac 1 {2\gamma} \|x-z\|^2\right\}, \quad x\in \mathbb{R}^n. 
\]
We use the superscript ``ori'' to emphasize that this is the original definition of  the Moreau envelope and is different from our later modification.
The function $\theta$ is said to be prox-bounded if there exists $\gamma>0$ such that $e_\gamma^{\rm ori}\, \theta(x) > -\infty$ for some $x\in \mathbb{R}^n$. It is known that a convex function is always prox-bounded and its Moreau-envelope is continuously differentiable (c.f.\ \cite[Theorem 2.26]{rockafellar2009variational}). In general, for any prox-bounded function $\theta$, the parametric functions
$e_\gamma^{\rm ori} \,\theta(x) \uparrow \theta(x)$ as $\gamma\downarrow 0$ for all $x\in \mathbb{R}^n$. 
 Therefore, one can view the Moreau envelope as a lower approximation of the original function.
However,  if $\theta$ is nonconvex and nonsmooth, the function $e_\gamma^{\rm ori} \,\theta$ may be neither convex nor smooth. Nevertheless, for any $x\in \mathbb{R}^n$, it holds that (see, e.g., \cite{asplund1973differentiability,lucet2006fast})
\begin{equation}\label{eq:dc of moreau}
    e_\gamma^{\rm ori}\, \theta(x) \, = \frac{1}{2\gamma}\|x\|^2 -  \underbrace{\sup_{z\in \mathbb{R}^n} \left\{ -\theta(z)  - \frac{1}{2\gamma}\|z\|^2 + \frac{1}{\gamma}z^\top x \right\}}_{\text{convex in $x$ even if $\theta$ is nonconvex}},
\end{equation}
which indicates that one can always obtain a dc decomposition of $e_\gamma^{\rm ori} \,\theta$ no matter $\theta$ is convex or not. The only trouble   brought by the nonconvexity of $\theta$ is that the inner sup problem for $y$ may not be concave (especially if $\theta$ is not weakly convex), thus one may not be able to evaluate the subgradient of the second term at a given $x$ when using the dc algorithm to  minimize the function $e_\gamma^{\rm ori} \,\theta$. Specifically, in the context of the recourse function  \eqref{eq:value_function}, its associated Moreau envelope is
\[
\begin{array}{rl}
   e_\gamma^{\rm ori}\, \psi (x)
   \; = & \displaystyle\operatornamewithlimits{inf}_{z\in \mathbb{R}^n} \;\, \left\{\psi(z) + \frac 1 {2\gamma} \;\|x-z\|^2 \right\}\\[0.1in]
    = &\displaystyle\frac{1}{2\gamma}\|x\|^2 - 
    {
    \displaystyle\sup_{z\in {\overline X},y} \left\{ -f(z,y) - \displaystyle\frac{1}{2\gamma}\|z\|^2 + \frac{1}{\gamma}z^\top y \,\middle|\, G(z,y)\leq 0 \right\}
    },
\end{array}
\]
where the inner max problem is not jointly concave in $(z,y)$ since $f$ is not assumed to be jointly convex. This issue motivates us to introduce the following new type of envelopes tailored to icc functions that is more computationally tractable: 
\begin{equation}\label{eq:moreau}
    e_\gamma \theta (z)
    \triangleq \; \displaystyle\operatornamewithlimits{inf}_{x\in \mathbb{R}^n} \;\, \left\{\,\overline\theta(x,z) + \frac 1 {2\gamma} \;\|x-z\|^2 \,\right\}
\end{equation}
where $\theta:\mathbb{R}^n\to \R\cup\{+\infty\}$ is any icc function and $\overline\theta:\mathbb R^n \times \mathbb R^n \rightarrow \mathbb R \cup \{\pm\infty\}$ is its lifted counterpart  as in Definition \ref{definition:icc}. When $\overline{\theta}(x,z)$ is independent of $z$ (so that this function only has the convex part), the above definition reduces to the usual Moreau envelope. Hence, we term the new regularization of $\theta$ in \eqref{eq:moreau} its {\sl partial Moreau envelope}. Similarly as in \eqref{eq:dc of moreau}, the newly defined partial Moreau has the following explicit dc decomposition
\begin{equation}\label{eq: decompose partial}
   e_\gamma \theta (z) \,= \underbrace{\frac 1 {2\gamma} \|z\|^2}_{\text{strongly convex}} - \underbrace{\operatornamewithlimits{sup}_{x\in \mathbb{R}^n}\left\{ -\overline\theta(x, z) - \frac 1 {2\gamma}\|x\|^2 + \frac 1 \gamma z^\top x \right\}}_{\text{denoted as } g_\gamma(z),\; \text{convex}}.
\end{equation}
We denote the optimal solution mapping of the minimization problem in \eqref{eq:moreau} as 
\[
    P_{\gamma} \theta(z) \, \triangleq \, \underset{x\in \mathbb{R}^n}{\operatorname{argmin}}\left\{ \overline\theta(x,z) +\frac 1 {2\gamma} \|x-z\|^2\right\}, \quad z\in\mathbb R^n.
\]
{For any $z\in\mathbb R^n$, it holds that $\emptyset \neq P_{\gamma}\theta(z) \subseteq \operatorname{dom}\theta$}. {When $z\in\operatorname{dom}\theta$, the mapping is single-valued since the inner objective function is strongly convex in $x$; for this case, we follow the terminology in the literature to call $P_{\gamma} \theta(z)$ the proximal point of $\overline\theta$ at $z$.} 
Similar to the classical Moreau envelope, the partial Moreau envelope  approximates the original function from below. The following lemma provides the gap between the partial Moreau envelope and the original function under the 
Lipschitz continuity of $\overline\theta(\bullet,z)$. The proof is adapted from {\cite[Proposition 3.4]{planiden2019proximal}} on a similar property regarding the Moreau envelope.
\begin{lemma}\label{lem:gap_bound}
Consider an icc function $\theta: \mathbb{R}^n \to \mathbb{R}\cup\{+\infty\}$ and its lifted counterpart $\overline\theta :\mathbb{R}^n\times \mathbb{R}^n\to \mathbb{R}\cup\{\pm\infty\}$. Let $X \subseteq \operatorname{dom} \theta$. Assume that $\overline\theta(\bullet,z)$ is Lipschitz continuous {on $\operatorname{dom} \theta$} with Lipschitz constant $\kappa(z)$ for every $z\in X$, i.e.,
	\[
		\left|\overline\theta(x_1,z)-\overline\theta(x_2,z)\right| \leq \kappa(z) \, \|x_1-x_2\|, \quad \forall \, x_1, x_2\in {\operatorname{dom} \theta},\; z \in X,
	\]
then $0\leq \theta(z) - e_{\gamma}\theta(z) \;\leq\; \gamma\kappa(z)^2/2$ for any $z\in X$.
\end{lemma}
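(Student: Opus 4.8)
The plan is to treat the two inequalities separately, both of which reduce to elementary estimates once the defining identity $\theta(z) = \overline\theta(z,z)$ from Definition \ref{definition:icc}(d) is invoked (valid here since $z \in X \subseteq \operatorname{dom}\theta$).

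For the lower bound $e_\gamma\theta(z) \leq \theta(z)$, I would simply use $x = z$ as a feasible point in the infimum defining $e_\gamma\theta(z)$ in \eqref{eq:moreau}. Since the quadratic term $\frac{1}{2\gamma}\|x - z\|^2$ vanishes at $x = z$, this immediately gives $e_\gamma\theta(z) \leq \overline\theta(z,z) + 0 = \theta(z)$, which is $0 \leq \theta(z) - e_\gamma\theta(z)$.

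For the upper bound, the strategy is to dominate $\theta(z) = \overline\theta(z,z)$ by $\overline\theta(x,z) + \frac{1}{2\gamma}\|x - z\|^2$ plus a fixed constant, uniformly in $x$, and then pass to the infimum. Only $x \in \operatorname{dom}\theta$ are relevant: by Definition \ref{definition:icc}(a), $\overline\theta(x,z) = +\infty$ whenever $x \notin \operatorname{dom}\theta$ (as $z \in \operatorname{dom}\theta$), so such $x$ make the right-hand side infinite and may be discarded. For $x \in \operatorname{dom}\theta$, the Lipschitz hypothesis yields $\overline\theta(z,z) - \overline\theta(x,z) \leq \kappa(z)\,\|z - x\|$. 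Writing $t = \|x - z\|$, I would then apply the completing-the-square (Young-type) estimate $\kappa(z)\,t \leq \frac{1}{2\gamma}t^2 + \frac{\gamma\kappa(z)^2}{2}$, which is just a rearrangement of $\frac{1}{2\gamma}(t - \gamma\kappa(z))^2 \geq 0$. Combining the two bounds gives $\overline\theta(z,z) \leq \overline\theta(x,z) + \frac{1}{2\gamma}\|x - z\|^2 + \frac{\gamma\kappa(z)^2}{2}$ for every $x \in \mathbb{R}^n$.

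Finally, subtracting the constant $\frac{\gamma\kappa(z)^2}{2}$ and taking the infimum over $x$ on the right produces $\theta(z) - \frac{\gamma\kappa(z)^2}{2} \leq e_\gamma\theta(z)$, which is exactly the claimed upper bound. I do not anticipate a serious obstacle; the only points demanding care are the bookkeeping with the extended-real-valued conventions for $\overline\theta$ (ensuring the infimum is effectively restricted to $\operatorname{dom}\theta$, where the Lipschitz inequality actually applies) and the fact that $\kappa(z)$ depends on $z$, so the resulting estimate is pointwise in $z \in X$ rather than uniform.
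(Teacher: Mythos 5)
Your proof is correct and follows essentially the same route as the paper's: both rest on the Lipschitz estimate $\overline\theta(z,z)-\overline\theta(x,z)\leq \kappa(z)\,\|x-z\|$ combined with the same elementary algebra (your Young-type inequality $\kappa(z)\,t \leq t^2/(2\gamma)+\gamma\kappa(z)^2/2$ is precisely the paper's use of $\max_{t\geq 0}\left[\,\kappa(z)\,t - t^2/(2\gamma)\,\right]=\gamma\kappa(z)^2/2$). The only difference is minor but slightly in your favor: the paper evaluates the gap at the proximal point $P_\gamma\theta(z)$, which presumes the infimum in \eqref{eq:moreau} is attained, whereas your uniform-in-$x$ bound followed by passing to the infimum requires no attainment and makes the extended-real-valued bookkeeping explicit.
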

\begin{proof}
For any $z\in X$, it holds that
    \begin{align*}
    0\leq\,\theta(z) - e_{\gamma}\theta(z)
    &=\,\overline\theta(z,z) - \overline\theta\left(P_{\gamma}\theta(z),z\right) - \left\| P_{\gamma}\theta(z) - z \right\|^2/(2\gamma)\\
    &\leq\,\kappa(z) \left\|P_{\gamma}\theta(z) - z\right\| - \left\| P_{\gamma}\theta(z) - z \right\|^2/(2\gamma)
    \,\leq\,\gamma \kappa(z)^2/2,
    \end{align*}
    where the second inequality follows from the Lipschitz continuity of $\overline\theta(\bullet,z)$ and the last inequality uses the fact that $\max\limits_{t\ge 0}\left[\, \kappa(z) \,t- t^2/{(2\gamma)} \,\right] = \gamma \kappa(z)^2/2$.
\end{proof}
With $g_\gamma$ defined in \eqref{eq: decompose partial}, {it follows from similar arguments in the proof of Lemma \ref{lem:subdiff} that for any $z\in \operatorname{dom}\theta$, 
\[
   \frac 1 \gamma P_{\gamma} \theta(z)+\partial_2(-\overline\theta)(P_{\gamma} \theta(z),z) \subseteq \partial g_\gamma(z).
\]}
One can then obtain the following convex majorization of $e_\gamma\theta(z)$ at any given point $z^\prime \in \operatorname{dom}\theta$ based on the subgradient inequality of the convex function $g_\gamma$:
\begin{equation}\label{eq:upper bound general}
   e_\gamma\theta(z)\, \leq \, \widehat{e}_{\gamma}\theta(z;z^\prime)
    \triangleq \frac 1 {2\gamma} \| z\|^2 -g_\gamma(z^\prime) -\big( P_{\gamma }\theta(z^\prime)/\gamma + c\big)^\top (z - z^\prime),\;\, \forall \;z\in \operatorname{dom}\theta,
\end{equation}
where $c\in \partial_2(-\overline\theta)\left(P_{\gamma }\theta(z^\prime),z^\prime\right)$.
\section{The decomposition algorithm and its convergence}
\label{sec:algorithm} 
Based on the discussion in the last section, we are now ready to present the  decomposition algorithm for solving the nonconvex two-stage SP \eqref{eq:first_stage} and analyze its convergence.  In this section, we focus on the case where there are fixed  scenarios $\{\xi^1,\ldots,\xi^S\}$,  each realized with  probability $1/S$. The
problem \eqref{eq:first_stage} then reduces to
\begin{equation}\label{eq:finite_scenarios}
\begin{array}{rl}
   \displaystyle\operatornamewithlimits{minimize}_{x\in X\subseteq\R^{n_1}} \;\, \left\{\varphi(x)+\frac{1}{S}\sum\limits_{s=1}^S \psi(x; \xi^s)\right\},
\end{array}
\end{equation}
where each $\psi(x;\xi^s)$ is given by \eqref{eq:second_stage}. The above problem can  be viewed as a sample average approximation of the two-stage SP \eqref{eq:first_stage}
under a prescribed sample size $S$. All the discussions in this section can be easily adapted to the case where the distribution of $\tilde\xi$ has finite support (but unequal probability mass for different $\xi^s$).
We will work on the internal sampling scheme for continuously distributed $\tilde\xi$ in the next section.

\subsection{The algorithmic framework}
Our goal is to 
solve the nonconvex problem \eqref{eq:finite_scenarios} via a  successive approximation scheme. 
For any $\xi\in \Xi$ {and $z \in X$}, the partial Moreau envelope of the recourse function \eqref{eq:second_stage} associated with the bivariate function  \eqref{eq:lifting} is
\begin{equation}\label{eq:subprob}
    \begin{split}
        e_{\gamma}\psi(z;\xi)\,
        & \triangleq \; \operatornamewithlimits{minimum}_{x\in {\mathbb{R}}^{n_1}} \;\, \left\{\,\overline\psi(x, z;\xi) + \frac 1 {2\gamma} \|x-z\|^2\, \right\}\\
        &=
        \left[
        \begin{array}{ll}
		\displaystyle\operatornamewithlimits{minimum}_{x\in {\overline X}, \,  y\in \mathbb{R}^{n_2}} &\;\, f(z, y; \xi) + \displaystyle\frac 1 {2\gamma} \|x - z\|^2\\[0.1in]
        \mbox{subject to} &\;\,  G(x,y;\xi) \leq 0
        \end{array}
        \right].
    \end{split}
\end{equation}
We consider a double-loop algorithm where the outer loop updates the parameter $\gamma$ in the partial Moreau envelope and the inner loop solves the nonconvex problem $\displaystyle\operatornamewithlimits{minimize}_{x\in X}\, \left[\, \varphi(x) + \frac{1}{S} \sum_{s=1}^S e_{\gamma_\nu}\psi(x;\xi^s)\,\right]$ to stationarity for a fixed $\gamma_\nu$.
To solve the latter nonconvex problem during the $\nu$-th inner loop, we replace the Moreau-regularized recourse function $e_{\gamma_\nu}\psi(x;\xi)$ with its upper approximation constructed at the latest iterate $x_{\nu,i}$, where $i$ denotes the inner iterate index. For each $s$, let $\left(\,x^s_{\nu,i}, \, y^s_{\nu,i}\,\right)$ be one of the optimal solutions of \eqref{eq:subprob} at $(z,\xi) = (x_{\nu,i}, \xi^s)$, which can be computed by solving $s$ separable convex optimization problems. Notice that $x_{\nu,i}^s = P_{\gamma_\nu} \psi(x_{\nu,i};\xi^s) {\in \overline X}$.
One may then derive from  \eqref{eq:upper bound general} the following
upper approximating function of $e_{\gamma_\nu}\psi(x;\xi^s)$:
\begin{equation}\label{eq:Moreau_surrogate}
   \widehat{e}_{\gamma_{\nu}}\psi(x; \xi^{s}; x_{\nu,i})\triangleq \frac 1 {2\gamma_{\nu}} \|x\|^2 - g_{\gamma_{\nu}}(x_{\nu,i}; \xi^{s}) - \big( x^s_{\nu,i}/{\gamma_{\nu}} + c^s_{\nu,i}\big)^\top (x-x_{\nu,i}),
\end{equation}
where $
    g_{\gamma_{\nu}}(x_{\nu,i}; \xi^{s}) \triangleq  \displaystyle\frac 1 {2\gamma_{\nu}} \|x_{\nu,i}\|^2 - e_{\gamma_{\nu}}\psi(x_{\nu,i};\xi^{s})
$ and $c^s_{\nu,i} \in  \partial_2\left(-\overline\psi\,\right)\left(x^s_{\nu,i}, x_{\nu,i}; \xi^{s}\right)$. Due to Lemma \ref{lem:subdiff}, a particular way to choose $c^s_{\nu,i}$ is to take an element from $\partial_1 (-f)\left(x^s_{\nu,i}, y^s_{\nu,i}; \xi^{s}\right)$.
The resulting master problem to generate the next first-stage iterate $x_{\nu,i+1}$ is
\begin{equation}\label{eq:master}
    \operatornamewithlimits{minimize}_{x\in X}\;\,
    \left\{
    \varphi(x)+
    \frac{1}{S} \sum\limits_{s=1}^{S} \widehat{e}_{\gamma_{\nu}}\psi(x; \xi^{s}; x_{\nu,i})
    \right\}.
\end{equation}
The inner iteration continues until the distance between two consecutive iterates $x_{\nu,i}$ and $x_{\nu,i+1}$ is sufficiently close.
We summarize the procedure of the decomposition algorithm below. When $S=1$, it reduces to the algorithm  in \cite[Algorithm 7.2.1]{cui2020modern} to minimize an implicitly convex-concave function (without decomposition).

\begin{algorithm}[H]
\caption{A decomposition algorithm for the nonconvex two-stage SP \eqref{eq:finite_scenarios}}
\label{alg:finite_samples}
{\bf Input:} Initial point $x_0\in X$, and two scalar sequences $\{\gamma_{\nu}\}_{{\nu}\geq 0}\downarrow 0$ and $\{\varepsilon_\nu\}_{\nu\geq 0} \downarrow 0$.
\vskip 0.05in
{\bf Outer loop:} Set $\nu=0$.
\begin{algorithmic}[1]
	\State Execute the inner loop with the initial point $x_\nu$, and parameters $\gamma_\nu$ and $\varepsilon_\nu$.
	\State Set $\nu \leftarrow \nu+1$ and repeat  step 1 until a prescribed stopping criterion is satisfied.
\end{algorithmic}

\vskip 0.02in
{\bf Inner loop:} Set $i=0$ and $x_{\nu,0}=x_\nu$.
\begin{algorithmic}[1]
        \State Solve the subproblem \eqref{eq:subprob} at $(z,\xi) = (x_{\nu,i},\xi^s)$ for all $s$ to get solutions $\left(x^s_{\nu,i}, y^s_{\nu,i}\right)$.
        \State Select $c^s_{\nu,i} \in \partial_1 (-f)\left(x^s_{\nu,i}, y^s_{\nu,i}; \xi^{s}\right) \subseteq \partial_2\left(-\overline\psi\,\right)\left(x^s_{\nu,i}, x_{\nu,i}; \xi^{s}\right)$ for each $s$.
        \State Solve the master problem \eqref{eq:master} to obtain  $x_{\nu,i+1}$.
        \State Set $i \leftarrow i+1$ and repeat the above steps if $\|x_{\nu,i+1} - x_{\nu,i}\| > {\varepsilon_\nu}\gamma_\nu$. Otherwise, break the inner loop with $x_{\nu + 1} \triangleq x_{\nu,i+1}$. 
\end{algorithmic}
\end{algorithm}
Notice that each $\widehat{e}_{\gamma_{\nu}}\psi(x; \xi^{s}; x_{\nu,i})$ is a strongly convex quadratic function in $x$. Therefore, the master problem \eqref{eq:master} is a strongly convex optimization problem with $n_1$ number of variables, which is usually easy to solve.

\subsection{Convergence Analysis}
\label{sec:convergence}
This subsection is about the global convergence of the sequence generated by Algorithm \ref{alg:finite_samples}. {We begin with several technical assumptions that will be used in our convergence analysis.}
Since our focus of this section is to solve problem \eqref{eq:finite_scenarios} with fixed scenarios $\{\xi^1,\ldots,\xi^S\}$, one should interpret Assumptions A-B as requirements on all realizations $\{ \xi^1,\ldots,\xi^S \}$. The ``almost surely'' part will be used in the next section where a generally distributed $\tilde\xi$ is considered.

{
\vskip 0.05in
\noindent
\begin{center}
\fbox{%
  \parbox{0.98\textwidth}{
{\bf Assumption A.} The minimization problem of $y$ in defining $\overline\psi(x,z;\xi)$ in \eqref{eq:lifting} has an optimal solution for any {$(x,z) \in \overline X \times \overline X$} (almost surely).\\[0.05in]
{\bf Assumption B.} There exists a measurable function $\kappa_1: \Xi \rightarrow \R_+$ such that the following  condition holds (almost surely):
        \[
            \left| \overline\psi(x_1, z; \xi) - \overline\psi(x_2, z; \xi) \right| \leq  \kappa_1(\xi) \, \| x_1 - x_2 \|,\quad \forall\, x_1, x_2\in {\overline X}, z \in X.
        \]
        \vskip -0.05in
}}
\end{center}

\vskip 0.05in
Some remarks are in order.
Assumption A guarantees that for any $z\in {\overline X}$, the optimal solution of \eqref{eq:subprob} in terms of $y$ exists.
This assumption also implies the relatively complete recourse of the original problem \eqref{eq:first_stage} that $\psi(x;\xi)$ is finite for all feasible $x \in X$ almost surely.}
Assumption B is a stochastic version of the Lipschitz condition in Lemma \ref{lem:gap_bound}. Since $X$ is a compact set, we assume without loss of generality that $\kappa_1(\xi)$ is independent of $z$. 

For convenience, we denote
\begin{equation}\label{eq:zeta def}
	\overline\zeta_{S}(x) \triangleq \varphi(x)+\frac{1}{S}\sum\limits_{s=1}^{S} \psi(x; \xi^s)\quad \mbox{and} \quad \widehat\zeta_{\,S,\gamma_\nu}(x) \triangleq \varphi(x)+\frac{1}{S}\sum\limits_{s=1}^{S} e_{\gamma_\nu}\psi(x; \xi^s).
\end{equation}
In the proposition below, we  show that for any prescribed positive scalar $\varepsilon_\nu$, the $\nu$-th inner loop of Algorithm \ref{alg:finite_samples} terminates in finite steps. 

\begin{proposition}\label{thm:innerconverge1}{\rm (Convergence of the inner loop for Algorithm \ref{alg:finite_samples})}
Suppose that Assumptions A-B hold. Then the following statements hold for any $\nu$-th inner loop.\\
(a) $\widehat\zeta_{\,S,\gamma_\nu}(x_{\nu,i+1})  \leq \widehat\zeta_{\,S,\gamma_\nu}(x_{\nu,i})- \displaystyle\frac 1 {2\gamma_\nu} \|x_{\nu,i} - x_{\nu,i+1}\|^2$ for any $i\geq 0$.\\
(b) $\displaystyle\lim_{i\to 0} \|x_{\nu,i} - x_{\nu, i+1}\| = 0$ and the stopping criterion $\|x_{\nu, i+1} - x_{\nu, i}\| \leq {\varepsilon_\nu} \gamma_\nu$  is achievable in finite steps, i.e., 
\[
    i_{\nu}\triangleq\min\left\{\,i \in \mathbb{Z}_+ \left.\,\middle|\; \|x_{\nu,i+1} - x_{\nu,i}\| \leq {\varepsilon_\nu}  \gamma_\nu \right.\right\} < +\infty.
\]
In addition, we have
    \begin{align*}
        &\operatorname{dist}\left(0,
        {\begin{array}{cc}
            \displaystyle\frac 1 S\sum\limits_{s=1}^{S} \left[\partial_1 \overline\psi (x^s_{\nu,i_{\nu}}, x_{\nu, i_\nu};\xi^{s}) - \partial_2 (-\overline\psi) (x^s_{\nu,i_{\nu}},x_{\nu, i_\nu};\xi^{s})
            {+ \mathcal N_{\overline X}(x^s_{\nu,i_\nu})}\right]\\
            + \; \partial\varphi(x_{\nu, i_\nu+1})+ \mathcal N_X(x_{\nu, i_\nu + 1})
        \end{array}}
        \right) \leq \varepsilon_{\nu}.
    \end{align*}
\end{proposition}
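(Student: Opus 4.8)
The plan is to establish part (a) by a majorization--minimization argument. First I would record the two defining features of the surrogate $\widehat{e}_{\gamma_\nu}\psi(\bullet;\xi^s;x_{\nu,i})$ in \eqref{eq:Moreau_surrogate}: it majorizes the partial Moreau envelope, i.e. $e_{\gamma_\nu}\psi(x;\xi^s)\le \widehat{e}_{\gamma_\nu}\psi(x;\xi^s;x_{\nu,i})$ for all $x$ (this is precisely \eqref{eq:upper bound general} applied to $\psi(\bullet;\xi^s)$), and it is tangent at the reference point, $\widehat{e}_{\gamma_\nu}\psi(x_{\nu,i};\xi^s;x_{\nu,i})=e_{\gamma_\nu}\psi(x_{\nu,i};\xi^s)$, since both equal $\frac{1}{2\gamma_\nu}\|x_{\nu,i}\|^2-g_{\gamma_\nu}(x_{\nu,i};\xi^s)$. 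Averaging over $s$ and adding $\varphi$, the master objective $\widehat{F}_{\nu,i}(x)\triangleq\varphi(x)+\frac1S\sum_s\widehat{e}_{\gamma_\nu}\psi(x;\xi^s;x_{\nu,i})$ then satisfies $\widehat\zeta_{\,S,\gamma_\nu}\le\widehat{F}_{\nu,i}$ everywhere with equality at $x_{\nu,i}$. Because each surrogate carries the quadratic term $\frac{1}{2\gamma_\nu}\|x\|^2$ and $\varphi$ is convex, $\widehat{F}_{\nu,i}$ is $(1/\gamma_\nu)$-strongly convex; as $x_{\nu,i+1}$ is its minimizer over the convex set $X$, the standard second-order growth estimate gives $\widehat{F}_{\nu,i}(x_{\nu,i})\ge\widehat{F}_{\nu,i}(x_{\nu,i+1})+\frac{1}{2\gamma_\nu}\|x_{\nu,i}-x_{\nu,i+1}\|^2$. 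Chaining $\widehat\zeta_{\,S,\gamma_\nu}(x_{\nu,i+1})\le\widehat{F}_{\nu,i}(x_{\nu,i+1})$, this estimate, and the tangency $\widehat{F}_{\nu,i}(x_{\nu,i})=\widehat\zeta_{\,S,\gamma_\nu}(x_{\nu,i})$ yields the inequality in (a).

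For the first claim of part (b), I would telescope (a) to get $\frac{1}{2\gamma_\nu}\sum_{i=0}^{N}\|x_{\nu,i}-x_{\nu,i+1}\|^2\le\widehat\zeta_{\,S,\gamma_\nu}(x_{\nu,0})-\widehat\zeta_{\,S,\gamma_\nu}(x_{\nu,N+1})$. This requires $\widehat\zeta_{\,S,\gamma_\nu}$ to be bounded below on $X$, which I would obtain from compactness of $X$, continuity of $\varphi$, and the lower bound $e_{\gamma_\nu}\psi(\bullet;\xi^s)\ge\psi(\bullet;\xi^s)-\gamma_\nu\kappa_1(\xi^s)^2/2$ furnished by Lemma \ref{lem:gap_bound} under Assumptions A--B. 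Summability of $\|x_{\nu,i}-x_{\nu,i+1}\|^2$ then forces $\|x_{\nu,i}-x_{\nu,i+1}\|\to 0$, so the test $\|x_{\nu,i+1}-x_{\nu,i}\|\le\varepsilon_\nu\gamma_\nu$ is met for some finite index, giving $i_\nu<+\infty$.

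For the stationarity estimate I would denote by $\mathcal{S}$ the set inside $\operatorname{dist}(0,\,\cdot\,)$ in the claim and combine the two convex optimality conditions available at termination. The proximal subproblem \eqref{eq:subprob} at $z=x_{\nu,i_\nu}$ with minimizer $x^s_{\nu,i_\nu}$ gives $\frac{1}{\gamma_\nu}(x_{\nu,i_\nu}-x^s_{\nu,i_\nu})\in\partial_1\overline\psi(x^s_{\nu,i_\nu},x_{\nu,i_\nu};\xi^s)+\mathcal N_{\overline X}(x^s_{\nu,i_\nu})$, while Step~2 supplies $c^s_{\nu,i_\nu}\in\partial_2(-\overline\psi)(x^s_{\nu,i_\nu},x_{\nu,i_\nu};\xi^s)$; subtracting, the vector $v_s\triangleq\frac{1}{\gamma_\nu}(x_{\nu,i_\nu}-x^s_{\nu,i_\nu})-c^s_{\nu,i_\nu}$ lies in $\partial_1\overline\psi-\partial_2(-\overline\psi)+\mathcal N_{\overline X}$ evaluated at $(x^s_{\nu,i_\nu},x_{\nu,i_\nu})$. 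The strongly convex master problem \eqref{eq:master}, using $\nabla_x\widehat{e}_{\gamma_\nu}\psi(x;\xi^s;x_{\nu,i_\nu})=\frac{1}{\gamma_\nu}(x-x^s_{\nu,i_\nu})-c^s_{\nu,i_\nu}$, yields $p+\frac1S\sum_s[\frac{1}{\gamma_\nu}(x_{\nu,i_\nu+1}-x^s_{\nu,i_\nu})-c^s_{\nu,i_\nu}]+w=0$ for some $p\in\partial\varphi(x_{\nu,i_\nu+1})$ and $w\in\mathcal N_X(x_{\nu,i_\nu+1})$. Taking the candidate $d\triangleq\frac1S\sum_s v_s+p+w\in\mathcal{S}$ and subtracting the master identity, the $c^s_{\nu,i_\nu}$, the $x^s_{\nu,i_\nu}$, $p$, and $w$ all cancel, leaving $d=\frac{1}{\gamma_\nu}(x_{\nu,i_\nu}-x_{\nu,i_\nu+1})$; the stopping rule then gives $\operatorname{dist}(0,\mathcal{S})\le\|d\|=\frac{1}{\gamma_\nu}\|x_{\nu,i_\nu}-x_{\nu,i_\nu+1}\|\le\varepsilon_\nu$.

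The MM descent in (a) and the telescoping in (b) are routine; the delicate points are the lower-boundedness needed for summability, which I expect to follow cleanly by pushing Lemma \ref{lem:gap_bound} through Assumptions A--B, and the subdifferential bookkeeping in the stationarity bound, which I regard as the main obstacle. Specifically, I must justify splitting the proximal optimality condition into a value-function subgradient $\partial_1\overline\psi$ plus a normal-cone term $\mathcal N_{\overline X}(x^s_{\nu,i_\nu})$ (a convex sum rule relying on $\operatorname{int}\overline X\neq\emptyset$ and the finiteness of $\overline\psi(\bullet,z;\xi^s)$ on $\overline X$ guaranteed by Assumption~A), and I must verify that the admissible choice $c^s_{\nu,i_\nu}\in\partial_1(-f)(x^s_{\nu,i_\nu},y^s_{\nu,i_\nu};\xi^s)\subseteq\partial_2(-\overline\psi)(x^s_{\nu,i_\nu},x_{\nu,i_\nu};\xi^s)$ from Lemma \ref{lem:subdiff}(a) indeed places $v_s$ in the advertised set. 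Once these memberships are secured, the algebraic cancellation producing $d=\frac{1}{\gamma_\nu}(x_{\nu,i_\nu}-x_{\nu,i_\nu+1})$ is immediate and the bound follows.
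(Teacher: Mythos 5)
Your proposal is correct and takes essentially the same route as the paper's proof: your strong-convexity growth estimate for the master objective is just a repackaged form of the paper's explicit optimality-condition computation for part (a) (majorization, tangency at $x_{\nu,i}$, and the quadratic term in $\widehat{e}_{\gamma_\nu}\psi$), and your stationarity argument combines the proximal-subproblem and master-problem optimality conditions with the same cancellation yielding $d=\frac{1}{\gamma_\nu}(x_{\nu,i_\nu}-x_{\nu,i_\nu+1})$ and hence $\operatorname{dist}(0,\mathcal S)\le\varepsilon_\nu$, exactly as in the paper. The technical points you flag (the sum-rule splitting with $\mathcal N_{\overline X}(x^s_{\nu,i_\nu})$, which is in fact harmless since $0\in\mathcal N_{\overline X}(x^s_{\nu,i_\nu})$, and the membership $c^s_{\nu,i_\nu}\in\partial_2(-\overline\psi)$ via Lemma \ref{lem:subdiff}(a)) are handled at the same level of rigor as in the paper itself.
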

\begin{proof}
Consider  the $\nu$-th inner loop of Algorithm \ref{alg:finite_samples}.
We first show that the sequence $\left\{\widehat\zeta_{\,S,\gamma_\nu}(x_{\nu,i})\right\}_{i\geq 0}$ is nonincreasing. Since
 the function $\widehat{e}_{\gamma_{\nu}}\psi(\,\bullet\,; \xi^{s}; x_{\nu,i})$ defined in \eqref{eq:Moreau_surrogate} is quadratic, we have, by writing $a_{\nu,i}^s \,\triangleq \, \nabla_x\,\widehat{e}_{\gamma_{\nu}}\psi(\,\bullet\,;\xi^{s}; x_{\nu,i})(x_{\nu,i+1})$,
\begin{align*}
    \widehat\zeta_{\,S,\gamma_\nu}(x_{\nu,i})
    =
    &  \;\varphi(x_{\nu,i})+
    \frac{1}{S} \sum\limits_{s=1}^{S} \widehat{e}_{\gamma_{\nu}}\psi(x_{\nu,i}; \xi^{s};  x_{\nu,i})  \\
    = &\; \varphi(x_{\nu,i}) + \frac{1}{S} \sum\limits_{s=1}^{S}\widehat{e}_{\gamma_{\nu}}\psi(x_{\nu,i+1}; \xi^{s}; x_{\nu,i})\\
    &+\frac{1}{S}\sum\limits_{s=1}^{S}{(a_{\nu,i}^s)}^\top(x_{\nu,i}-x_{\nu,i+1}) + \frac 1 {2\gamma_\nu}\|x_{\nu,i}-x_{\nu,i+1}\|^2,
 \end{align*}
 where the first equality is because  $\widehat{e}_{\gamma_{\nu}}\psi(x_{\nu,i};\xi^{s};x_{\nu,i}) = e_{\gamma_\nu}\psi(x_{\nu,i}; \xi^s)$.
Since $x_{\nu,i+1}$ is the optimal solution of the master problem \eqref{eq:master}, one may obtain that
\[
 	\left(b_{\nu,i}+\frac{1}{S}\sum\limits_{s=1}^S a^s_{\nu, i} \right)^\top(x_{\nu,i}-x_{\nu,i+1}) \geq 0 \quad\text{for some $b_{\nu, i}\in\partial\varphi(x_{\nu,i+1})$.}
 \]
The convexity of $\varphi$ and the above inequalities imply that
 \begin{equation}\label{eq:descent}
 	\begin{split}
    \widehat\zeta_{\,S,\gamma_\nu}(x_{\nu,i})\geq & \;\,\varphi(x_{\nu,i+1})+\frac{1}{S} \sum\limits_{s=1}^{S} \widehat{e}_{\gamma_{\nu}}\psi(x_{\nu,i+1};\xi^{s}; x_{\nu,i})+\frac 1 {2\gamma_\nu}\|x_{\nu,i}-x_{\nu,i+1}\|^2\\
    \geq & \;\, \varphi(x_{\nu,i+1})+\frac{1}{S} \sum\limits_{s=1}^{S} e_{\gamma_{\nu}}\psi(x_{\nu,i+1};\xi^{s})+\frac 1 {2\gamma_\nu}\|x_{\nu,i}-x_{\nu,i+1}\|^2\\
    = & \;\, \widehat\zeta_{\,S,\gamma_\nu}(x_{\nu,i+1}) + \frac 1 {2\gamma_\nu} \|x_{\nu,i} - x_{\nu,i+1}\|^2,
    \end{split}
\end{equation}
where the second inequality uses the fact that $\widehat{e}_{\gamma}\,\psi(x;\xi^s; \bar{x}) \geq e_{\gamma}\psi(x;\xi^s)$ for any $x$, $\bar{x}$ and $\xi^s$. We thus prove the part (a). Using the compactness of $X$ and Assumption B, one may further derive from Lemma \ref{lem:gap_bound} that
\[
    \inf\limits_{x \in X} \, \widehat\zeta_{\,S,\gamma_\nu}(x) \,  \geq \, \left[\, \inf\limits_{x \in X}\, \varphi(x) \,\right] + \frac 1 {S} \sum\limits_{s=1}^{S} \left[\,\inf\limits_{x \in X}\psi(x; \xi^{s}) - \frac {\gamma_{\nu}} 2 \cdot\kappa_1(\xi^s)^2 \,\right]> -\infty.
\]
Hence, the sequence $\left\{ \widehat\zeta_{\,S,\gamma_\nu}(x_{\nu,i}) \right\}_{i\geq 0}$ is bounded below, which further yields that $\left\{\widehat\zeta_{\,S,\gamma_\nu}(x_{\nu,i})\right\}_{i\geq 0}$ converges and $\|x_{\nu,i+1} - x_{\nu,i}\|$ converges to $0$ as $i\rightarrow \infty$.  The latter convergence indicates that the inner iterations terminate after finite steps.

To show the rest statement of this proposition, we first obtain from  the optimality condition of the master problem \eqref{eq:master} at $x_{\nu,i_\nu+1}$ that
\begin{equation}\label{eq:proof theorem1}
\begin{array}{rl}
    0\;\in &\partial \left(\varphi(x_{\nu,i_\nu+1})+
    \displaystyle\frac{1}{S} \sum\limits_{s=1}^{S} \widehat{e}_{\gamma_{\nu}}\psi (x_{\nu,i_\nu+1}; \xi^{s}; x_{\nu,i_\nu}) \right) + \mathcal N_X(x_{\nu,i_\nu+1})\\[0.15in]
   = & \partial\varphi(x_{\nu,i_\nu+1}) + \displaystyle\frac 1 {S}\sum\limits_{s=1}^{S} \left( \frac {x_{\nu,i_\nu+1} - x^s_{\nu,i_\nu}} {\gamma_\nu} -c^s_{\nu,i_\nu}\right) +  \mathcal N_X(x_{\nu,i_\nu+1}),
   \end{array}
\end{equation}
where the equation is due to the sum rule of the subdifferentials for convex functions \cite[Theorem 23.8]{rockafellar1970convex}.
From the optimality condition of the subproblem \eqref{eq:subprob}, we obtain
\begin{align*}
    0\;&\in\;\partial_1 \;\overline\psi\left(x^s_{\nu,i_\nu}, x_{\nu,i_\nu};\xi^{s}\right) + \left(x^s_{\nu,i_\nu} - x_{\nu,i_\nu}\right)/{\gamma_{\nu}} {+\mathcal N_{\overline X}(x^s_{\nu,i_\nu})},\quad\forall \, s=1,\ldots,S.
\end{align*}
Taking the sum over \eqref{eq:proof theorem1} and the above inclusions from $s = 1, \ldots, S$, we get 
\begin{align*}
    0
    &\in \frac 1 {S}\sum\limits_{s=1}^{S} \partial_1 \;\overline\psi\left(x^s_{\nu,i_\nu}, x_{\nu,i_\nu};\xi^{s}\right) + \left[ \partial \varphi(x_{\nu,i_{\nu+1}}) - \frac 1 S\sum\limits_{s=1}^S c^s_{\nu,i_\nu} +  \mathcal N_X(x_{\nu,i_\nu+1}) \right]\\
    &\quad + (x_{\nu,i_\nu+1} - x_{\nu,i_\nu})/\gamma_{\nu}
    {+ \frac{1}{S} \sum_{s=1}^S \mathcal N_{\overline X}(x^s_{\nu,i_\nu})}\\
    & \subseteq \frac 1 S\sum\limits_{s=1}^{S} \left[\partial_1 \overline\psi (x^s_{\nu,i_{\nu}}, x_{\nu,i_{\nu}};\xi^{s}) -  \partial_2 (-\overline\psi\,) (x^s_{\nu,i_{\nu}},x_{\nu,i_{\nu}};\xi^{s})\right]+ \partial\varphi(x_{\nu,i_\nu+1}) \\
    & \quad + (x_{\nu,i_\nu+1} - x_{\nu,i_\nu})/{\gamma_{\nu}} + \mathcal N_X(x_{\nu,i_\nu+1}) {+ \frac{1}{S} \sum_{s=1}^S \mathcal N_{\overline X}(x^s_{\nu,i_\nu})},
\end{align*}
where the last inclusion is due to  the definition of $c^s_{\nu,i_\nu}$. Consequently, we derive
\begin{align*}
    &\operatorname{dist}\left(0,
        {\begin{array}{cc}
            \displaystyle\frac 1 S\sum\limits_{s=1}^{S} \left[\partial_1 \overline\psi (x^s_{\nu,i_{\nu}}, x_{\nu,i_{\nu}};\xi^{s}) -  \partial_2 (-\overline\psi\,) (x^s_{\nu,i_{\nu}},x_{\nu,i_{\nu}};\xi^{s})
            {+ \mathcal N_{\overline X}(x^s_{\nu,i_\nu})}\right]\\
            + \; \partial\varphi(x_{\nu,i_{\nu}+1}) +  \mathcal N_X(x_{\nu,i_\nu+1})
        \end{array}}
        \right)\\
    \leq & \; \|x_{\nu,i_\nu+1} - x_{\nu,i_\nu}\|/{\gamma_{\nu}} \leq \varepsilon_{\nu},
\end{align*}
where the last inequality is due to the stopping rule of the inner loop.
\end{proof}

Let $\bar{x}$ be an accumulation point of the sequence $\{x_{\nu}\}_{\nu\geq 0}$, which must exist due to the compactness of $X$. We let $\{x_{\nu+1}\}_{\nu\in\alpha}$ be a convergent subsequence to $\bar{x}$, where $\alpha$ is a subset of $\mathbb N$. 
In the following lemma, we show the convergence of the sequence $\{x^s_{\nu,i_\nu}\}_{\nu\in \alpha}$ for each $s$ with $x^s_{\nu,i_\nu}$ being the partial optimal solution of the subproblem \eqref{eq:subprob} at $(z,\xi) = (x_{\nu, i_\nu}, \xi^s)$. 

\begin{lemma}\label{lem:prox_converge1}
Let Assumptions A-B hold and $\bar{x}$ be an accumulation point of the sequence $\{x_{\nu}\}_{\nu\geq 0}$ generated by Algorithm \ref{alg:finite_samples}. If the subsequence $\{x_{\nu+1}\}_{\nu\in\alpha}$ converges to $\bar{x}$, then $\{x_{\nu,i_\nu}\}_{\nu\in\alpha}$ converges to $\bar x$ and, for each $s=1,\ldots,S$, the corresponding subsequence  $\{x^s_{\nu,i_\nu}\}_{\nu\in\alpha}$ also converges to $\bar{x}$.
\end{lemma}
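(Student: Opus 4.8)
The plan is to prove the two convergence claims in turn, first for the inner-loop terminal iterates $\{x_{\nu,i_\nu}\}_{\nu\in\alpha}$ and then for the proximal points $\{x^s_{\nu,i_\nu}\}_{\nu\in\alpha}$, in each case by controlling the distance to the already-convergent subsequence $\{x_{\nu+1}\}_{\nu\in\alpha}$ via the triangle inequality. The whole argument reduces to two displacement estimates, both of which vanish because $\gamma_\nu\downarrow 0$.

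For the first claim I would simply invoke the stopping rule of the inner loop. Since $x_{\nu+1} = x_{\nu,i_\nu+1}$ by construction, the termination criterion gives $\|x_{\nu+1} - x_{\nu,i_\nu}\| = \|x_{\nu,i_\nu+1} - x_{\nu,i_\nu}\| \leq \varepsilon_\nu \gamma_\nu$. As both $\{\gamma_\nu\}$ and $\{\varepsilon_\nu\}$ decrease to $0$, the product $\varepsilon_\nu\gamma_\nu\to 0$, so $\|x_{\nu+1} - x_{\nu,i_\nu}\|\to 0$. Combining this with the hypothesis $x_{\nu+1}\to\bar x$ along $\alpha$ immediately yields $x_{\nu,i_\nu}\to\bar x$ along $\alpha$.

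The crux of the proof is the second claim, for which I would derive a quantitative bound on the proximal displacement $\|x^s_{\nu,i_\nu} - x_{\nu,i_\nu}\|$. Recall that $x^s_{\nu,i_\nu} = P_{\gamma_\nu}\psi(x_{\nu,i_\nu};\xi^s)\in\overline X$ attains the infimum defining the partial Moreau envelope in \eqref{eq:subprob} (solvability by Assumption A), while $x_{\nu,i_\nu}\in X$ by construction of the master problem. Reusing the identity from the first line of the proof of Lemma \ref{lem:gap_bound} together with the icc relation $\psi(z;\xi)=\overline\psi(z,z;\xi)$, one obtains
\[
0 \leq \psi(x_{\nu,i_\nu};\xi^s) - e_{\gamma_\nu}\psi(x_{\nu,i_\nu};\xi^s) = \overline\psi(x_{\nu,i_\nu},x_{\nu,i_\nu};\xi^s) - \overline\psi(x^s_{\nu,i_\nu},x_{\nu,i_\nu};\xi^s) - \frac{1}{2\gamma_\nu}\|x^s_{\nu,i_\nu} - x_{\nu,i_\nu}\|^2.
\]
Dropping the nonnegative left-hand side and applying the Lipschitz bound of Assumption B to $\overline\psi(\bullet,x_{\nu,i_\nu};\xi^s)$ gives
\[
\frac{1}{2\gamma_\nu}\|x^s_{\nu,i_\nu} - x_{\nu,i_\nu}\|^2 \leq \kappa_1(\xi^s)\,\|x^s_{\nu,i_\nu} - x_{\nu,i_\nu}\|,
\]
whence $\|x^s_{\nu,i_\nu} - x_{\nu,i_\nu}\| \leq 2\gamma_\nu\kappa_1(\xi^s)$. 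Since $\gamma_\nu\downarrow 0$, this displacement vanishes, and combining it with the first claim via the triangle inequality yields $x^s_{\nu,i_\nu}\to\bar x$ along $\alpha$ for each $s=1,\ldots,S$.

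The main obstacle is the proximal-displacement estimate just described; the rest is elementary triangle-inequality bookkeeping. The two ingredients that make this step work are that the partial Moreau envelope attains its infimum at the proximal point (so the squared-distance term can be extracted from the envelope gap), and that Assumption B furnishes a $z$-uniform Lipschitz constant $\kappa_1(\xi^s)$, which converts the quadratic-in-displacement inequality into a bound that is linear in $\gamma_\nu$. One should check at the outset that all iterates lie in the sets for which Assumption B applies, namely $x_{\nu,i_\nu}\in X\subseteq\overline X$ and $x^s_{\nu,i_\nu}\in\overline X$, so that the Lipschitz estimate is legitimate.
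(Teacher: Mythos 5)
Your proof is correct, and its skeleton --- the stopping rule $\|x_{\nu+1}-x_{\nu,i_\nu}\| = \|x_{\nu,i_\nu+1}-x_{\nu,i_\nu}\|\leq \varepsilon_\nu\gamma_\nu \to 0$ for the first claim, then a proximal-displacement estimate plus the triangle inequality for the second --- matches the paper's. The genuine difference lies in how the displacement $\|x^s_{\nu,i_\nu}-x_{\nu,i_\nu}\|$ is controlled. The paper tests the infimum defining $e_{\gamma_\nu}\psi(x_{\nu,i_\nu};\xi^s)$ at the accumulation point $\bar x$, which yields
\[
\left\| x^s_{\nu,i_\nu} - x_{\nu,i_\nu} \right\| \;\leq\; \left\| \bar{x} - x_{\nu,i_\nu} \right\| + \sqrt{2\gamma_\nu \, \kappa_1(\xi^s) \, R(\overline X)},
\]
an $O(\sqrt{\gamma_\nu})$ bound that invokes the diameter $R(\overline X)$ of the compact set $\overline X$ and entangles the estimate with the convergence of $\{x_{\nu,i_\nu}\}_{\nu\in\alpha}$ to $\bar x$. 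You instead test at $z = x_{\nu,i_\nu}$ itself, i.e., you rerun the gap identity underlying Lemma \ref{lem:gap_bound} (in effect redoing its completion-of-the-square step: from $\kappa_1(\xi^s)\,t - t^2/(2\gamma_\nu)\geq 0$ one extracts $t \leq 2\gamma_\nu\kappa_1(\xi^s)$ rather than the value bound $\gamma_\nu\kappa_1(\xi^s)^2/2$). This gives the sharper, self-contained bound $\|x^s_{\nu,i_\nu}-x_{\nu,i_\nu}\|\leq 2\gamma_\nu\kappa_1(\xi^s)$, linear rather than square-root in $\gamma_\nu$, with no reference to $\bar x$ or to $R(\overline X)$, so the displacement vanishes uniformly regardless of which subsequence is under consideration. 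Both arguments use the same two ingredients in the same way --- attainment of the proximal point $x^s_{\nu,i_\nu}=P_{\gamma_\nu}\psi(x_{\nu,i_\nu};\xi^s)\in\overline X$ guaranteed through Assumption A, and the $z$-uniform Lipschitz constant of Assumption B --- and you correctly verify the domain conditions ($x_{\nu,i_\nu}\in X$ since the master problem constrains iterates to $X$, and $x^s_{\nu,i_\nu}\in\overline X$, with $z=x_{\nu,i_\nu}\in X$ as Assumption B requires), so no gap remains; if anything, your estimate is the tidier one.
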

\begin{proof}
Since $\lim\limits_{\nu \rightarrow \infty} \|x_{\nu+1} - x_{\nu,i_\nu}\| = \lim\limits_{\nu \rightarrow \infty} \|x_{\nu,{i_\nu+1}} - x_{\nu,i_\nu}\| = 0$ from Theorem \ref{thm:innerconverge1}, we know that $\{x_{\nu,i_\nu}\}_{\nu\in\alpha}$ also converges to $\bar x$. For each $s=1,\ldots,S$ and $\nu\in\alpha$, we have, by the definition of $x^s_{\nu,i_\nu} = P_{\gamma_\nu}\psi(x_{\nu,i_{\nu}};\xi^s) { \, \in \overline X}$,
\begin{align*}
    \frac {1} {2 \gamma_{\nu}} \left\| x^s_{\nu,i_\nu} - x_{\nu,i_\nu}\right\|^2 + \inf_{{x \in {\overline X}}} \overline\psi (x, x_{\nu,i_\nu};\xi^{s})
    \leq \; &\frac {1} {2 \gamma_{\nu}} \left\| x^s_{\nu,i_\nu} - x_{\nu,i_\nu}\right\|^2 + \overline\psi \left(x^s_{\nu,i_\nu}, x_{\nu,i_\nu};\xi^{s}\right)\\
    = \; &e_{\gamma_{\nu}} \psi \left(x_{\nu,i_\nu}; \xi^{s}\right)
    \leq \frac {1} {2 \gamma_{\nu}} \| \bar{x} - x_{\nu,i_\nu}\|^2 + \overline\psi (\bar{x}, x_{\nu,i_\nu};\xi^{s}).
\end{align*}
The above inequality  yields that
\begin{align*}
    \left\| x^s_{\nu,i_\nu} - x_{\nu,i_\nu} \right\|
    &\leq \sqrt{\| \bar{x} - x_{\nu,i_\nu}\|^2 + 2\gamma_{\nu}\Big(\, \overline\psi (\bar{x}, x_{\nu,i_\nu};\xi^{s}) - \displaystyle\operatornamewithlimits{inf}_{{x \in {\overline X}}} \overline\psi (x, x_{\nu,i_\nu};\xi^{s}) \Big)}\\
    &\leq \| \bar{x} - x_{\nu,i_\nu} \| + \sqrt{2\gamma_\nu \Big(\, \overline\psi (\bar{x}, x_{\nu,i_\nu};\xi^{s}) - \displaystyle\operatornamewithlimits{inf}_{{x \in {\overline X}}} \overline\psi (x, x_{\nu,i_\nu};\xi^{s}) \Big)}\\
    &\leq \left\| \bar{x} - x_{\nu,i_\nu} \right\| + \sqrt{2\gamma_\nu \, \kappa_1(\xi^s) \, R({\overline X})}
    \quad \rightarrow 0 \;\,\text{as $\nu\rightarrow\infty$},
\end{align*}
where $R({\overline X})$ denotes the diameter of the compact set ${\overline X}$, and the last inequality follows from the uniform Lipschitz continuity of $\overline\psi(\bullet,z;\xi)$ on {$\overline X$} in Assumption B. Since $\kappa_1(\xi^s)<\infty$ for each $s$ and $\gamma_\nu \to 0$, we derive that
\begin{align*}
	\left\| x^s_{\nu,i_\nu} - \bar{x} \right\|
	&\leq \left\| x^s_{\nu,i_\nu} - x_{\nu,i_\nu} \right\| + \left\| x_{\nu,i_\nu} - \bar{x} \right\| \;\, \rightarrow 0 \;\,\text{as $\nu\rightarrow\infty$}\,.
\end{align*}
The proof is completed.
\end{proof}

{
For any $r>0$, we denote
\[
    \partial_1^{\,r} \overline{\psi}(\bar{x}, \bar{x}) \triangleq \bigcup\limits_{x, z \in \B(\bar{x}, r)} \partial_1 \;\overline{\psi}(x, z),
\]
and similarly for $\partial_2^{\, r}(-\overline\psi)$. We summarize a few results of the subgradient and the normal cone mappings below that will be used in the subsequent analysis.

\begin{lemma}\label{lem:subdifferential}
    Under Assumption A, the following statements hold.\\
(a) The mappings $\partial\varphi: \mathbb R^{n_1} \, {\rightrightarrows} \, \mathbb R^{n_1}$,  $\mathcal N_X: X \, {\rightrightarrows} \, \mathbb R^{n_1}$,  $\partial_1 \overline\psi: \operatorname{int}(\overline X) \times \operatorname{int}(\overline X) \, {\rightrightarrows} \, \mathbb R^{n_1}$ and $\partial_2 (-\overline\psi): \operatorname{int}(\overline X) \times \operatorname{int}(\overline X) \, {\rightrightarrows} \, \mathbb R^{n_1}$ are osc;\\
(b) For any fixed $\overline x \in X (\subseteq \operatorname{int}(\overline X))$, there exists $r > 0$ such that $\partial_1^{\,r} \overline\psi(\overline x, \overline x)$ is bounded.
\end{lemma}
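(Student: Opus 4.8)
The plan is to handle the two parts separately, with the continuity of the finite saddle function $\overline\psi$ on $\operatorname{int}(\overline X) \times \operatorname{int}(\overline X)$ serving as the common workhorse for both. For the outer semicontinuity claims in part (a), the assertions for $\partial\varphi$ and $\mathcal N_X$ are immediate from classical convex analysis: the subdifferential of the finite convex function $\varphi$ and the normal-cone mapping of the closed convex set $X$ both have closed graphs (see, e.g., \cite{rockafellar1970convex}), which for these locally bounded monotone mappings is equivalent to being osc. The substance lies in $\partial_1\overline\psi$ and $\partial_2(-\overline\psi)$. Here I would first record that, by Assumption A together with Proposition \ref{prop:icc}, the lifted function $\overline\psi$ is finite on $\overline X \times \overline X$, convex in its first argument, and concave in its second; hence it is a finite saddle function and is therefore jointly continuous on the open set $\operatorname{int}(\overline X) \times \operatorname{int}(\overline X)$.

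To prove that $\partial_1\overline\psi$ is osc, I would take sequences $(x^\nu, z^\nu) \to (\bar x, \bar z)$ in $\operatorname{int}(\overline X)\times\operatorname{int}(\overline X)$ and $v^\nu \to v$ with $v^\nu \in \partial_1\overline\psi(x^\nu, z^\nu)$, and write out the subgradient inequality $\overline\psi(x, z^\nu) \geq \overline\psi(x^\nu, z^\nu) + (v^\nu)^\top(x - x^\nu)$ for arbitrary $x$. Passing to the limit, using the joint continuity of $\overline\psi$ on the left and right together with $v^\nu \to v$ and $x^\nu \to \bar x$, yields $\overline\psi(x, \bar z) \geq \overline\psi(\bar x, \bar z) + v^\top(x - \bar x)$ for all $x$, i.e., $v \in \partial_1\overline\psi(\bar x, \bar z)$. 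The argument for $\partial_2(-\overline\psi)$ is identical after replacing $\overline\psi(x,\bullet)$ by the convex function $-\overline\psi(x,\bullet)$.

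For part (b), since $\bar x \in X \subseteq \operatorname{int}(\overline X)$, I would fix $r > 0$ small enough that the closed ball $\B(\bar x, 2r) \subseteq \operatorname{int}(\overline X)$. By the joint continuity established above, $\overline\psi$ is bounded on the compact set $\B(\bar x, 2r) \times \B(\bar x, 2r)$, say $m \leq \overline\psi \leq M$ there. Then for any $x, z \in \B(\bar x, r)$ and any nonzero $v \in \partial_1\overline\psi(x, z)$, evaluating the subgradient inequality at the point $x^\prime \triangleq x + r\,v/\|v\| \in \B(\bar x, 2r)$ gives $r\|v\| \leq \overline\psi(x^\prime, z) - \overline\psi(x, z) \leq M - m$, whence $\|v\| \leq (M-m)/r$ uniformly over $\B(\bar x, r)\times\B(\bar x, r)$. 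This bounds $\partial_1^{\,r}\overline\psi(\bar x, \bar x)$.

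The main obstacle is the joint continuity (equivalently, the local boundedness) of $\overline\psi$ on $\operatorname{int}(\overline X) \times \operatorname{int}(\overline X)$: separate convexity in $x$ and concavity in $z$ only deliver continuity in each variable individually, and one must invoke the classical fact that a finite convex--concave (saddle) function is automatically continuous---indeed locally Lipschitz---on the interior of its domain. Once this is in place, both parts reduce to standard subgradient manipulations; the only care required is to keep the auxiliary points ($x^\nu$, $x^\prime$, and the limit) inside $\operatorname{int}(\overline X)$ so that finiteness and the saddle structure remain available throughout.
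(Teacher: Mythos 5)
Your proposal is correct, but it reaches the conclusion by a genuinely different route than the paper in part (a). The paper proves outer semicontinuity of $\partial_1 \overline\psi$ and $\partial_2(-\overline\psi)$ through the icc machinery: it invokes the joint upper semicontinuity of the directional derivative $\overline\psi^{\,\prime}_1((\bullet,\bullet);d)$ from \cite[Proposition 4.4.26(a)]{cui2020modern}, views $\overline\psi^{\,\prime}_1((x,z);\bullet)$ as the support function of $\partial_1\overline\psi(x,z)$, and concludes via the support-function comparison criterion \cite[Corollary 13.1.1]{rockafellar1970convex}. You instead argue directly at the level of subgradient inequalities, passing to the limit along $(x^\nu,z^\nu)\to(\bar x,\bar z)$ and $v^\nu\to v$, with the joint continuity of the finite saddle function $\overline\psi$ on $\operatorname{int}(\overline X)\times\operatorname{int}(\overline X)$ (Rockafellar's classical theorem on finite saddle functions) doing the work. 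This is more elementary and self-contained; what the paper's route buys is that the usc of directional derivatives is exactly the ingredient already available from the icc framework it relies on elsewhere, whereas your route makes explicit the continuity fact that the paper itself uses silently in its own proof of part (b). For part (b), the paper argues by contradiction with normalized subgradients $d^k = c^k/\|c^k\|$, again using joint continuity of $\overline\psi$; your direct evaluation of the subgradient inequality at $x^\prime = x + r\,v/\|v\|$ is essentially the same estimate run forward, and it yields the explicit uniform bound $\|v\|\leq (M-m)/r$ rather than just boundedness, which is arguably cleaner.

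Two small points of precision. First, in part (a) your limit passage ``for arbitrary $x$'' needs a word of justification when $x \in \overline X \setminus \operatorname{int}(\overline X)$, since joint continuity is only available on the interior: either note that for fixed $x \in \overline X$ the concave finite function $\overline\psi(x,\bullet)$ is continuous on $\operatorname{int}(\overline X)$, so $\overline\psi(x,z^\nu)\to\overline\psi(x,\bar z)$ still holds, or restrict $x$ to a ball inside $\operatorname{int}(\overline X)$ around $\bar x$ and upgrade the local subgradient inequality to a global one by convexity; your closing caveat about keeping auxiliary points in the interior gestures at this but should be made concrete. Second, your parenthetical that closed graph is equivalent to osc ``for these locally bounded monotone mappings'' is slightly off: $\mathcal N_X$ is not locally bounded near boundary points of $X$, and no local boundedness is needed---osc as defined in the paper via outer limits is precisely graph closedness (cf.\ \cite[Propositions 6.6 and 8.7]{rockafellar2009variational}, which the paper cites for exactly these two mappings). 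Neither issue affects the validity of your argument.
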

\begin{proof}
    (a) The osc of $\partial\varphi$ and $\mathcal N_X$ are standard results in convex analysis (see \cite[Propositions 6.6 and 8.7]{rockafellar2009variational}). To prove that $\partial_1 \overline\psi$ is osc on $\operatorname{int}(\overline X) \times \operatorname{int}(\overline X)$, we first notice that for any fixed $d \in \mathbb R^{n_1}$, the directional derivative $\overline\psi^{\,\prime}_1 ((\bullet, \bullet); d)$ is upper semicontinuous jointly at $(\overline x,\overline z) \in \operatorname{int}(\overline X) \times \operatorname{int}(\overline X)$ \cite[Proposition 4.4.26 (a)]{cui2020modern}. Since $\overline\psi^{\,\prime}_1 ((x, z); \bullet)$ is the support function of the partial subgradient $\partial_1 \overline\psi(x, z)$, we have
    \[
        \displaystyle\operatornamewithlimits{limsup}_{(x, z) \rightarrow (\overline x, \overline z)} \left( \sup_{a \in \partial_1 \overline\psi(x, z)} a^\top d\right)
        \leq \sup_{a^\prime \in \partial_1 \overline\psi(\overline x, \overline z)} (a^\prime)^\top d \qquad \forall \,d \in \mathbb R^{n_1},
    \]
    which implies $\displaystyle\operatornamewithlimits{lim \, sup}_{(x, z) \rightarrow (\overline x, \overline z)} \partial_1 \overline\psi(x, z) \subseteq \partial_1 \overline\psi(\overline x, \overline z)$ according to \cite[Corollary 13.1.1]{rockafellar1970convex}. The osc of $\partial_2 (-\bar{\psi})$ on $\operatorname{int}(\overline X) \times \operatorname{int}(\overline X)$  can be proved similarly.\\
    (b) Suppose for the sake of contradiction that for any $r > 0$, the set $\partial_1^{\,r} \overline\psi(\overline x, \overline x)$ is unbounded. Then there exists a sequence of subgradients $c^k \in \partial_1 \overline\psi(x^k, z^k)$ with $(x^k, z^k) \rightarrow (\overline x, \overline x)$, and $\|c^k\| \rightarrow +\infty$. By taking a subsequence if necessary, we assume  that the normalized subgradient $d^k = c^k/\|c^k\|$ converges to some $d$ of unit length. Since $x^k \rightarrow \overline x \in \operatorname{int}(\overline X)$, there exists a positive scalar $t > 0$ such that $\mathbb B(\overline x, t) \subseteq \operatorname{int}(\overline X)$ and $\mathbb B(x^k, t) \subseteq \operatorname{int}(\overline X)$ for all sufficiently large $k$. Using the convexity of $\overline\psi(\bullet, z^k)$, we obtain
$
        \overline\psi(x^k + t d^k, z^k) - \overline\psi(x^k, z^k) \geq (c^k)^\top t d^k
        = t \|c^k\|$.
    Taking limits on both sides and using the continuity of $\overline\psi(\bullet, \bullet)$ on $\operatorname{int}(\overline X) \times \operatorname{int}(\overline X)$, we have
    \[
        +\infty > \overline\psi(\overline x + t d, \overline x) - \overline\psi(\overline x, \overline x) \geq t \,\lim\limits_{k \rightarrow \infty}  \|c^k\|,
    \]
    which is a contradiction. The proof is thus completed.
\end{proof}
}
We are now ready to present the global convergence of the sequence generated by Theorem \ref{thm:consistency1}. We shall prove that every accumulation point $\bar{x}$ of $\{x_{\nu}\}_{\nu\geq 0}$ is a critical point of problem \eqref{eq:finite_scenarios}  satisfying
  \begin{equation}\label{eq: critical point}
        0 \in \partial \varphi(\bar{x}) + \frac 1 S\sum\limits_{s=1}^{S} \left[\partial_1 \overline\psi\left(\bar{x}, \bar{x};\xi^{s}\right) -  \partial_2 (-\overline\psi) \left(\bar{x},\bar{x};\xi^{s}\right) \right] +  \mathcal N_{X}(\bar{x}).
    \end{equation}
It has been shown in Lemma \ref{lem:subdiff} that {under Assumption A}, we have $\partial_C \psi(x;\xi)\subseteq \partial_1 \overline\psi\left(x, x;\xi\right) - \partial_2 (-\overline\psi)(x, x;\xi)$. Hence, the  condition \eqref{eq: critical point} is weaker than the Clarke stationarity of problem \eqref{eq:finite_scenarios} pertaining to $0\in \partial \varphi(\bar{x}) + \displaystyle\frac 1 S\sum\limits_{s=1}^{S} \partial_C \psi(\bar{x};\xi^s) + \mathcal N_{X}(\bar{x})$. The term ``critical point'' is adapted from the result of the difference-of-convex (dc) algorithm to solve a dc problem $\displaystyle\operatornamewithlimits{minimize}_{x\in X}\,\left[\, \theta_1(x) - \theta_2(x)\,\right]$, where the accumulation point satisfies $0\in \partial \theta_1(x) - 
\partial\theta_2(x) + \mathcal{N}_X(x)$ \cite{an2005dc}.
\begin{theorem}[Subsequential convergence for Algorithm \ref{alg:finite_samples}]\label{thm:consistency1}
    {Let Assumptions A-B hold. Then any accumulation point of the sequence $\big\{ x_{\nu}\big\}_{\nu \geq 0}$ generated by Algorithm \ref{alg:finite_samples} is a critical point of \eqref{eq:finite_scenarios} satisfying \eqref{eq: critical point}.}
\end{theorem}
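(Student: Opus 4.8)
The plan is to pass to the limit along $\alpha$ in the asymptotic stationarity estimate of Proposition \ref{thm:innerconverge1}(b), using the convergence of the inner solutions from Lemma \ref{lem:prox_converge1} together with the outer semicontinuity and local boundedness recorded in Lemma \ref{lem:subdifferential}. First I would fix a subsequence $\{x_{\nu+1}\}_{\nu\in\alpha}$ converging to the accumulation point $\bar x$; by Lemma \ref{lem:prox_converge1}, both $\{x_{\nu,i_\nu}\}_{\nu\in\alpha}$ and each $\{x^s_{\nu,i_\nu}\}_{\nu\in\alpha}$ then also converge to $\bar x$. Proposition \ref{thm:innerconverge1}(b) guarantees, for every $\nu\in\alpha$, subgradients $a^s_\nu\in\partial_1\overline\psi(x^s_{\nu,i_\nu},x_{\nu,i_\nu};\xi^s)$, $b^s_\nu\in\partial_2(-\overline\psi)(x^s_{\nu,i_\nu},x_{\nu,i_\nu};\xi^s)$, $e^s_\nu\in\mathcal N_{\overline X}(x^s_{\nu,i_\nu})$, $g_\nu\in\partial\varphi(x_{\nu+1})$, and $h_\nu\in\mathcal N_X(x_{\nu+1})$ whose combination
\[
    w_\nu \triangleq \frac1S\sum_{s=1}^S\big(a^s_\nu-b^s_\nu+e^s_\nu\big)+g_\nu+h_\nu
\]
satisfies $\|w_\nu\|\leq\varepsilon_\nu$; since $\varepsilon_\nu\downarrow0$, we obtain $w_\nu\to0$.

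The hard part will be that the multipliers in the two normal cones are not a priori bounded, so one cannot directly extract convergent subsequences from the raw estimate. I would resolve this in two steps. Because $\bar x\in X\subseteq\operatorname{int}(\overline X)$ and $x^s_{\nu,i_\nu}\to\bar x$, for all large $\nu\in\alpha$ the iterate $x^s_{\nu,i_\nu}$ lies in $\operatorname{int}(\overline X)$, whence $\mathcal N_{\overline X}(x^s_{\nu,i_\nu})=\{0\}$ and the terms $e^s_\nu$ drop out. Next, the sequences $\{a^s_\nu\}$ are bounded by Lemma \ref{lem:subdifferential}(b), the sequences $\{b^s_\nu\}$ are bounded by the analogous local boundedness of $\partial_2(-\overline\psi)$ (obtained by the same convexity argument applied in the $z$-coordinate), and $\{g_\nu\}$ is bounded since $\partial\varphi$ is locally bounded at $\bar x$. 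Writing $h_\nu=w_\nu-\tfrac1S\sum_{s=1}^S(a^s_\nu-b^s_\nu)-g_\nu$ for large $\nu$ then shows that the $\mathcal N_X$-element $h_\nu$ is bounded as well, even though $\mathcal N_X(\bar x)$ may be a nontrivial cone.

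With boundedness in hand, I would pass to a common further subsequence (still indexed by $\alpha$) along which $a^s_\nu\to a^s$, $b^s_\nu\to b^s$, $g_\nu\to g$, and $h_\nu\to h$. Invoking the outer semicontinuity from Lemma \ref{lem:subdifferential}(a), together with $(x^s_{\nu,i_\nu},x_{\nu,i_\nu})\to(\bar x,\bar x)$ and $x_{\nu+1}\to\bar x$, gives $a^s\in\partial_1\overline\psi(\bar x,\bar x;\xi^s)$, $b^s\in\partial_2(-\overline\psi)(\bar x,\bar x;\xi^s)$, $g\in\partial\varphi(\bar x)$, and $h\in\mathcal N_X(\bar x)$. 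Taking the limit in $w_\nu\to0$ then yields $0=\tfrac1S\sum_{s=1}^S(a^s-b^s)+g+h$, which is precisely the inclusion \eqref{eq: critical point} and completes the argument. The essential insight is thus the interplay between the interiority assumption $X\subseteq\operatorname{int}(\overline X)$, which annihilates the $\mathcal N_{\overline X}$-terms, and the residual boundedness of the $\mathcal N_X$-element, which together convert the $\varepsilon_\nu$-approximate stationarity into the exact critical-point condition in the limit.
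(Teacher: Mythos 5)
Your proposal is correct and follows essentially the same route as the paper's proof: the paper likewise combines Proposition \ref{thm:innerconverge1}(b), Lemma \ref{lem:prox_converge1}, the interiority $X \subseteq \operatorname{int}(\overline X)$ to annihilate the $\mathcal N_{\overline X}$-terms, the local boundedness from Lemma \ref{lem:subdifferential}(b) (plus the standard boundedness of $\partial\varphi$ near $\bar x$), boundedness of the $\mathcal N_X$-element by subtraction, and outer semicontinuity to pass to the limit. The only difference is presentational --- you extract convergent element selections via compactness, whereas the paper bounds the distance to the limiting set by a sum of set-deviation terms $\mathbb D(\cdot,\cdot)$ --- and the one negligible wrinkle in your write-up (an infimum defining $\operatorname{dist}$ need only give elements of norm at most, say, $2\varepsilon_\nu$ unless one notes the relevant sets are closed) does not affect the argument.
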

\begin{proof}
Let $\bar{x}$ be the limit of 
a convergent subsequence $\big\{x_{\nu+1}\big\}_{\nu\in \alpha}$.
By Theorem \ref{thm:innerconverge1} and Lemma \ref{lem:prox_converge1}, we know that the subsequences $\big\{x_{\nu,i_\nu}\big\}_{\nu\in \alpha}$ and $\big\{x^{s}_{\nu,i_\nu}\big\}_{\nu\in \alpha}$ for all $s$  converge to the same accumulation point $\bar{x}$ as $\nu\rightarrow\infty$. Using the triangle inequality of the distance function, we have that 
\[
\begin{array}{rl}
    &\operatorname{dist} \left( 0,
    \begin{array}{cc}
    	\displaystyle\frac 1 S\sum\limits_{s=1}^{S} \left[\partial_1 \overline\psi\left(\bar{x}, \bar{x};\xi^{s}\right) -  \partial_2 (-\overline\psi) \left(\bar{x},\bar{x};\xi^{s}\right) \right]
    	+ \; \partial \varphi(\bar{x}) + \mathcal N_{X}(\bar{x})
    \end{array}\right)\\[0.15in]
    \leq &\; \underbrace{\operatorname{dist}\left(0,
        {\begin{array}{cc}
         \; \displaystyle\frac 1 S\sum\limits_{s=1}^{S} \left[\partial_1 \overline\psi (x^s_{\nu, i_{\nu}},  x_{\nu, i_{\nu}}; \xi^{s}) -  \partial_2 (-\overline\psi) (x^s_{\nu, i_{\nu}}, x_{\nu, i_{\nu}}; \xi^{s})
         {+ \mathcal N_{\overline X}(x^s_{\nu,i_\nu})}\right] \\
       \; + \;\partial\varphi(x_{\nu,i_\nu+1}) + {w_\nu}
        \end{array}}
        \right)}_{\rm (\romannumeral1)}\\[0.15in]
 & \; + \; \displaystyle\frac 1 S\sum\limits_{s=1}^S \underbrace{\mathbb D\left(\,
    	\left[
    	\begin{array}{cc}
    		\partial_1\overline\psi(x^s_{\nu,i_\nu}, x_{\nu,i_\nu};\xi^{s} )\\[0.05in]
    		- \; \partial_2(-\overline\psi)(x^s_{\nu,i_\nu}, x_{\nu,i_\nu};\xi^{s} )
    	\end{array} \right],\;
    	\left[
    	\begin{array}{cc}
    		\partial_1\overline\psi\left(\bar{x}, \bar{x};\xi^{s}\right)\\[0.05in]
    		- \; \partial_2(-\overline\psi)\left(\bar{x}, \bar{x};\xi^{s}\right)
    	\end{array} \right] \,\right)}_{\rm (\romannumeral2)} \\[0.05in]
& \;    + \; { \displaystyle\frac{1}{S} \sum_{s=1}^S \underbrace{\mathbb D\left(\,\mathcal N_{\overline X}(x^s_{\nu,i_\nu}), \{0\}\,\right)}_{\rm (\romannumeral3)} +} 
    \underbrace{\mathbb D \left(\,\partial \varphi(x_{\nu,i_\nu+1}), \; \partial \varphi(\bar{x})\,\right)}_{\rm (\romannumeral4)}
     + \underbrace{\operatorname{dist}\left({w_\nu}, \; \mathcal N_{X}(\bar{x}) \,\right)}_{\rm (\romannumeral5)},
\end{array}
\]
{where $w_\nu$ can be any element in $\mathcal N_X(x_{\nu, i_\nu + 1})$. By Theorem \ref{thm:innerconverge1}, there is a sequence $\{w_\nu\}_{\nu \geq 0}$ with $w_\nu \in \mathcal N_X(x_{\nu, i_\nu + 1})$ such that ${\rm (\romannumeral1)}$ converges to $0$. Since for all $s$, $\lim\limits_{\nu(\in \alpha) \rightarrow \infty} x^s_{\nu, i_\nu} = \bar x \in X \subseteq \operatorname{int}(\overline X)$, we thus obtain $\mathcal N_{\overline X}(x^s_{\nu, i_\nu}) = \{0\}$ for sufficiently large $\nu$ and any $s$. Then ${\rm (\romannumeral3)}\rightarrow 0$ as $\nu(\in \alpha) \rightarrow \infty$. To see the convergence of terms ${\rm (\romannumeral2)}$, ${\rm (\romannumeral4)}$, and ${\rm (\romannumeral5)}$, based on the derived osc properties in Lemma \ref{lem:subdifferential} (a) and \cite[Proposition 5.12]{rockafellar2009variational}, we only need to prove that for each $s$, the sequences
\begin{equation}\label{eq:sequences}
    \{\partial_1 \overline\psi(x^s_{\nu, i_\nu}, x_{\nu, i_\nu}; \xi^s)\}_{\nu(\in \alpha) \geq \nu_0}, \; \{\partial \varphi(x_{\nu, i_\nu + 1})\}_{\nu(\in \alpha) \geq \nu_0}, \; \text{ and }\{w_\nu\}_{\nu(\in \alpha) \geq \nu_0}
\end{equation}
are uniformly bounded for sufficiently large $\nu_0$. Indeed, Lemma \ref{lem:subdifferential} (b) implies the existence of $r > 0$ and integer $\nu_0$ such that for any $\nu(\in \alpha) \geq \nu_0$ and any $s$,
\[
    \partial_1 \overline\psi(x^s_{\nu, i_\nu}, x_{\nu, i_\nu}; \xi^s) \subseteq \partial^{\, r}_1 \overline\psi(\overline x, \overline x; \xi^s) \text{ is bounded},
\]
which gives us the uniform boundedness of the first term in \eqref{eq:sequences}. The uniform boundedness of the second term in \eqref{eq:sequences} is a direct consequence of \cite[Theorem 24.7]{rockafellar1970convex} since $\varphi$ is real-valued and convex,  and $x_{\nu, i_\nu + 1} \rightarrow \overline x$. Observe that $\{w_\nu\}_{\nu \geq 0}$ must be bounded because ${\rm (\romannumeral1)}$ converges to $0$ and all set-valued mappings in (i) except $w_\nu$ have proven to be uniformly bounded. Henceforth, we have proved that any accumulation point $\overline x$ is a critical point of \eqref{eq:finite_scenarios} satisfying \eqref{eq: critical point}.}
\end{proof}

In the following, we establish the convergence to a stronger type of stationarity under additional assumptions. Suppose that $f(\bullet,\bullet;\xi^s)$ and $G(\bullet, \bullet; \xi^s)$ are continuously differentiable for all $\xi^s$. At $x = \bar{x}$, we say $\bar{y}^s$ is an optimal solution of the convex second-stage problem with $\xi = \xi^s$ and $\bar{\lambda}^s$ being the corresponding multiplier if the following Karush-Kuhn-Tucker condition is satisfied:
\begin{equation}\label{eq:KKT}
    0\in \nabla_y f(\bar{x},\bar{y}^{s};\xi^s)  + \; \displaystyle\sum_{j=1}^\ell\bar{\lambda}_j^s \nabla_y g_j(\bar{x},\bar{y}^s;\xi^s)\,\;\mbox{and}\;\,
    \bar{\lambda}^s \in N_{\R^\ell_-}\left(G(\bar{x},\bar{y}^s;\xi^s)\right).
\end{equation}
We use $Y(\bar{x}, \xi^s)$ and $M(\bar{x},\xi^s)$ to denote the set of all optimal solutions and multipliers satisfying the above condition, respectively. When $M(\bar{x},\xi^s)$ is nonempty, one may  write the critical cone of the second-stage problem at  $\bar{y}^s \in Y(\bar{x}, \xi^s)$ as
\begin{gather*}
	C_{\bar{x}}(\bar{y}^s,\xi^s) \triangleq \left\{ d \in \mathbb{R}^{n_2} \,\middle| \begin{array}{cc} \nabla_y f(\bar{x},\bar{y}^s;\xi^s)^\top d=0,\\[0.05in]
	\nabla_y\, g_j(\bar{x},\bar{y}^s;\xi^s)^\top d  \in \mathcal T_{\R-}(g_j(\bar{x},y;\xi)),\; j=1, \cdots, \ell \end{array} \right\}.
\end{gather*}
where ${\cal T}_D(x)$ denotes the tangent cone of a closed convex set $D$.
In the following, we show that if the second-stage solutions $\{\bar{y}^s\}_{s=1}^S$ are unique at the accumulation point $\bar{x}$ for each $s$, then $\bar{x}$ in fact satisfies a stronger condition. 
\begin{corollary}[Convergence to a directional stationary point]
\label{corollary:convergence}
	Let $\bar{x}$ be an accumulation point of the sequence $\big\{ x_{\nu}\big\}_{\nu \geq 0}$ generated by Algorithm \ref{alg:finite_samples}. In addition to the assumptions in Theorem \ref{thm:consistency1}, if the following conditions are satisfied for each $s$:
	\begin{enumerate}[leftmargin = 15pt, topsep = 2pt, itemsep = 2pt, label={(\alph*)}]
	    \item {The feasible set $\{ y \in \mathbb R^{n_2}\mid G(x,y;\xi^s) \leq 0\}$ is bounded, uniformly for $x \in X$};
		\item $f(\bullet,\bullet;\xi^s)$ and $G(\bullet,\bullet;\xi^s)$ are twice continuously differentiable;
		\item the set of multipliers $M(\bar{x}, \xi^s)$ is nonempty and 
		there exists $\bar y^s\in Y(\bar{x}, \xi^s)$  satisfying the second order sufficient condition that for all $d\in C_{\bar{x}}( \bar y^s, \xi^s)\setminus \{0\}$,
		\[
			\sup_{(\mu,\lambda)\in M(\bar y^s,\xi^s)} d^\top \nabla^2_{yy}\left[\, f(\bar{x},\bar y^s;\xi^s) + \sum_{j=1}^\ell \lambda_j g_j(\bar{x},\bar y^s;\xi^s)\,\right] d > 0;
		\]
	\end{enumerate}
	then $\bar{x}$ is  a directional stationary point of problem \eqref{eq:finite_scenarios}, i.e., 
	\[
	\varphi^\prime(\bar{x};d) + \frac{1}{S}\sum_{s=1}^S \psi^\prime(\bar{x};d) \geq 0, \quad \forall\; d\in {\cal T}_X(\bar{x}).
	\]
\end{corollary}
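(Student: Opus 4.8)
The plan is to strengthen the critical-point inclusion \eqref{eq: critical point} produced by Theorem~\ref{thm:consistency1} into the claimed directional inequality by combining the implicitly convex-concave directional-derivative calculus with the fact that, under (a)--(c), the concave (in $z$) part of each lifted recourse is differentiable at $\bar x$. Fix a scenario $\xi^s$ and regard $\overline\psi(\bullet,\bullet;\xi^s)$ as the icc lift of $\psi(\bullet;\xi^s)$. First I would record, via the icc directional-derivative formula \cite[Proposition~4.4.26]{cui2020modern}, that $\psi$ is directionally differentiable at $\bar x$ with
\[
\psi^\prime(\bar x;d;\xi^s) \;=\; \overline\psi^{\,\prime}_1\big((\bar x,\bar x);d;\xi^s\big) + \overline\psi^{\,\prime}_2\big((\bar x,\bar x);d;\xi^s\big),
\]
and then rewrite each partial derivative through its support-function representation: since $\overline\psi(\bullet,\bar x;\xi^s)$ is convex, $\overline\psi^{\,\prime}_1((\bar x,\bar x);d;\xi^s)=\max_{a\in\partial_1\overline\psi(\bar x,\bar x;\xi^s)}a^\top d$, and since $\overline\psi(\bar x,\bullet;\xi^s)$ is concave, $\overline\psi^{\,\prime}_2((\bar x,\bar x);d;\xi^s)=-\max_{b\in\partial_2(-\overline\psi)(\bar x,\bar x;\xi^s)}b^\top d$.

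The decisive step is to prove that $\partial_2(-\overline\psi)(\bar x,\bar x;\xi^s)$ is a singleton. I would first argue that the second-stage solution is unique: the problem $\min_y\{f(\bar x,y;\xi^s)\mid G(\bar x,y;\xi^s)\le 0\}$ is convex in $y$, so the second-order sufficient condition (c) makes $\bar y^s$ a strict and hence the unique global minimizer, i.e. $\overline Y(\bar x,\bar x;\xi^s)=\{\bar y^s\}$. Next, because $\overline\psi(\bar x,\bullet;\xi^s)=\min_y\{f(\bullet,y;\xi^s)\mid G(\bar x,y;\xi^s)\le 0\}$ is a pointwise minimum of functions concave in the first slot over a feasible set that is fixed in $z$ and compact by (a), Danskin's theorem \cite{danskin2012theory} under the smoothness (b) identifies the superdifferential of this concave value function at $\bar x$ with $\operatorname{conv}\{\nabla_x f(\bar x,y;\xi^s)\mid y\in\overline Y(\bar x,\bar x;\xi^s)\}$; uniqueness of $\bar y^s$ collapses it to the single gradient, so $\partial_2(-\overline\psi)(\bar x,\bar x;\xi^s)=\{-\nabla_x f(\bar x,\bar y^s;\xi^s)\}$, which is consistent with the inclusion in Lemma~\ref{lem:subdiff}(a).

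With the singleton in hand, the assembly is short. Theorem~\ref{thm:consistency1} supplies $g\in\partial\varphi(\bar x)$, elements $a^s\in\partial_1\overline\psi(\bar x,\bar x;\xi^s)$, the unique $b^s\in\partial_2(-\overline\psi)(\bar x,\bar x;\xi^s)$, and $w\in\mathcal N_X(\bar x)$ satisfying $g+\frac1S\sum_{s=1}^S(a^s-b^s)+w=0$. Because $b^s$ is the only element of $\partial_2(-\overline\psi)(\bar x,\bar x;\xi^s)$, it attains the maximum in the support-function expression, giving $\overline\psi^{\,\prime}_2((\bar x,\bar x);d;\xi^s)=-b^s{}^\top d$; together with $\overline\psi^{\,\prime}_1((\bar x,\bar x);d;\xi^s)\ge a^s{}^\top d$ this yields the pointwise bound $\psi^\prime(\bar x;d;\xi^s)\ge (a^s-b^s)^\top d$ for every $d$. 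Using $\varphi^\prime(\bar x;d)\ge g^\top d$ and $w^\top d\le 0$ for $d\in\mathcal T_X(\bar x)$, I would sum over $s$ and substitute the critical-point relation to obtain $\varphi^\prime(\bar x;d)+\frac1S\sum_{s=1}^S\psi^\prime(\bar x;d;\xi^s)\ge g^\top d+\frac1S\sum_{s=1}^S(a^s-b^s)^\top d=-w^\top d\ge 0$, i.e. directional stationarity.

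I expect the singleton/Danskin step to be the crux. One must check that (a) furnishes exactly the compactness Danskin requires, that (b)--(c) upgrade the strict local optimality forced by the second-order condition into global uniqueness of $\bar y^s$ in the convex second-stage problem, and---most delicately---that the superdifferential of the concave optimal-value function is captured \emph{exactly}, not merely bounded below, by the gradient at $\bar y^s$, so that $\overline\psi^{\,\prime}_2$ is genuinely linear in $d$. The asymmetry is the conceptual point worth emphasizing: only the concave ($z$) part needs to be smooth, whereas $\partial_1\overline\psi$ may stay set-valued because the convex piece enters through a maximum that is automatically compatible with the $a^s$ delivered by Theorem~\ref{thm:consistency1}.
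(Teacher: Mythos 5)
Your proposal is correct in substance and follows essentially the same route as the paper's proof. Both arguments reduce the corollary to showing that $\partial_2(-\overline\psi)(\bar x,\bar x;\xi^s)$ is a singleton, obtained by (i) Danskin's theorem --- the paper cites \cite[Theorem 2.1]{clarke1975generalized}, using the compactness from condition (a) and the smoothness from (b) --- which yields $\partial_2(-\overline\psi)(\bar x,\bar x;\xi^s)=\operatorname{conv}\big\{-\nabla_x f(\bar x,y;\xi^s)\mid y\in Y(\bar x;\xi^s)\big\}$, and (ii) the second-order sufficient condition (c) forcing $Y(\bar x;\xi^s)$ to be a singleton; the paper cites \cite[Example 13.25]{rockafellar2009variational} here, while your strict-local-minimizer-hence-unique-global-minimizer argument in the convex second-stage problem is a fine elementary substitute. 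The only divergence is the final step: the paper delegates it to \cite[Proposition 6.1.11]{cui2020modern}, whereas you unroll that proposition by hand via the support-function representation and the critical-point relation \eqref{eq: critical point}; your assembly $\varphi^\prime(\bar x;d)+\frac{1}{S}\sum_{s=1}^S\psi^\prime(\bar x;d;\xi^s)\ge g^\top d+\frac{1}{S}\sum_{s=1}^S(a^s-b^s)^\top d=-w^\top d\ge 0$ is exactly what that citation hides.

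One point of order you should fix. You open by asserting, for the general icc lift, that $\psi(\bullet;\xi^s)$ is directionally differentiable with the sum formula $\psi^\prime(\bar x;d)=\overline\psi^{\,\prime}_1((\bar x,\bar x);d)+\overline\psi^{\,\prime}_2((\bar x,\bar x);d)$, attributing this to \cite[Proposition 4.4.26]{cui2020modern}. That formula is not available for a general icc function, and indeed the failure of directional differentiability is the very reason the paper works with critical points rather than d-stationary points. Writing
\[
\psi(\bar x+td)-\psi(\bar x)=\big[\overline\psi(\bar x+td,\bar x+td)-\overline\psi(\bar x+td,\bar x)\big]+\big[\overline\psi(\bar x+td,\bar x)-\overline\psi(\bar x,\bar x)\big],
\]
the second bracket divided by $t$ converges to $\overline\psi^{\,\prime}_1((\bar x,\bar x);d)$, but the first bracket (the concave slot evaluated at a moving first argument) is only sandwiched between $-(b^\prime_t)^\top d$ and $-(b_t)^\top d$ with $b_t\in\partial_2(-\overline\psi)(\bar x+td,\bar x;\xi^s)$ and $b^\prime_t\in\partial_2(-\overline\psi)(\bar x+td,\bar x+td;\xi^s)$, and its limit can fail to exist when $\partial_2(-\overline\psi)(\bar x,\bar x;\xi^s)$ is not a singleton. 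It is precisely your singleton step --- combined with the outer semicontinuity and local boundedness of $\partial_2(-\overline\psi)$ near $(\bar x,\bar x)$, in the spirit of Lemma \ref{lem:subdifferential} --- that forces $b_t,b^\prime_t\to b^s$ and legitimizes the sum formula. So the singleton argument must come \emph{first}; with that reordering (and this one-line justification of the diagonal chain rule in place of the blanket appeal to Proposition 4.4.26) your proof is complete and matches the paper's.
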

\begin{proof}
	We first prove that $\partial_2 (-\overline\psi)(\bar{x},\bar{x};\xi^s)$ is a singleton under given assumptions. {By condition (a) and the convexity of $(-f)(\bullet,z)$, we can apply Danskin theorem \cite[Theorem 2.1]{clarke1975generalized} to get}
	\[
		\partial_2(-\overline\psi)(\bar{x},\bar{x};\xi^s) = \operatorname{conv}\Big\{ -\nabla_x f(\bar{x},y;\xi^s) \mid y\in Y(\bar{x};\xi^s) \Big\}.
	\]
Since the second order sufficient condition of the second-stage problem holds at $\bar{x}$ for any $\xi^s$, we have that  $Y(\bar{x};\xi^s)$ is a singleton \cite[Example 13.25]{rockafellar2009variational}, which further implies that $	\partial_2(-\overline\psi)(\bar{x},\bar{x};\xi^s)$ is a singleton. The desired directional stationarity of $\bar{x}$ then follows from \cite[Proposition 6.1.11]{cui2020modern}.
\end{proof}

Theorem \ref{thm:consistency1} and Corollary \ref{corollary:convergence}
pertain to the subsequential convergence of the iterative sequence generated by Algorithm \ref{alg:finite_samples}. In the following, we show that the full sequence of the  objective values along the iterations converges if the sequence of the Moreau parameters $\{\gamma_\nu\}_{\nu\geq 0}$ is summable. This result particularly indicates that although the sequence $\{x_{\nu}\}_{\nu \geq 0}$ may have multiple accumulation points, the objective values at the accumulation points are the same.
To proceed, we remind the readers of the definition of $\overline\zeta_{S}$ in \eqref{eq:zeta def}.

\begin{theorem}[Convergence of objective values for Algorithm \ref{alg:finite_samples}]
	Suppose that Assumptions A-B hold. Let $\{ x_{\nu}\}_{\nu \geq 0}$ be the sequence generated by Algorithm \ref{alg:finite_samples} under the additional condition that $\sum\limits_{\nu=0}^\infty \gamma_\nu < \infty$. Then 
    	$\lim\limits_{\nu\rightarrow\infty}\overline\zeta_S(x_\nu) = \overline\zeta_S(\bar{x})$,
    	where $\bar{x}$ is any accumulation point of the iterative sequence $\{x_{\nu}\}_{\nu \geq 0}$.
\end{theorem}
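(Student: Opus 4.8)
The plan is to exploit the uniform closeness between $\overline\zeta_S$ and its Moreau-regularized surrogate $\widehat\zeta_{\,S,\gamma_\nu}$, combine it with the monotone decrease of the inner loop to show that $\{\overline\zeta_S(x_\nu)\}_{\nu\geq 0}$ is ``almost nonincreasing'', and then invoke continuity of the objective at the accumulation point. Set $C \triangleq \frac{1}{2S}\sum_{s=1}^S \kappa_1(\xi^s)^2$, which is finite since there are finitely many scenarios and each $\kappa_1(\xi^s)<\infty$ by Assumption B. The first step is to apply Lemma \ref{lem:gap_bound} scenario-wise to the recourse $\psi(\bullet;\xi^s)$, which yields the uniform sandwich
\[
    \widehat\zeta_{\,S,\gamma_\nu}(x) \;\leq\; \overline\zeta_S(x) \;\leq\; \widehat\zeta_{\,S,\gamma_\nu}(x) + \gamma_\nu C, \qquad \forall\, x \in X.
\]

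Second, I would chain this sandwich with the inner-loop descent. Since $x_{\nu,0}=x_\nu$ and $x_{\nu+1}=x_{\nu,i_\nu+1}$, telescoping Proposition \ref{thm:innerconverge1}(a) gives $\widehat\zeta_{\,S,\gamma_\nu}(x_{\nu+1}) \leq \widehat\zeta_{\,S,\gamma_\nu}(x_\nu)$. Applying the upper bound at $x_{\nu+1}$, this descent, and the lower bound at $x_\nu$ in turn produces
\[
    \overline\zeta_S(x_{\nu+1}) \;\leq\; \widehat\zeta_{\,S,\gamma_\nu}(x_{\nu+1}) + \gamma_\nu C \;\leq\; \widehat\zeta_{\,S,\gamma_\nu}(x_\nu) + \gamma_\nu C \;\leq\; \overline\zeta_S(x_\nu) + \gamma_\nu C,
\]
so that $\overline\zeta_S(x_{\nu+1}) - \overline\zeta_S(x_\nu) \leq \gamma_\nu C$ for every $\nu$.

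Third, I would turn this ``almost-decrease'' into convergence of the objective values. Defining $b_\nu \triangleq \overline\zeta_S(x_\nu) - C\sum_{k=0}^{\nu-1}\gamma_k$, the previous display shows $b_{\nu+1}\leq b_\nu$, so $\{b_\nu\}$ is nonincreasing. Because each $\psi(\bullet;\xi^s)=\overline\psi(\bullet,\bullet;\xi^s)$ is continuous on the compact set $X\subseteq\operatorname{int}(\overline X)$ (the continuity of $\overline\psi$ on $\operatorname{int}(\overline X)\times\operatorname{int}(\overline X)$ was already used in Lemma \ref{lem:subdifferential}), the function $\overline\zeta_S$ is bounded below on $X$; together with $\sum_{k\geq 0}\gamma_k<\infty$ this makes $\{b_\nu\}$ bounded below. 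A nonincreasing sequence bounded below converges, and since $C\sum_{k=0}^{\nu-1}\gamma_k \to C\sum_{k=0}^\infty\gamma_k$ is finite, the limit $\zeta^\star \triangleq \lim_{\nu\to\infty}\overline\zeta_S(x_\nu)$ exists.

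Finally, I would identify $\zeta^\star$ with $\overline\zeta_S(\bar{x})$. Letting $\{x_{\nu+1}\}_{\nu\in\alpha}\to\bar{x}$, the continuity of $\overline\zeta_S$ at $\bar{x}\in X$ (from continuity of $\overline\psi$ near $(\bar{x},\bar{x})$ and continuity of $\varphi$) gives $\overline\zeta_S(\bar{x})=\lim_{\nu(\in\alpha)\to\infty}\overline\zeta_S(x_{\nu+1})=\zeta^\star$, whence $\lim_{\nu\to\infty}\overline\zeta_S(x_\nu)=\overline\zeta_S(\bar{x})$ as claimed. The one delicate point is establishing genuine continuity of $\overline\zeta_S$ at the accumulation point; this is not automatic for a nonconvex nonsmooth value function, but it follows here because the lifted map $\overline\psi$ is finite and convex-concave, hence continuous, on $\operatorname{int}(\overline X)\times\operatorname{int}(\overline X)$, and $X$ lies in that interior.
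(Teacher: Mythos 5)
Your proposal is correct and takes essentially the same approach as the paper's proof: the same scenario-wise application of Lemma \ref{lem:gap_bound} to sandwich $\overline\zeta_S$ between $\widehat\zeta_{\,S,\gamma_\nu}$ and $\widehat\zeta_{\,S,\gamma_\nu}+\gamma_\nu C$, the same use of the inner-loop descent from Proposition \ref{thm:innerconverge1}(a) with the summability of $\{\gamma_\nu\}$, and the same continuity argument to identify the limit at the accumulation point. The only difference is presentational: where the paper cites the quasi-Fej{\'e}r monotone convergence lemma, you inline that step via the nonincreasing auxiliary sequence $b_\nu = \overline\zeta_S(x_\nu) - C\sum_{k=0}^{\nu-1}\gamma_k$ (discarding the unneeded quadratic descent terms), and you justify the continuity of $\overline\zeta_S$ on $X$ somewhat more explicitly than the paper does, via the joint continuity of the finite convex-concave function $\overline\psi$ on $\operatorname{int}(\overline X)\times\operatorname{int}(\overline X)$.
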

\begin{proof}
One may derive that
	\begin{equation}\label{eq:value_diff}
		\begin{split}
		\overline\zeta_{S}(x_{\nu+1})-\overline\zeta_{S}(x_\nu)
		= \; &\Big[\, \overline\zeta_{S}(x_{\nu+1})-\widehat\zeta_{\,S,\gamma_\nu}(x_{\nu+1}) \,\Big]\\
		&+\Big[\, \widehat\zeta_{\,S,\gamma_\nu}(x_{\nu+1})-\widehat\zeta_{\,S,\gamma_\nu}(x_\nu) \,\Big]+\Big[\, \widehat\zeta_{\,S,\gamma_\nu}(x_\nu)-\overline\zeta_{S}(x_\nu) \,\Big],
		\end{split}
	\end{equation}
	where the first and last terms on the right side are gaps between the partial Moreau envelopes $\widehat\zeta_{\,S,\gamma_\nu}$ and original functions $\overline\zeta_{S}$ at $x_{\nu+1}$ and $x_\nu$, respectively. By Lemma \ref{lem:gap_bound} and {Assumption B}, we may obtain that
	\begin{equation}\label{eq:PMEerror}
		\overline\zeta_{S}(x_{\nu+1})-\widehat\zeta_{\,S,\gamma_\nu}(x_{\nu+1}) \leq \frac 1 {S} \sum\limits_{s=1}^{S} {\gamma_\nu \, {\kappa_1}(\xi^s)^2}/2 \quad\text{and}\quad \widehat\zeta_{\,S,\gamma_\nu}(x_\nu)-\overline\zeta_{S}(x_\nu) \leq 0.
	\end{equation}
	Recall that $x_{\nu+1}=x_{\nu,i_\nu+1}$ and $x_\nu=x_{\nu,0}$. Then the second term on the right side of \eqref{eq:value_diff} can be bounded above based on Theorem \ref{thm:innerconverge1}(a) that
	\begin{equation}\label{eq:Descenterror}
		\widehat\zeta_{\,S,\gamma_\nu}(x_{\nu+1})-\widehat\zeta_{\,S,\gamma_\nu}(x_\nu) \leq -\frac 1 {2\gamma_\nu} \sum\limits_{i=0}^{i_\nu}\|x_{\nu,i+1}-x_{\nu,i}\|^2.
	\end{equation}
Substituting \eqref{eq:PMEerror} and \eqref{eq:Descenterror} into the inequality \eqref{eq:value_diff}, we have
	\[
		\overline\zeta_{S}(x_{\nu+1})-\overline\zeta_{S}(x_\nu)\leq
	 - \frac 1 {2\gamma_\nu} \sum\limits_{i=0}^{i_\nu}\|x_{\nu,i+1}-x_{\nu,i}\|^2 + 	\frac {\gamma_\nu} 2 \left[\frac 1 {S} \sum\limits_{s=1}^{S}{\kappa_1}(\xi^s)^2 \right].
	\]
	Since the sequence $\{ \overline\zeta_S(x_{\nu})\}_{\nu\geq 0}$ must be bounded below due to Assumption A and $\displaystyle\sum\limits_{\nu=1}^\infty \gamma_\nu < \infty$, one may easily  obtain the convergence of $\left\{\overline\zeta_S(x_{\nu})\right\}_{\nu \geq 0}$ that is a so-called quasi-Fej{\'e}r monotone sequence; see, e.g., \cite[Lemma 3.1]{combettes2001quasi}. 
For any convergent subsequence $\{x_{\nu+1}\}_{\nu\in\alpha}$ and its
limit $\bar{x}$, we have $\lim\limits_{\nu(\in \alpha)\rightarrow\infty}\overline\zeta_S(x_{\nu+1}) = \overline\zeta_S(\bar{x})$ by  the continuity of $\psi(x;\xi^s)$ on $X$ for each $s$. Therefore, the full sequence $\left\{\overline\zeta_S(x_{\nu})\right\}_{\nu\geq 0}$ converges to $\overline\zeta_S(\bar{x})$ for any accumulation point $\bar{x}$.
\end{proof}

\section{A sampling-based decomposition algorithm}
\label{sec:sampling}
In this section, we consider a generally distributed random vector $\tilde\xi$ with a known distribution. Instead of the approach in the previous section that deals with a fixed batch of samples throughout the algorithm,
we incorporate the sampling strategy into the outer loop to progressively enlarge the problem size.
In general, there are two ways to do the sampling for solving SPs. One is to use the sample average approximation to select a subset of data before the execution of the numerical algorithm \cite{shapiro2003monte,shapiro2021lectures,homem2014monte}. The other is to adopt a sequential sampling technique \cite{higle2013stochastic,Homem-de-Mello03, pasupathy2021adaptive, RoysetPolak07} where scenarios are gradually added along the iterations. Our method falls into the latter category. 

\begin{algorithm}[H]
\caption{A sampling-based decomposition algorithm for the SP \eqref{eq:first_stage}}
\label{alg:sampling_based}
{\bf Input:} Initial point $x_0\in X$, two positive scalar sequences $\{\gamma_{\nu}\}_{{\nu}=0}^\infty \downarrow 0$, $\{\varepsilon_\nu\}_{\nu=0}^\infty \downarrow 0$, and a sequence of incremental sample size $\{S_\nu\}_{\nu=0}^\infty$.
\vskip 0.1in
{\bf Outer loop:} Set $S_{-1}=0$ and $\nu=0$.
\begin{algorithmic}[1]
	\State Generate i.i.d.\ samples $\left\{\xi^{S_{\nu-1} +\triangle}\right\}_{\triangle=1}^{S_{\nu} - S_{\nu-1}}$ from the distribution of $\tilde\xi$ that are independent of previous samples.
	\State Execute the inner loop of Algorithm \ref{alg:finite_samples} with the initial point  $x_\nu$, samples $\{\xi^s\}_{s=1}^{S_{\nu}}\triangleq\{\xi^s\}_{s=1}^{S_{\nu-1}}\bigcup\left\{\xi^{S_{\nu-1} +\triangle}\right\}_{\triangle=1}^{S_{\nu} - S_{\nu-1}}$,  and parameters $\gamma_{\nu}$ and $\varepsilon_\nu$.
	\State Set $\nu \leftarrow \nu+1$ and repeat step 1 until a prescribed stopping criterion is satisfied.
\end{algorithmic}
\end{algorithm}

We rely on the LLN for convex  subdifferentials to establish the almost surely convergence of the iterative sequence generated by Algorithm \ref{alg:sampling_based}. To facilitate this tool, Lipschitz continuity of the original function is needed. In our setting, this requires that the lifted function $\overline\psi(\bullet,\bullet;\xi)$ is Lipschitz continuous. In addition to Assumption B on the Lipschitz continuity of $\overline\psi(\bullet,z;\xi)$,  we further pose the following assumption on the Lipschitz continuity of $\overline\psi(x,\bullet;\xi)$.

\vskip 0.1in
\noindent
\begin{center}
\fbox{%
  \parbox{0.98\textwidth}{
  {\bf Assumption C.} There exists a measurable function $\kappa_2: \Xi \rightarrow \R_+$ such that $\E[\kappa_2(\xi)] < \infty$ and the following inequality holds for almost any $\xi\in \Xi$:
        \[
            \left| \overline\psi (x, x_1;\xi) - \overline\psi (x, x_2;\xi) \right| \leq  \kappa_2(\xi) \, \| x_1 - x_2 \|,\quad\forall \, x \in {\overline X}, \;x_1, x_2 \in X.
        \]
        \vskip -0.05in
}}
\end{center}
\vskip 0.1in


We have the following result on the LLN for the subdifferentials of icc functions, which is a consequence of the LLN for random set-valued mappings \cite{artstein1975strong}. In fact, the result can be viewed as a pointwise version of \cite[Theorem 2]{shapiro2007uniform}.
\begin{lemma}\label{lem:ULLN}
    Suppose that Assumptions {A-}C hold. For any {$x \in X$ and any} $r > r^\prime \geq 0$ {such that $\mathbb B(x, r) \subseteq \operatorname{int}(\overline X)$}, the following limit holds almost surely:
    \[
     \lim_{\nu\rightarrow\infty} \mathbb D \left(
        \frac 1 {S_\nu}\sum\limits_{s=1}^{S_\nu}
        \begin{bmatrix} \hskip -0.2in \partial_1^{\,r^\prime}  \,\overline\psi (x, x; \xi^{s}) \\[0.05in]
     \; - \,\partial_2^{\,r^\prime} (-\overline\psi)(x, x; \xi^{s}) \, \end{bmatrix},\hskip -0.2in
      \begin{array}{cc}\E_{\tilde\xi}\left[ \partial_1^{\,r} \, \overline\psi (x, x; \tilde\xi)\right] \\[0.05in]
        \qquad -\E_{\tilde\xi}\left[ \partial_2^{\,r} (-\overline\psi)(x, x; \tilde\xi)\right] \end{array}\right)
        = 0,
    \]
    where the expectation of a random set-valued mapping $\E_{\tilde\xi}\left[\mathcal A(x;\tilde\xi)\right]$ is defined as the set of $\E_{\tilde\xi}\left[a(x;\tilde\xi)\right]$ for all measurable selections $a(x;\xi) \in \mathcal A(x;\xi)$.
\end{lemma}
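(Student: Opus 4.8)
The plan is to recognize the stated limit as an instance of the strong law of large numbers for i.i.d.\ random compact sets due to Artstein and Vitale \cite{artstein1975strong}, applied to the two enlarged partial-subdifferential mappings and then combined through set subtraction. First I would verify that, for the fixed $x\in X$ with $\mathbb B(x,r)\subseteq\operatorname{int}(\overline X)$, each of the maps $\xi\mapsto\partial_1^{\,r'}\overline\psi(x,x;\xi)$ and $\xi\mapsto\partial_2^{\,r'}(-\overline\psi)(x,x;\xi)$ is a well-defined random compact set. Compactness follows from Lemma \ref{lem:subdifferential}: the partial subgradient mappings are outer semicontinuous and locally bounded on $\operatorname{int}(\overline X)\times\operatorname{int}(\overline X)$, so the union of subdifferentials over the compact product ball $\mathbb B(x,r')\times\mathbb B(x,r')$ is closed and bounded. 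Integrable boundedness is immediate from Assumptions B and C, since every element of $\partial_1\overline\psi(\bullet,z;\xi)$ has norm at most $\kappa_1(\xi)$ and every element of $\partial_2(-\overline\psi)(x,\bullet;\xi)$ has norm at most $\kappa_2(\xi)$, and these Lipschitz moduli are integrable.

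Next I would apply the Artstein--Vitale law of large numbers. Since the $\xi^s$ are i.i.d., so are the random compact sets $\partial_1^{\,r'}\overline\psi(x,x;\xi^s)$; hence the averages $\frac1n\sum_{s=1}^n\partial_1^{\,r'}\overline\psi(x,x;\xi^s)$ converge almost surely, in the Hausdorff metric, to the convexified Aumann expectation $\E_{\tilde\xi}[\operatorname{conv}\partial_1^{\,r'}\overline\psi(x,x;\tilde\xi)]$, and likewise for the $\partial_2^{\,r'}$ mapping. Because set addition is subadditive and reflection is an isometry for the Hausdorff metric, the set difference $(A,B)\mapsto\{a-b\mid a\in A,\,b\in B\}$ is jointly $1$-Lipschitz, and $\frac1n\sum_s[A_s-B_s]=\frac1n\sum_s A_s-\frac1n\sum_s B_s$; thus the combined average converges a.s.\ in the Hausdorff metric to $\E_{\tilde\xi}[\operatorname{conv}\partial_1^{\,r'}\overline\psi(x,x;\tilde\xi)]-\E_{\tilde\xi}[\operatorname{conv}\partial_2^{\,r'}(-\overline\psi)(x,x;\tilde\xi)]$. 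Since the sample sizes satisfy $S_\nu\uparrow\infty$, restricting this a.s.\ convergent full sequence to the subsequence indexed by $S_\nu$ preserves the limit.

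It remains to dominate the target one-sided deviation by the larger $r$-expectation. Here I would use the nesting $\partial_1^{\,r'}(\cdots;\xi)\subseteq\partial_1^{\,r}(\cdots;\xi)$ and $\partial_2^{\,r'}(\cdots;\xi)\subseteq\partial_2^{\,r}(\cdots;\xi)$ implied by $r'\le r$, together with the monotonicity and additivity of the Aumann integral, to obtain $\E_{\tilde\xi}[\operatorname{conv}\partial_1^{\,r'}]-\E_{\tilde\xi}[\operatorname{conv}\partial_2^{\,r'}]\subseteq\E_{\tilde\xi}[\partial_1^{\,r}\overline\psi(x,x;\tilde\xi)]-\E_{\tilde\xi}[\partial_2^{\,r}(-\overline\psi)(x,x;\tilde\xi)]$, invoking that under the continuous (nonatomic) distribution the Aumann expectation is convex and coincides with its convexification. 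Since the Hausdorff distance dominates the one-sided deviation $\mathbb D$, and $\mathbb D(\,\cdot\,,\,\cdot\,)$ is nonincreasing in its second argument, convergence of the average to a subset of the displayed right-hand side yields $\mathbb D(\,\cdot\,,\,\cdot\,)\to 0$ almost surely, which is the assertion.

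The main obstacle I anticipate is the measurability of the enlarged mappings $\xi\mapsto\partial_1^{\,r'}\overline\psi(x,x;\xi)$, i.e.\ that they are genuine measurable multifunctions so that the Aumann integral and the Artstein--Vitale theorem legitimately apply. Because this set is an uncountable union of subdifferentials over the ball $\mathbb B(x,r')\times\mathbb B(x,r')$ and outer semicontinuity permits the subdifferential to enlarge at isolated points, one cannot reduce to a countable dense subset; instead one must invoke a measurable-projection or Castaing-representation argument, exploiting that $\overline\psi(\bullet,\bullet;\xi)$ is a Carath\'eodory optimal-value function so that the graph $\{(x',z',v)\mid v\in\partial_1\overline\psi(x',z';\xi)\}$ depends measurably on $\xi$. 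A secondary point requiring care is reconciling the convexified Artstein--Vitale limit with the Aumann expectations appearing on the right-hand side, which is precisely where the nonatomicity of the continuous distribution (and the room afforded by $r>r'$) is used.
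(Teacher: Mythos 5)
Your proposal is correct and follows essentially the same route the paper itself takes: the paper states Lemma \ref{lem:ULLN} without proof, justifying it as a consequence of the Artstein--Vitale LLN for random compact sets and as a pointwise version of Shapiro--Xu's uniform LLN for subdifferentials, and your argument is exactly that citation unpacked --- i.i.d.\ random compact sets built from the two enlarged partial-subdifferential maps (compact by the osc and local boundedness of Lemma \ref{lem:subdifferential}), combined via Minkowski subtraction, with the slack $r > r^\prime$ and the convexity of the Aumann expectation absorbing the convexification in the Artstein--Vitale limit, and the subsequence indexed by $S_\nu$ inheriting the a.s.\ limit. The caveats you flag at the end --- measurability of the enlarged multifunctions, integrability of $\kappa_1$ (which Assumption B does not literally assert, though the paper later imposes essential boundedness of $\kappa_1$), and the nonatomicity needed for convexity of the Aumann integral --- are precisely the points the paper elides by citation, so your treatment is, if anything, more complete than the paper's own.
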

A noteworthy remark about the preceding lemma is that we can interchange the partial subdifferential and expectation in the right-hand side of the distance, i.e.,
\[
    \E_{\tilde\xi}\left[\partial_1\overline\psi(x,x;\tilde\xi)\right]=\partial_1 \, \E_{\tilde\xi}\left[\, \overline\psi(x,x;\tilde\xi)\right].
\]
This is because a convex function is Clarke regular  and the Clarke regularity ensures the interchangeability of the subdifferential and the expectation \cite[Proposition 2.3.6 and Theorem 2.7.2]{clarke1990optimization}. Next we prove a technical lemma  that is similar to Lemma \ref{lem:prox_converge1}.

\begin{lemma}\label{lem:prox_converge2}
    Suppose Assumptions A-C hold {and $\kappa_1(\xi)$ in Assumption B is essentially bounded, i.e., $\inf\left\{ t \mid \mathbb P\{\xi \in \Xi \mid |\kappa_1(\xi)| > t\} = 0\right\}$ is finite.} If a subsequence $\{x_{\nu+1}\}_{\nu\in \alpha}$ generated by Algorithm \ref{alg:sampling_based} converges to a point $\bar{x}$, then the subsequence $\{x_{\nu, i_\nu}\}_{\nu\in \alpha}$ also converges to $\bar x$, and for any $r>0$, the subsequence $\left\{x^s_{\nu,i_\nu}\right\}_{s=1}^{S_\nu} \subset \B(\bar{x},r)$ almost surely for all $\nu\in\alpha$ sufficiently large.
\end{lemma}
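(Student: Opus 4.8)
The plan is to follow the blueprint of Lemma~\ref{lem:prox_converge1}, the only novelty being that the per-sample estimate must now be made uniform over the \emph{growing} index block $s = 1, \dots, S_\nu$. I would first dispose of the convergence of $\{x_{\nu, i_\nu}\}_{\nu\in\alpha}$, which is purely deterministic: since $x_{\nu+1} = x_{\nu, i_\nu+1}$ and the inner-loop stopping rule forces $\|x_{\nu, i_\nu+1} - x_{\nu, i_\nu}\| \leq \varepsilon_\nu\gamma_\nu \to 0$, the triangle inequality $\|x_{\nu, i_\nu} - \bar x\| \leq \|x_{\nu, i_\nu} - x_{\nu+1}\| + \|x_{\nu+1} - \bar x\|$ shows $\{x_{\nu, i_\nu}\}_{\nu\in\alpha} \to \bar x$ along $\alpha$.

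For the containment claim, the key preliminary is to exploit the essential boundedness of $\kappa_1$. Letting $M$ be the (finite) essential supremum of $\kappa_1(\tilde\xi)$, so that $\mathbb P\{\kappa_1(\tilde\xi) > M\} = 0$, I note that each of the countably many i.i.d.\ samples $\xi^s$ drawn across all outer iterations satisfies $\mathbb P\{\kappa_1(\xi^s) > M\} = 0$; hence by countable subadditivity there is a probability-one event on which $\kappa_1(\xi^s) \leq M$ holds simultaneously for \emph{all} sample indices $s$. Working on this event, I would reproduce the variational estimate of Lemma~\ref{lem:prox_converge1}: for each $s \leq S_\nu$, comparing the objective of $x^s_{\nu, i_\nu} = P_{\gamma_\nu}\psi(x_{\nu, i_\nu}; \xi^s)$ against the feasible point $\bar x$ gives
\[
    \frac{1}{2\gamma_\nu}\|x^s_{\nu, i_\nu} - x_{\nu, i_\nu}\|^2 \leq \frac{1}{2\gamma_\nu}\|\bar x - x_{\nu, i_\nu}\|^2 + \overline\psi(\bar x, x_{\nu, i_\nu}; \xi^s) - \inf_{x\in\overline X}\overline\psi(x, x_{\nu, i_\nu}; \xi^s),
\]
and bounding the last difference by $\kappa_1(\xi^s)\,R(\overline X) \leq M\,R(\overline X)$ through Assumption~B and the diameter bound yields the \emph{$s$-independent} estimate $\|x^s_{\nu, i_\nu} - x_{\nu, i_\nu}\| \leq \|\bar x - x_{\nu, i_\nu}\| + \sqrt{2\gamma_\nu M\,R(\overline X)}$.

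Since this upper bound no longer depends on $s$ and tends to zero (the first term because $x_{\nu, i_\nu} \to \bar x$, the second because $\gamma_\nu \downarrow 0$), a final triangle inequality gives $\sup_{1\leq s\leq S_\nu}\|x^s_{\nu, i_\nu} - \bar x\| \leq \|x_{\nu, i_\nu} - \bar x\| + \sqrt{2\gamma_\nu M\,R(\overline X)} \to 0$, so that for every $r > 0$ the whole block $\{x^s_{\nu, i_\nu}\}_{s=1}^{S_\nu}$ lies in $\B(\bar x, r)$ once $\nu\in\alpha$ is sufficiently large, and this holds on the probability-one event identified above. The main obstacle is exactly this uniformity: the finite-scenario proof of Lemma~\ref{lem:prox_converge1} simply took a maximum of the finitely many constants $\kappa_1(\xi^s)$, but with $S_\nu \to \infty$ no such fixed maximum is available, and it is precisely the essential-boundedness hypothesis on $\kappa_1$---converted into the almost-sure uniform bound $\kappa_1(\xi^s) \leq M$ over all samples---that collapses the growing family of estimates into a single one. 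Everything else is a routine transcription of the earlier lemma.
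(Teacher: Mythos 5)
Your proposal is correct and follows essentially the same route as the paper's proof: the deterministic triangle-inequality argument via the inner-loop stopping rule, the variational comparison of $x^s_{\nu,i_\nu}=P_{\gamma_\nu}\psi(x_{\nu,i_\nu};\xi^s)$ against the feasible point $\bar x$, and the bound $\overline\psi(\bar x, x_{\nu,i_\nu};\xi^s)-\inf_{x\in\overline X}\overline\psi(x,x_{\nu,i_\nu};\xi^s)\leq \kappa_1(\xi^s)\,R(\overline X)$, with essential boundedness of $\kappa_1$ supplying an $s$-independent constant. Your explicit countable-subadditivity justification of the probability-one event on which $\kappa_1(\xi^s)\leq M$ holds for all samples simultaneously is a point the paper leaves implicit (``$\kappa_1(\xi^s)$ is almost surely bounded by a constant independent of $s$''), and making it explicit is a welcome refinement rather than a deviation.
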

\begin{proof}
	Following similar derivation in the proof of  Lemma \ref{lem:prox_converge1}, we obtain the convergence of $\{x_{\nu, i_\nu}\}_{\nu\in \alpha}$ to $\bar x$ and the following inequalities
	\begin{align*}
	\left\| x^s_{\nu,i_\nu} - \bar{x} \right\|
	&\leq \left\| x^s_{\nu,i_\nu} - x_{\nu,i_\nu} \right\| + \left\| x_{\nu,i_\nu} - \bar{x} \right\|\\
	&\leq 2 \left\| x_{\nu,i_\nu} - \bar{x} \right\| + \sqrt{2\gamma_\nu \Big(\, \overline\psi (\bar{x}, x_{\nu,i_\nu};\xi^{s}) - \inf\limits_{x \in {\overline X}} \overline\psi (x, x_{\nu,i_\nu};\xi^{s}) \Big)}\\
	&\leq 2 \left\| x_{\nu,i_\nu} - \bar{x} \right\| + \sqrt{2\gamma_\nu \, \kappa_1(\xi^s) \, R({\overline X})}.
\end{align*}
{Notice that $\gamma_\nu \to 0$ and $\kappa_1(\xi^s)$ is almost surely bounded by a constant independent of $s$. Thus, for any given $r > 0$, there is a positive integer $N$ such that $\|x^s_{\nu, i_\nu} - \bar x\| \leq r$ holds for any $s = 1, 2,\cdots, S_\nu$ and all $\nu (\in \alpha) \geq N$.}
\end{proof}

Below is the main theorem of this section on the almost surely subsequential convergence  of the iterative sequence generated by Algorithm \ref{alg:sampling_based}.

\begin{theorem}[Subsequential convergence of Algorithm \ref{alg:sampling_based}]\label{thm:consistency2}
    Suppose that Assumptions A-C hold {and $\kappa_1(\xi)$ in Assumption B is essentially bounded.} Let $\{ x_{\nu}\}_{\nu \geq 0}$ be the sequence generated by Algorithm \ref{alg:sampling_based} and $\bar{x}$ be any accumulation point. For any $r>0$ {such that $\mathbb B(\bar x, r) \subseteq \operatorname{int}(\overline X)$}, the following inclusion holds almost surely: 
    	\[
        	0 \in \partial \varphi(\bar{x}) + \partial^{\,r}_1 \, \E_{\tilde\xi}\left[\, \overline\psi(\bar{x}, \bar{x}; \tilde\xi) \right] - \partial^{\,r}_2 \,\E_{\tilde\xi}\left[ (-\overline\psi)(\bar{x}, \bar{x}; \tilde\xi)\right] + \mathcal N_{X}(\bar{x}).
    	\]
In addition, if the set-valued mapping $\partial_1 \, \E_{\tilde\xi}\left[\, \overline\psi(\bullet, \bullet; \tilde\xi)\right] - \partial_2 \, \E_{\tilde\xi}\left[\, (-\overline\psi)(\bullet, \bullet; \tilde\xi) \right]$ is continuous at $(\bar{x}, \bar{x})$, then almost surely 
    \[
        0 \in \partial \varphi(\bar{x}) + \partial_1 \, \E_{\tilde\xi}\left[\, \overline\psi(\bar{x}, \bar{x}; \tilde\xi) \right] - \partial_2\,\E_{\tilde\xi}\left[ (-\overline\psi)(\bar{x}, \bar{x}; \tilde\xi) \right] + \mathcal N_{X}(\bar{x}),
    \]
i.e., every accumulation point $\bar{x}$ is a  critical point of problem \eqref{eq:first_stage} almost surely.
\end{theorem}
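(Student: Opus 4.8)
The plan is to transport the argument behind Theorem~\ref{thm:consistency1} to the variable–sample–size setting, using the law of large numbers in Lemma~\ref{lem:ULLN} to replace empirical averages by expectations. Fix an accumulation point $\bar x$ and a subsequence $\{x_{\nu+1}\}_{\nu\in\alpha}\to\bar x$. Since the inner loop of Algorithm~\ref{alg:sampling_based} is precisely the inner loop of Algorithm~\ref{alg:finite_samples} run on $S_\nu$ scenarios, Proposition~\ref{thm:innerconverge1} applies verbatim with $S$ replaced by $S_\nu$ and produces a residual $g_\nu$ with $\|g_\nu\|\le\varepsilon_\nu\downarrow 0$ of the form $g_\nu=a_\nu+b_\nu+w_\nu$, where $a_\nu\in\frac1{S_\nu}\sum_{s=1}^{S_\nu}[\partial_1\overline\psi-\partial_2(-\overline\psi)+\mathcal N_{\overline X}](x^s_{\nu,i_\nu},x_{\nu,i_\nu};\xi^s)$, $b_\nu\in\partial\varphi(x_{\nu,i_\nu+1})$, and $w_\nu\in\mathcal N_X(x_{\nu,i_\nu+1})$. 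By Lemma~\ref{lem:prox_converge2}, $\{x_{\nu,i_\nu}\}_{\nu\in\alpha}\to\bar x$ and, for the prescribed $r$, all proximal points $\{x^s_{\nu,i_\nu}\}_{s}$ lie in $\B(\bar x,r')\subseteq\operatorname{int}(\overline X)$ almost surely for large $\nu\in\alpha$, with $r'<r$; in particular $\mathcal N_{\overline X}(x^s_{\nu,i_\nu})=\{0\}$, so those normal-cone terms drop out.

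This localization is exactly what activates the enlarged subdifferentials: with both $x^s_{\nu,i_\nu},x_{\nu,i_\nu}\in\B(\bar x,r')$, each $\partial_1\overline\psi(x^s_{\nu,i_\nu},x_{\nu,i_\nu};\xi^s)\subseteq\partial_1^{\,r'}\overline\psi(\bar x,\bar x;\xi^s)$ and similarly for $\partial_2$, hence $a_\nu\in\frac1{S_\nu}\sum_{s}[\partial_1^{\,r'}\overline\psi-\partial_2^{\,r'}(-\overline\psi)](\bar x,\bar x;\xi^s)$, which is exactly the empirical object controlled by Lemma~\ref{lem:ULLN}. For boundedness, Assumptions~B and~C bound the $\partial_1$- and $\partial_2$-blocks by $\kappa_1(\xi^s)$ and $\kappa_2(\xi^s)$; the essential bound on $\kappa_1$ together with the strong law $\frac1{S_\nu}\sum_s\kappa_2(\xi^s)\to\E[\kappa_2(\tilde\xi)]$ make $\{a_\nu\}$ bounded almost surely, $\{b_\nu\}$ is bounded by the local boundedness of $\partial\varphi$ at $\bar x$, and therefore $w_\nu=g_\nu-a_\nu-b_\nu$ is bounded as well.

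Passing to a further subsequence, I would then take $a_\nu\to\bar a$, $b_\nu\to\bar b$, $w_\nu\to\bar w$. Lemma~\ref{lem:ULLN} forces $\operatorname{dist}(a_\nu,\E[\partial_1^{\,r}\overline\psi(\bar x,\bar x;\tilde\xi)]-\E[\partial_2^{\,r}(-\overline\psi)(\bar x,\bar x;\tilde\xi)])\to 0$ almost surely, so $\bar a$ lands in that closed expectation set; the osc of $\partial\varphi$ and $\mathcal N_X$ from Lemma~\ref{lem:subdifferential} put $\bar b\in\partial\varphi(\bar x)$ and $\bar w\in\mathcal N_X(\bar x)$. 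Since $g_\nu\to 0$, we get $0=\bar a+\bar b+\bar w$, which — reading the expectation of the enlarged subdifferential through the interchange noted after Lemma~\ref{lem:ULLN} — is the claimed $r$-enlarged inclusion.

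For the sharper inclusion I would contract $r\downarrow 0$ along a sequence $r_k$: the assumed continuity of $\partial_1\E[\overline\psi(\bullet,\bullet;\tilde\xi)]-\partial_2\E[(-\overline\psi)(\bullet,\bullet;\tilde\xi)]$ at $(\bar x,\bar x)$ shrinks its $r_k$-enlargements back onto its value there, while closedness/osc of $\partial\varphi$ and $\mathcal N_X$ survive one more bounded extraction, yielding $0\in\partial\varphi(\bar x)+\partial_1\E[\overline\psi(\bar x,\bar x;\tilde\xi)]-\partial_2\E[(-\overline\psi)(\bar x,\bar x;\tilde\xi)]+\mathcal N_X(\bar x)$. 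The hard part will be the measure-theoretic bookkeeping behind Lemma~\ref{lem:ULLN}: the sample size $S_\nu$ grows every outer iteration, the convergence is only almost sure, and the random proximal points are merely near $\bar x$ rather than equal to it. The strict gap $r>r'$ is precisely what absorbs this last mismatch, and the \emph{essential} boundedness of $\kappa_1$ (rather than mere integrability) is what secures the uniform-in-$s$ control needed to extract the limit $\bar a$. A secondary subtlety is that the law of large numbers naturally delivers the expectation of the enlarged subdifferential, whereas the statement is phrased via the enlarged subdifferential of the expectation function $\E[\overline\psi(\bullet,\bullet;\tilde\xi)]$; reconciling these two orders relies on the Clarke regularity of the convex integrands (the interchange remark) and is cleanest exactly in the limit $r\downarrow 0$.
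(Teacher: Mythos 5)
Your proposal is correct and follows essentially the same route as the paper's proof: the inner-loop residual from Proposition~\ref{thm:innerconverge1} with $S$ replaced by $S_\nu$, localization of the proximal points via Lemma~\ref{lem:prox_converge2} (which kills the $\mathcal N_{\overline X}$ terms and activates the enlarged subdifferentials), the set-valued LLN of Lemma~\ref{lem:ULLN}, and the osc/boundedness facts of Lemma~\ref{lem:subdifferential}, with the continuity hypothesis collapsing the $r$-enlargements for the sharper inclusion (the paper cites the argument of \cite[Theorem 3]{shapiro2007uniform} where you contract $r_k\downarrow 0$ explicitly). Your packaging differs only cosmetically — you extract convergent element sequences $a_\nu, b_\nu, w_\nu$ and pass to the limit, while the paper telescopes the distance to the target set into five vanishing deviations ${\rm (\romannumeral1^\prime)}$--${\rm (\romannumeral5^\prime)}$ — and your closing remark correctly flags the expectation-versus-enlargement order subtlety that the paper itself glosses over in \eqref{eq:ULLN2}.
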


\begin{proof}
Consider any subsequence $\{x_{\nu+1}\}_{\nu\in \alpha}$ that converges to $\bar{x}$.  It follows from Lemma \ref{lem:prox_converge2} that for any $r^\prime>0$, the ball $\B(\bar{x},r^\prime)$ almost surely contains $x_{\nu,i_\nu}, x_{\nu,i_\nu+1},$ and the proximal points $\{x^s_{\nu,i_\nu}\}_{s=1}^{S_\nu}$ for all $\nu(\in\alpha)$ sufficiently large. 
Consequently, 
\begin{gather*}
    \partial_1 \,\overline\psi ( x^{s}_{\nu,i_\nu}, x_{\nu,i_\nu}; \xi^{s})
    \subseteq
    \partial_1^{\,r^\prime} \, \overline\psi (\bar{x}, \bar{x}; \xi^{s}) \quad \text{almost surely for all $\nu\in\alpha$ sufficiently large.}
\end{gather*}
We then obtain from Lemma \ref{lem:ULLN} that, {for any $r>0$ such that $\mathbb B(\bar x, r) \subseteq \operatorname{int}(\overline X)$}, the following limit holds almost surely:
\begin{equation}\label{eq:ULLN2}
    \lim_{\nu\rightarrow \infty} \mathbb D \left(
        \frac 1 {S_\nu}\sum\limits_{s=1}^{S_\nu} \begin{bmatrix}\hskip -0.3in \partial_1 \,\overline\psi (x^s_{\nu,i_\nu}, x_{\nu,i_\nu}; \xi^{s}) \\[0.05in] 
        \;- \; \partial_2 (-\overline\psi)(x^s_{\nu,i_\nu}, x_{\nu,i_\nu}; \xi^{s})\end{bmatrix}
        ,\begin{array}{lc} \partial^{\,r}_1 \,\E_{\tilde\xi}\left[\, \overline\psi(\bar{x}, \bar{x}; \tilde\xi)\right] \\[0.05in] - \; \partial^{\,r}_2 \,\E_{\tilde\xi}\Big[ (-\overline\psi)(\bar{x}, \bar{x}; \tilde\xi)\Big]\end{array} \right)
        = 0.
\end{equation}
Thus, the following estimation follows almost surely:
\begin{align*}
    &\operatorname{dist} \left( 0, \; \partial \varphi(\bar{x}) + \partial^{\,r}_1 \, \E_{\tilde\xi}\left[\, \overline\psi(\bar{x}, \bar{x}; \tilde\xi)\right] - \partial^{\,r}_2 \,\E_{\tilde\xi}\left[ (-\overline\psi)(\bar{x}, \bar{x}; \tilde\xi)\right] + \mathcal N_{X}(\bar{x}) \right)\\
    &\leq
    \underbrace{\operatorname{dist}\left(0,
        {\begin{array}{cc}
             \displaystyle\frac 1 {S_\nu}\sum\limits_{s=1}^{S_\nu}\left[\partial_1 \, \overline\psi(x^s_{\nu,i_\nu}, x_{\nu,i_\nu}; \xi^{s}) -  \partial_2 (-\overline\psi)(x^s_{\nu,i_\nu}, x_{\nu,i_\nu}; \xi^{s})
             {+ \mathcal N_{\overline X}(x^s_{\nu,i_\nu})}\right]\\
            + \; \partial\varphi(x_{\nu,i_\nu+1}) + {w_\nu}
        \end{array}}
        \right)}_{\rm (\romannumeral1^\prime)}\\[0.05in]
     & + \underbrace{\mathbb D \left(
        \displaystyle\frac 1 {S_\nu}\sum\limits_{s=1}^{S_\nu} \begin{bmatrix}\hskip -0.3in \partial_1 \,\overline\psi (x^s_{\nu,i_\nu}, x_{\nu,i_\nu}; \xi^{s}) \\[0.05in]
        \; - \; \partial_2 (-\overline\psi)(x^s_{\nu,i_\nu}, x_{\nu,i_\nu}; \xi^{s})\end{bmatrix}
        ,\begin{array}{lc} \partial^{\,r}_1 \,\E_{\tilde\xi}\left[\, \overline\psi(\bar{x}, \bar{x}; \tilde\xi)\right] \\ - \; \partial^{\,r}_2 \,\E_{\tilde\xi}\Big[ (-\overline\psi)(\bar{x}, \bar{x}; \tilde\xi)\Big]\end{array} \right)}_{\rm (\romannumeral2^\prime)}\\[0.05in]
     & { + \displaystyle \underbrace{\frac{1}{S_\nu} \sum_{s=1}^{S_\nu} \mathbb D\left(\,\mathcal N_{\overline X}(x^s_{\nu,i_\nu}), \{0\}\,\right)}_{\rm (\romannumeral3^\prime)}}
     + \underbrace{\mathbb D \left(\,\partial \varphi(x_{\nu,i_\nu+1}), \partial \varphi(\bar{x})\,\right)}_{\rm (\romannumeral4^\prime)}
     + \underbrace{\operatorname{dist}\left( {w_\nu}, \, \mathcal N_{X}(\bar{x}) \right)}_{\rm (\romannumeral5^\prime)},
\end{align*}
{
where $w_\nu$ can be any element in $\mathcal N_X(x_{\nu, i_\nu + 1})$. By Theorem \ref{thm:innerconverge1} (with sample size $S_\nu$ instead of $S$), there is a sequence $\{w_\nu\}_{\nu \geq 0}$ with $w_\nu \in \mathcal N_X(x_{\nu, i_\nu + 1})$ such that ${\rm (\romannumeral1^\prime)}$ converges to $0$. As shown in \eqref{eq:ULLN2}, we have ${\rm (\romannumeral2^\prime)} \rightarrow 0$. The term ${\rm (\romannumeral3^\prime)} \rightarrow 0$ because $\{x^s_{\nu,i_\nu}\}_{s=1}^{S_\nu} \subseteq \mathbb B(\bar x, r) \subseteq \operatorname{int}(\overline X)$ holds almost surely for sufficiently large $\nu$. The convergence of the last two terms ${\rm (\romannumeral4^\prime)}$ and ${\rm (\romannumeral5^\prime)}$ to $0$ can be derived based on similar arguments to their counterparts in the proof of Theorem \ref{thm:consistency1}.
}
Finally, if the set-valued mapping $\partial_1 \, \E_{\tilde\xi}\left[\, \overline\psi(\bullet, \bullet;\tilde\xi)\right] - \partial_2\,\E_{\tilde\xi}\left[(-\overline\psi)(\bullet, \bullet;\tilde\xi)\right]$ is continuous at $(\bar{x}, \bar{x})$, one may adopt similar arguments as the proof of \cite[Theorem 3]{shapiro2007uniform} to derive the almost surely convergence to a  critical point. 
\end{proof}


One can further derive an analogous result of Corollary \ref{corollary:convergence} for the sequence generated by  Algorithm \ref{alg:sampling_based} by strengthening the conditions (a), (b), and (c) in the former corollary to almost any $\xi\in \Xi$ so that $\partial_2 \E_{\tilde\xi}\left[(-\overline\psi)(\bar{x}, \bar{x};\tilde\xi)\right]$ is a singleton. We omit the details here for brevity. 
The last result of this section is the almost surely convergence of the objective values of 	$\{\overline\zeta_{S_\nu}(x_\nu)\}_{\nu\geq 0}$ under proper assumptions on the sample sizes $S_\nu$ and Moreau parameters $\gamma_\nu$. To proceed, we first
present a lemma on the convergence rate of the SAA in expectation. This result is obtained by using the Rademacher average of the random function $\psi(x;\tilde\xi)$, which has its source in \cite[Corollary 3.2]{ermoliev2013sample}; see also \cite[Theorem 10.1.5]{cui2020modern}.

\begin{lemma}\label{lem:ULLNrate}
Let $X$ be a compact set in $\R^n$ and $\tilde\xi: \Omega \rightarrow \Xi\subseteq\R^{m}$ be a random vector defined on a probability space $(\Omega, \mathcal F, \mathbb P)$. Let $\psi: X\times\Xi \rightarrow \R$ be a Carath{\'e}odory function.
Suppose that $\psi$   is uniformly bounded on $X\times\Xi$ and
          Lipschitz continuous in $x$ with modulus independent of $\xi$.
    Let $\{\xi^s\}_{s=1}^S$ be independent and identically distributed  random vectors following the distribution of $\tilde\xi$. Then there exists a constant $C$ such that for any $\eta\in(0,1/2)$, we have
    \[
    	\E\left[\, \sup\limits_{x\in X}\left|\frac 1 {S}\sum\limits_{s=1}^{S}\psi(x;\xi^s) - \E_{\tilde\xi}\left[ \psi(x;\tilde\xi)\right]\right| \,\right] \leq \frac {C \sqrt{1-2\eta}}{S^\eta}, \quad \forall\, S>0.
    \]
\end{lemma}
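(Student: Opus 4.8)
The plan is to follow the Rademacher-average route indicated by the references \cite{ermoliev2013sample,cui2020modern}. Write $g_S(x) \triangleq \frac1S\sum_{s=1}^S \psi(x;\xi^s) - \E_{\tilde\xi}[\psi(x;\tilde\xi)]$. The Carath\'eodory property together with the uniform bound on $\psi$ guarantees that $x\mapsto \psi(x;\xi)$ is continuous and bounded, so $\sup_{x\in X}|g_S(x)|$ is a well-defined measurable random variable and all expectations below are finite. The first step is a standard symmetrization inequality: introducing an independent copy of the sample and i.i.d.\ Rademacher signs $\{\sigma_s\}$, one obtains
\[
\E\Big[\sup_{x\in X}|g_S(x)|\Big] \;\le\; 2\,\E\Big[\sup_{x\in X}\Big|\tfrac1S\sum_{s=1}^S\sigma_s\,\psi(x;\xi^s)\Big|\Big] \;\triangleq\; 2\,\mathcal R_S,
\]
reducing the task to bounding the Rademacher average $\mathcal R_S$ of the index family $\{x\mapsto\psi(x;\tilde\xi)\}_{x\in X}$.

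Next I would exploit the two structural hypotheses to estimate $\mathcal R_S$. The uniform Lipschitz-in-$x$ modulus, say $L$, makes the sample pseudometric $\rho(x,x')=\big(\frac1S\sum_s(\psi(x;\xi^s)-\psi(x';\xi^s))^2\big)^{1/2}$ dominated by $L\|x-x'\|$, so the $\epsilon$-covering number of the index set is controlled by that of the compact set $X\subseteq\R^n$, namely $N(\epsilon)\le (A/\epsilon)^{n}$ with $A\propto L\,R(X)$ and $R(X)$ the diameter. The uniform bound $|\psi|\le B$ makes each Rademacher sum sub-Gaussian. Combining these through a one-scale $\epsilon$-net and a finite-class maximal (Massart) inequality, together with the discretization error $L\epsilon$ incurred by replacing $x$ with its nearest net point, yields
\[
\mathcal R_S \;\le\; L\,\epsilon + B\sqrt{\frac{2\big(\log 2 + n\log(A/\epsilon)\big)}{S}}\qquad\text{for every }\epsilon>0;
\]
alternatively Dudley's entropy integral gives the same order without introducing a free scale.

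The final step is to optimize over the scale $\epsilon$. Choosing $\epsilon\asymp S^{-\eta}$ balances the term $L\epsilon$ against the sub-Gaussian term, and a direct calculation (maximizing $(\log S)\,S^{-(1-2\eta)}$ over $S\ge 1$) shows that the logarithmic factor can be absorbed at the cost of the slightly degraded exponent $\eta\in(0,1/2)$, producing a uniform-in-$S$ bound of the asserted form, with the $\eta$-dependent constant coming from this optimization. I expect the main obstacle to be the passage from pointwise to uniform control, i.e.\ the estimate of $\mathcal R_S$ over the continuum $X$: this is precisely where both hypotheses are indispensable, since boundedness supplies the sub-Gaussian concentration of each Rademacher sum while the uniform Lipschitz modulus is what keeps the covering numbers of $X$ finite and polynomial, so the net/chaining argument converges. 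The measurability of the supremum (ensured by the Carath\'eodory assumption and separability of $X$) and the applicability of symmetrization to this index family are routine but should be verified in passing.
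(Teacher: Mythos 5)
Your route is the same one the paper itself invokes: the paper gives no self-contained proof of this lemma, deferring instead to the Rademacher-average argument of \cite[Corollary 3.2]{ermoliev2013sample} (see also \cite[Theorem 10.1.5]{cui2020modern}), and your symmetrization--covering--Massart chain, with the scale $\epsilon \asymp S^{-\eta}$ optimized at the end, is a faithful worked-out version of exactly that argument. The measurability and symmetrization steps, the covering bound $N(\epsilon)\le (A/\epsilon)^n$ extracted from the uniform Lipschitz modulus, and the sub-Gaussian maximal inequality extracted from the uniform bound are all deployed correctly.

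One point needs correction, and it is the only one: your final optimization does not deliver ``the asserted form.'' Maximizing $(\log S)\,S^{-(1-2\eta)}$ over $S\ge 1$ gives the maximizer $\log S = 1/(1-2\eta)$ and the maximum value $1/\bigl(e(1-2\eta)\bigr)$, so absorbing $\sqrt{\log S/S}$ into $S^{-\eta}$ costs a factor of order $(1-2\eta)^{-1/2}$; that is, the factor $\sqrt{1-2\eta}$ belongs in the \emph{denominator}, not the numerator. In fact the bound as printed in the lemma---a single $C$ valid for all $\eta\in(0,1/2)$ with $\sqrt{1-2\eta}$ in the numerator---cannot hold: fix $S=1$ and let $\eta\uparrow 1/2$; the right-hand side tends to $0$ while the left-hand side is a fixed positive number whenever $\psi(x;\cdot)$ genuinely depends on $\xi$. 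So the discrepancy lies in the paper's statement (evidently a typo for $C/\bigl(\sqrt{1-2\eta}\,S^{\eta}\bigr)$, which is what your computation and the cited sources deliver), not in your method; but you should state the corrected form rather than claim the printed one follows. Incidentally, your parenthetical alternative is stronger than you suggest: since the entropy integral $\int_0^{R(X)}\sqrt{n\log(A/\epsilon)}\,d\epsilon$ converges for this finite-dimensional Lipschitz class, Dudley's bound gives $\mathcal R_S \le C' S^{-1/2} \le C' S^{-\eta}$ for every integer $S\ge 1$ and every $\eta\le 1/2$, with no $(1-2\eta)$ degradation at all.
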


We make a remark about the above lemma. For an icc function $\psi(x;\xi)$ associated with the lifted counterpart $\overline\psi(x,z;\xi)$, the uniform Lipschitz continuity of $\psi(\bullet;\xi)$ holds on $X$ when $\overline\psi(\bullet,z;\xi)$ is uniformly Lipschitz continuous over $(z;\xi) \in X\times\Xi$ and $\overline\psi(x,\bullet;\xi)$ is uniformly Lipschitz continuous over $(x;\xi) \in X\times\Xi$. Indeed, one can deduce this uniform Lipschitz continuity from Assumptions B and C with $\displaystyle\sup_{\xi\in \Xi} \left[\,\max(\kappa_1(\xi), \, \kappa_2(\xi) )\,\right]< \infty${, which also implies the essential boundedness of $\kappa_1(\xi)$ assumed in Lemma \ref{lem:prox_converge2} and Theorem \ref{thm:consistency2}}. We are now ready to present the almost surely sequential convergence
of the objective values generated by the internal sampling scheme.

\begin{theorem}[Sequential convergence of objective values for Algorithm \ref{alg:sampling_based}]
	Suppose that Assumptions A-C and conditions for the function $\psi$ in Lemma \ref{lem:ULLNrate} hold. Let $\{ x_{\nu}\}_{\nu \geq 0}$ be the sequence generated by Algorithm \ref{alg:sampling_based}. Assume that the  parameter of the partial Moreau envelope $\gamma_\nu$ and the sample size $S_\nu$ satisfy
        \[
        	\sum\limits_{\nu=1}^\infty \gamma_\nu < \infty, \qquad \sum\limits_{\nu=1}^\infty \frac {S_{\nu+1}-S_\nu} {S_{\nu+1} \, (S_\nu)^\eta}  < \infty \quad \text{for some }\eta\in(0,1/2).
        	\]
	Then 
    	$\lim\limits_{\nu\rightarrow\infty}\overline\zeta_{S_\nu}(x_\nu) = \overline\zeta(\bar{x})$ almost surely,
    	where $\bar{x}$ is any accumulation point of the iterative sequence $\{x_{\nu}\}_{\nu \geq 0}$.
\end{theorem}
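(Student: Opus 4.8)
The plan is to track the random sequence $u_\nu \triangleq \overline\zeta_{S_\nu}(x_\nu)$ and show that, after subtracting a convergent martingale, it is quasi-Fej\'er monotone and hence converges almost surely; the limit is then identified with $\overline\zeta(\bar x)$, where $\overline\zeta(x)\triangleq\varphi(x)+\E_{\tilde\xi}[\psi(x;\tilde\xi)]$ is the population objective. Let $\mathcal F_\nu\triangleq\sigma(\xi^1,\dots,\xi^{S_\nu})$ be the $\sigma$-algebra generated by all samples used through the $\nu$-th outer loop; note that $x_{\nu+1}$ is $\mathcal F_\nu$-measurable (it is produced by the inner loop on $\{\xi^s\}_{s=1}^{S_\nu}$) while the fresh samples $\xi^{S_\nu+1},\dots,\xi^{S_{\nu+1}}$ are independent of $\mathcal F_\nu$. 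Write $a_s(x)\triangleq\psi(x;\xi^s)-\E_{\tilde\xi}[\psi(x;\tilde\xi)]$, $e_\nu(x)\triangleq\overline\zeta_{S_\nu}(x)-\overline\zeta(x)=\frac1{S_\nu}\sum_{s=1}^{S_\nu}a_s(x)$, and $\delta_\nu\triangleq\sup_{x\in X}|e_\nu(x)|$. First I would split one outer step as
\[
u_{\nu+1}-u_\nu=\underbrace{\big[\overline\zeta_{S_\nu}(x_{\nu+1})-\overline\zeta_{S_\nu}(x_\nu)\big]}_{\text{descent at fixed sample set}}+\underbrace{\big[\overline\zeta_{S_{\nu+1}}(x_{\nu+1})-\overline\zeta_{S_\nu}(x_{\nu+1})\big]}_{=\,e_{\nu+1}(x_{\nu+1})-e_\nu(x_{\nu+1})}.
\]
The first bracket is controlled exactly as in the proof of the objective-value convergence for Algorithm \ref{alg:finite_samples}: chaining Theorem \ref{thm:innerconverge1}(a) over the inner iterations and invoking Lemma \ref{lem:gap_bound} gives $\overline\zeta_{S_\nu}(x_{\nu+1})-\overline\zeta_{S_\nu}(x_\nu)\le\frac{\gamma_\nu}{2}\,\frac1{S_\nu}\sum_{s}\kappa_1(\xi^s)^2\le\frac{\gamma_\nu}{2}\bar\kappa_1^2$, where $\bar\kappa_1\triangleq\operatorname{ess\,sup}\kappa_1<\infty$ by the remark following Lemma \ref{lem:ULLNrate}.

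For the second bracket I would use the algebraic identity
\[
e_{\nu+1}(x)-e_\nu(x)=\underbrace{\frac1{S_{\nu+1}}\sum_{s=S_\nu+1}^{S_{\nu+1}}a_s(x)}_{=:\,d_\nu(x)}-\underbrace{\frac{S_{\nu+1}-S_\nu}{S_{\nu+1}}\,e_\nu(x)}_{=:\,r_\nu(x)},
\]
which isolates the contribution of the newly drawn samples from a rescaling of the old error, and then treat $d_\nu\triangleq d_\nu(x_{\nu+1})$ and $r_\nu\triangleq r_\nu(x_{\nu+1})$ by two different mechanisms. Since $x_{\nu+1}$ is $\mathcal F_\nu$-measurable and the new samples are i.i.d.\ and independent of $\mathcal F_\nu$, each summand satisfies $\E[a_s(x_{\nu+1})\mid\mathcal F_\nu]=0$, so $M_N\triangleq\sum_{\nu=0}^{N-1}d_\nu$ is a martingale with respect to $\{\mathcal F_N\}$. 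Using the uniform bound $|\psi|\le B$ (hence $|a_s|\le 2B$) one gets $\E[d_\nu^2\mid\mathcal F_\nu]\le 4B^2(S_{\nu+1}-S_\nu)/S_{\nu+1}^2\le 4B^2(1/S_\nu-1/S_{\nu+1})$, whose sum telescopes and is at most $4B^2/S_0<\infty$; thus $M_N$ is an $L^2$-bounded martingale and converges almost surely. For $r_\nu$, the uniform estimate $|r_\nu|\le\frac{S_{\nu+1}-S_\nu}{S_{\nu+1}}\delta_\nu$ together with $\E[\delta_\nu]\le C\,S_\nu^{-\eta}$ from Lemma \ref{lem:ULLNrate} yields $\sum_\nu\E|r_\nu|\le C\sum_\nu\frac{S_{\nu+1}-S_\nu}{S_{\nu+1}\,S_\nu^{\eta}}<\infty$ by hypothesis, so $\sum_\nu|r_\nu|<\infty$ almost surely by Tonelli. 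Combining, $u_{\nu+1}\le u_\nu+d_\nu+\varepsilon_\nu$ with $\varepsilon_\nu\triangleq\frac{\gamma_\nu}{2}\bar\kappa_1^2+|r_\nu|\ge0$ and $\sum_\nu\varepsilon_\nu<\infty$ a.s. Subtracting the convergent martingale, $u_\nu-M_\nu$ satisfies $(u_{\nu+1}-M_{\nu+1})\le(u_\nu-M_\nu)+\varepsilon_\nu$ and is bounded below (as $\psi,\varphi$ are bounded on $X$ and $M_\nu$ is bounded); hence it is quasi-Fej\'er monotone and converges a.s.\ by \cite[Lemma 3.1]{combettes2001quasi}, so $u_\nu=\overline\zeta_{S_\nu}(x_\nu)$ converges a.s.\ to some $u_\infty$.

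It remains to identify $u_\infty$. The uniform law of large numbers under the conditions of Lemma \ref{lem:ULLNrate} (uniform boundedness and uniform Lipschitz continuity of $\psi(\bullet;\xi)$ on the compact set $X$, together with $S_\nu\uparrow\infty$) gives $\delta_\nu\to0$ almost surely, whence $|u_\nu-\overline\zeta(x_\nu)|=|e_\nu(x_\nu)|\le\delta_\nu\to0$ and therefore $\overline\zeta(x_\nu)\to u_\infty$ a.s. For any accumulation point $\bar x$ of $\{x_\nu\}$ I would pick a subsequence $x_{\nu_j}\to\bar x$; the continuity of $\overline\zeta$ (dominated convergence using $|\psi|\le B$) gives $\overline\zeta(x_{\nu_j})\to\overline\zeta(\bar x)$, so $u_\infty=\overline\zeta(\bar x)$, and thus $\lim_\nu\overline\zeta_{S_\nu}(x_\nu)=\overline\zeta(\bar x)$ almost surely.

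The main obstacle is the second bracket. The naive bound $|e_{\nu+1}(x_{\nu+1})-e_\nu(x_{\nu+1})|\le\delta_{\nu+1}+\delta_\nu$ would require $\sum_\nu\delta_\nu<\infty$, which the hypothesis does not provide (for instance, for $S_\nu=\nu$ the quantity $\sum_\nu\E[\delta_\nu]$ diverges). The resolution, and the delicate point, is to peel off the fresh-sample part $d_\nu$: it has conditional mean zero and only summable conditional variance (automatic, since $\sum_\nu(S_{\nu+1}-S_\nu)/S_{\nu+1}^2<\infty$), so it is tamed as a convergent martingale rather than by absolute summability, while the rescaling part $r_\nu$ is precisely what the stated condition $\sum_\nu(S_{\nu+1}-S_\nu)/(S_{\nu+1}S_\nu^\eta)<\infty$ is designed to control. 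Care is also required to justify the $\mathcal F_\nu$-measurability of $x_{\nu+1}$ and the independence of the incremental samples, on which the martingale structure rests.
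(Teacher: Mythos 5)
Your proof is correct, and its skeleton coincides with the paper's: the same split of the increment $\overline\zeta_{S_{\nu+1}}(x_{\nu+1})-\overline\zeta_{S_\nu}(x_\nu)$ into an inner-loop descent/Moreau-gap part, bounded by $O(\gamma_\nu)$ via Theorem \ref{thm:innerconverge1}(a) and Lemma \ref{lem:gap_bound}, and a sample-augmentation part; the same filtration $\mathcal F_\nu=\sigma(\xi^1,\ldots,\xi^{S_\nu})$ with $x_{\nu+1}$ adapted to $\mathcal F_\nu$; the same use of Lemma \ref{lem:ULLNrate} against the hypothesis $\sum_\nu (S_{\nu+1}-S_\nu)/(S_{\nu+1}\,S_\nu^{\eta})<\infty$; and the same identification of the limit via the uniform LLN and continuity of the expected objective on the compact set $X$. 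Where you genuinely diverge is the stochastic convergence mechanism. The paper takes conditional expectations of the whole increment --- which annihilates exactly your fresh-sample term, since $d_\nu = R_{\nu,1}-\E[R_{\nu,1}\mid\mathcal F_\nu]$ and $-r_\nu = \E[R_{\nu,1}\mid\mathcal F_\nu]$ in its notation --- and then invokes the Robbins--Siegmund almost-supermartingale lemma \cite{robbins1971convergence}. You instead keep the zero-mean part explicit, show $M_N=\sum_{\nu<N} d_\nu$ is an $L^2$-bounded martingale (your telescoping bound $(S_{\nu+1}-S_\nu)/S_{\nu+1}^2 \leq 1/S_\nu - 1/S_{\nu+1}$ is correct since $S_\nu\leq S_{\nu+1}$, and costs nothing beyond the uniform bound $|\psi|\leq B$ that the hypotheses of Lemma \ref{lem:ULLNrate} supply), and then finish pathwise with an elementary quasi-Fej\'er argument on $u_\nu-M_\nu$. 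This buys self-containedness --- Doob's $L^2$ martingale convergence plus a deterministic real-sequence lemma replace Robbins--Siegmund --- at the price of having to control the fluctuation part at all, which the paper's conditional-expectation route gets for free. Two minor points, neither a gap: your bound on the Moreau-gap term uses $\operatorname{ess\,sup}\kappa_1<\infty$ where the paper only needs $\E_{\tilde\xi}[\kappa_1(\tilde\xi)^2]<\infty$ (both are licensed by the remark following Lemma \ref{lem:ULLNrate} on uniformly bounded moduli); and, like the paper, your invocation of the uniform LLN to get $\delta_\nu\to 0$ implicitly uses $S_\nu\uparrow\infty$, which is the intended regime of Algorithm \ref{alg:sampling_based}.
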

\begin{proof}
	We first prove the almost sure convergence of $\left\{\overline\zeta_{S_\nu}(x_\nu)\right\}_{\nu \geq 0}$. We have
	\begin{equation}\label{estimate2}
		\begin{array}{ll}
		\overline\zeta_{S_{\nu+1}}(x_{\nu+1})-\overline\zeta_{S_\nu}(x_\nu)
		= &
		\underbrace{\left[\, \overline\zeta_{S_{\nu+1}}(x_{\nu+1})-\overline\zeta_{S_\nu}(x_{\nu+1}) \right]}_{\triangleq \, R_{\nu,1}} +\underbrace{\Big[\, \overline\zeta_{S_\nu}(x_{\nu+1})-\widehat\zeta_{\,S_\nu,\gamma_\nu}(x_{\nu+1}) \Big]}_{\triangleq \, R_{\nu,2}}\\
		&+\underbrace{\Big[\, \widehat\zeta_{\,S_\nu,\gamma_\nu}(x_{\nu+1})-\widehat\zeta_{\,S_\nu,\gamma_\nu}(x_\nu) \Big]}_{\triangleq \, R_{\nu,3}}+\underbrace{\Big[\, \widehat\zeta_{\,S_\nu,\gamma_\nu}(x_\nu)-\overline\zeta_{S_\nu}(x_\nu) \Big]}_{\triangleq \, R_{\nu,4}}.
		\end{array}
	\end{equation}
	Using results of \eqref{eq:PMEerror} and \eqref{eq:Descenterror}, we obtain
	\begin{align*}
		R_{\nu,2} \leq \frac 1 {S_\nu} \sum\limits_{s=1}^{S_\nu} \frac {\gamma_\nu \, {\kappa_1}(\xi^s)^2} {2},\quad
		R_{\nu,3} \leq -\frac 1 {2\gamma_\nu} \sum\limits_{i = 0}^{i_\nu}\|x_{\nu,i + 1}-x_{\nu,i}\|^2\quad \mbox{and}\quad
		R_{\nu,4} \leq 0.
	\end{align*}
	Next we  compute $R_{\nu,1}$ that is the error of the sample-augmentation. It holds that
	\begin{align*}
		R_{\nu,1}
		&=\frac 1 {S_{\nu+1}}\left[\sum\limits_{s=1}^{S_\nu} \psi(x_{\nu+1};\xi^s)+\sum\limits_{s=S_\nu+1}^{S_{\nu+1}} \psi(x_{\nu+1};\xi^s)\right] - \frac 1 {S_\nu}\sum\limits_{s=1}^{S_\nu} \psi(x_{\nu+1};\xi^s)\\
		&=\left(\frac {S_\nu} {S_{\nu+1}}-1\right) \frac 1 {S_\nu}\sum\limits_{s=1}^{S_\nu} \psi(x_{\nu+1};\xi^s) + \frac 1 {S_{\nu+1}} \sum\limits_{s=S_\nu+1}^{S_{\nu+1}} \psi(x_{\nu+1};\xi^s)\\
		&=\left(\frac {S_\nu} {S_{\nu+1}}-1\right) \left[\frac 1 {S_\nu}\sum\limits_{s=1}^{S_\nu} \psi(x_{\nu+1};\xi^s)-\frac 1 {S_{\nu+1}-S_\nu}\sum\limits_{s=S_\nu+1}^{S_{\nu+1}} \psi(x_{\nu+1};\xi^s)\right].
	\end{align*}
	Let $\mathcal F_\nu\triangleq\sigma(\xi^1,\xi^2,\ldots,\xi^{S_\nu})$ be a filtration, i.e., an increasing sequence of $\sigma-$fields generated by samples used in outer iterations. Obviously $x_{\nu+1}$ is adapted to $\mathcal F_\nu$ and $\{\xi^s\}_{s=S_\nu+1}^{S_{\nu+1}}$ are independent of $\mathcal F_\nu$. Therefore, by taking conditional expectation of $R_{\nu,1}$ given $\mathcal F_\nu$, we obtain
	\[
		\E[\,R_{\nu,1}\mid \mathcal F_\nu\,]=\left(\frac {S_\nu} {S_{\nu+1}}-1\right) \left[\frac 1 {S_\nu}\sum\limits_{s=1}^{S_\nu} \psi(x_{\nu+1};\xi^s)-\E_{\tilde\xi}\left[\, \psi(x_{\nu+1};\tilde\xi) \right]\right],
	\]
	where $\{\xi^s\}_{s=1}^{S_\nu}$ and $\tilde\xi$ are independent and identically distributed. Based on the estimations of the terms $R_{\nu,1},R_{\nu,2},R_{\nu,3}$ and $R_{\nu,4}$, we have, by taking conditional expectation of \eqref{estimate2} given $\mathcal F_\nu$,
	\begin{equation}\label{eq:superMG}
		\begin{split}
		&\E\Big[\, \overline\zeta_{S_{\nu+1}}(x_{\nu+1})\mid\mathcal F_\nu\Big] - \overline\zeta_{S_\nu}(x_\nu) - \frac 1 {2\gamma_\nu} \sum\limits_{i = 0}^{i_\nu}\|x_{\nu,i + 1}-x_{\nu,i}\|^2\\
		\leq &\; \frac {S_{\nu+1}-S_\nu} {S_{\nu+1}} \left|\frac 1 {S_\nu}\sum\limits_{s=1}^{S_\nu} \psi(x_{\nu+1};\xi^s)-\E_{\tilde\xi}\left[\, \psi(x_{\nu+1};\tilde\xi)\right]\right| + \frac 1 {S_\nu} \sum\limits_{s=1}^{S_\nu} \frac {\gamma_\nu \, {\kappa_1}(\xi^s)^2} 2.
		\end{split}
	\end{equation}
	In order to show the almost sure convergence of $\left\{ \overline\zeta_{s_\nu}(x_\nu) \right\}_{\nu \geq 0}$, we need to verify that the right side of the preceding inequality is summable over $\nu$ almost surely and the sequence $\left\{\overline\zeta_{S_\nu}(x_\nu)\right\}_{\nu \geq 0}$ is bounded below almost surely. We have
	\begin{align*}
		&\E \left[\, \sum\limits_{\nu=1}^\infty \left(\frac {S_{\nu+1}-S_\nu} {S_{\nu+1}}\right) \left|\frac 1 {S_\nu}\sum\limits_{s=1}^{S_\nu} \psi(x_{\nu+1};\xi^s)-\E_{\tilde\xi}\left[\psi(x_{\nu+1};\tilde\xi)\right]\right| \,\right]\\
		= \;&\sum\limits_{\nu=1}^\infty \frac {S_{\nu+1}-S_\nu} {S_{\nu+1}} \, \E \left[\, \left|\frac 1 {S_\nu}\sum\limits_{s=1}^{S_\nu} \psi(x_{\nu+1};\xi^s)-\E_{\tilde\xi}\left[\, \psi(x_{\nu+1};\tilde\xi)\right]\right| \,\right]\\
		\leq \;& \sum\limits_{\nu=1}^\infty \frac {S_{\nu+1}-S_\nu} {S_{\nu+1}} \; \E \left[\, \sup_{x\in X}\left|\frac 1 {S_\nu}\sum\limits_{s=1}^{S_\nu} \psi(x;\xi^s)-\E_{\tilde\xi}\left[\, \psi(x;\tilde\xi)\right]\right| \,\right]\\
		\leq \;&\sum\limits_{\nu=1}^\infty \frac {(S_{\nu+1}-S_\nu) \, C \sqrt{1-2\eta}} {S_{\nu+1} \, (S_\nu)^{\eta}} < \infty, \; \text{for some $\eta\in(0,1/2)$, by Lemma \ref{lem:ULLNrate}}.
	\end{align*}
	Hence, we derive that
	\[
		\sum\limits_{\nu=1}^\infty \left(\frac {S_{\nu+1}-S_\nu} {S_{\nu+1}}\right) \left|\frac 1 {S_\nu}\sum\limits_{s=1}^{S_\nu} \psi(x_{\nu+1};\xi^s)-\E_{\tilde\xi}\left[\psi(x_{\nu+1};\tilde\xi)\right]\right| < \infty \quad \text{almost surely.}
	\]
	Since $\{\gamma_\nu\}_{\nu=0}^\infty$ is assumed to be summable, we can obtain
	\[
		\E\left[\, \sum\limits_{\nu=0}^\infty \left(\frac 1 {S_\nu} \sum\limits_{s=1}^{S_\nu} \frac {\gamma_\nu \, {\kappa_1}(\xi^s)^2} 2\right) \,\right] = \sum\limits_{\nu=0}^\infty\E \left[\frac {\gamma_\nu} {S_\nu} \sum\limits_{s=1}^{S_\nu} \frac {{\kappa_1}(\xi^s)^2} 2\right] = \sum\limits_{\nu=0}^\infty \frac {\gamma_\nu \, \E_{\tilde\xi}\left[{\kappa_1}(\tilde\xi)^2\right]} 2 < \infty.
	\]
	Consequently, $\displaystyle\sum\limits_{\nu=1}^\infty \left(\frac 1 {S_\nu} \sum\limits_{s=1}^{S_\nu} \frac {\gamma_\nu \, \kappa(\xi^s)^2} 2 \right) < \infty$ almost surely. We have thus proved that the right side of \eqref{eq:superMG} is summable over $\nu$ almost surely. Next, we show that $\left\{\overline\zeta_{S_\nu} (x_\nu)\right\}_{\nu \geq 0}$ is bounded below almost surely. To see this, note that
	\[
		\sup_{x\in X}\left|\overline\zeta_{S_\nu} (x)\right| \leq \sup_{x\in X}\left| \overline\zeta_{S_\nu} (x) - \zeta(x)\right| + \sup_{x\in X} \left|\zeta(x)\right|,
	\]
	where the first term converges to $0$ almost surely by the uniform LLN (c.f. \cite[Theorem 9.60]{shapiro2021lectures}) and the second one is bounded due to the continuity of $\zeta(x) = \varphi(x) + \mathbb{E}_{\xi}\left[ \, \psi(x;\xi) \, \right]$ on the compact set $X$. Therefore, there exists a constant $M$ such that $\overline\zeta_{S_\nu}(x_\nu)$ is bounded below by $M$ almost surely for any $\nu$. Applying Robbins-Siegmund nonnegative almost supermartingale convergence lemma (c.f.\ \cite[Theorem 1]{robbins1971convergence}), we have $\displaystyle\sum\limits_{\nu=1}^\infty \displaystyle\frac 1 {2\gamma_\nu} \sum\limits_{i = 0}^{i_\nu}\|x_{\nu,i + 1}-x_{\nu,i}\|^2 < \infty$ almost surely and the sequence $\left\{\overline\zeta_{S_\nu}(x_\nu)\right\}_{\nu \geq 0}$ converges almost surely. Finally,
	let $\bar{x}$ be the limit of a convergent subsequence $\{x_{\nu}\}_{\nu\in\alpha}$. Using the uniform convergence of $\overline\zeta_{S_\nu}$ to $\zeta$ and the continuity of $\zeta$ on the compact set $X$, it follows from \cite[Proposition 5.1]{shapiro2021lectures} that $\overline\zeta_{S_{\nu}}(x_{\nu})$ converges to $\zeta(\bar{x})$ almost surely as $\nu(\in\alpha)\rightarrow\infty$. This argument, together with the convergence of the full sequence $\left\{ \overline\zeta_{S_\nu}(x_\nu) \right\}_{\nu \geq 0}$, completes the proof of this theorem. 
\end{proof}

\section{Numerical  experiments}
\label{sec:exp}

In this section, {we present numerical results for a power system planning problem with the recourse in \eqref{ex:recourse}  and a linear first-stage objective of $x = \left(\{x_i\}_{i\in {\cal I}}, \{x_g\}_{g\in {\cal G}}\right)$.} The overall deterministic equivalent  formulation to minimize the total cost  is
\begin{equation}\label{eq:testprob}
\begin{split}
  \displaystyle\operatornamewithlimits{minimize}_{l_x \leq x \leq u_x, l_z \leq y_s \leq u_z} &\;\, \sum\limits_{i\in\mathcal I} c_i \,x_i + \sum\limits_{g\in\mathcal G} c_g \,x_g + \sum\limits_{s=1}^S \left(\sum\limits_{g\in\mathcal G} p_{sg}\,x_g\right) \left[\sum\limits_{i\in\mathcal I} \sum\limits_{j\in\mathcal J} (q_{is}-\pi_{js})\,y_{ijs}\right]\\
  \mbox{subject to}  & \;\, \sum\limits_{i\in\mathcal I} c_{i}\,x_i+\sum\limits_{g\in\mathcal G} c_g\,x_g\leq B\;\,\text{(budget constraint)},\qquad\sum\limits_{g\in\mathcal G} x_g = 1,\\
  & \;\, \sum\limits_{j\in\mathcal J} y_{ijs}\leq x_{i},\quad i\in\mathcal I,\;s=1,\ldots,S\quad\text{(capacity constraints)},\\
    & \;\, \sum\limits_{i\in\mathcal I} y_{ijs} = d_{js},\quad j\in\mathcal J,\;s=1,\ldots,S\quad\text{(demand constraints)}.
\end{split}
\end{equation}

In our experiments, 
we set $|\mathcal I| = |\mathcal G| = 5$ and $|\mathcal J| = 8$. The box constraints of $x$ and $z_s$ are $[8, 15]^5 \times [0,1]^5$ and $[0,5]^{5 \times 8}$ for each $s = 1,2,\cdots,S$. The unit costs in the first-stage $\{c_i\}_{i\in\mathcal I}\cup\{c_g\}_{g\in\mathcal G}$ are independently generated from a uniform distribution on $[0, 5]$. For each scenario, $\{q_{is}\}_{i\in\mathcal I}, \{\pi_{js}\}_{j\in\mathcal J}$ and $\{d_{js}\}_{j\in\mathcal J}$ are generated from truncated normal distributions $\mathcal N(1,5^2)$ on $[2,4], [3,5]$ and $[2,5]$, respectively. To construct a set of probabilities $\displaystyle\cup_{s=1}^S\{p_{sg}\}_{g\in\mathcal G}$ satisfying $\sum_{s=1}^S p_{sg} = 1$, we first randomly generate $S\times|\mathcal G|$ values from uniform distributions on $[0,1]$, and then   group every $|\mathcal G|$ values and normalizing them such that the sum of values in each group is $1$. 
All the experiments are conducted in \textsc{Matlab} 2021b on a Intel Xeon workstation with sixteen 3.70 GHz processors and 128 GB of RAM.

\subsection{Fixed scenarios}
Since the text example with fixed scenarios is in fact a large-scale nonconvex quadratic problem, it can also be directly solved by off-the-shelf nonlinear programming solvers. We compare the performance of our proposed decomposition algorithm based on the partial Moreau envelope (DPME) with the interior-point-based solvers
Knitro \cite{byrd2006k}  and IPOPT \cite{wachter2006implementation}, both of which run with linear solver MUMPS 5.4.1. The absolute and relative feasibility and optimality errors are computed according to the termination criteria of Knitro\footnote{Knitro user guide: https://www.artelys.com/docs/knitro/2\_userGuide/termination.html}. 

The quantities $\text{KKT}_{\rm abs}$ and $\text{KKT}_{\rm rel}$ are defined as the max of absolute and relative feasibility and optimality errors, respectively. The initial points are chosen to be the same for all algorithms. Although this may not necessarily force all algorithms to converge to the same objective values, we do observe such a phenomenon in the experiments.
Further implementation details  of these algorithms are provided below.\\[0.05in]
\underline{{\bf Knitro} (version 13.0.0)}: ``{\sl knitro\_qp}" function is called in our numerical experiments to solve nonconvex quadratic programs from the \textsc{Matlab} environment. We set ``hessopt = 0" to compute the exact Hessian in the interior point method instead of using the (L)BFGS approximations, as we have  observed that the former choice is faster for all the problems tested here. We directly set ``convex = 0" to declare  our problems are nonconvex so that the solver does not need to  spend  time on checking the convexity of the problems. We report the results based on  three different settings:
    \begin{enumerate}[leftmargin=*]
\item Knitro-direct: Set ``$\text{algorithm} = 1$" so that the direct solver is used to solve linear equations. For the termination options, the KKT relative and absolute  tolerance is set to be $10^{-4}$ and $10^{-2}$, respectively, i.e., ``$\text{feastol} = \text{opttol} = 10^{-4}$'' and ``$\text{feastol\_abs} = \text{opttol\_abs} = 10^{-2}$''.
        \item Knitro-CG-1: Set ``$\text{algorithm} = 2$" so that the KKT system is solved using a projected conjugate gradient method. Stopping criteria are the same as above.
        \item Knitro-CG-2: All are the same as Knitro-CG-1 except that the KKT relative and absolute  tolerance is set to be $10^{-6}$ and $10^{-3}$, respectively.
    \end{enumerate}
    \vskip 0.05in
 \underline{{\bf IPOPT} (version 3.14.4)}: 
 Due to different scaling strategies and reformulations, the termination criteria of Knitro and IPOPT are not directly comparable.
 We set ``ipopt.tol = $5 \times 10^{-2}$" in our experiments as we find the computed solutions based on this tolerance are about the same quality as those provided by Knitro. We also set ``ipopt.hessian\_constant = `yes' " to use exact Hessian in the interior point method, and have not adopted the (L)BFGS method for the same reason as mentioned above.\\[0.05in]
\underline{{\bf DPME}}: Each master problem for the first stage  and the subproblem of the  second stage  are  convex quadratic programs, which we have called Gurobi to solve.  We compute the absolute (denoted as $\text{Feasabs}_\nu$) and relative (denoted as $\text{Feasrel}_\nu$) feasibility errors using the same way as Knitro. Let the overall objective value  at the $\nu$-th outer loop be $\text{Obj}_\nu$. We terminate our algorithm if all of the conditions below are satisfied and then compute the absolute and relative KKT errors (as defined in Knitro) at the last iteration point:
    \begin{equation}\label{eq:stopping}
      \text{Feasabs}_\nu \leq 10^{-2}, \;\,   \text{Feasrel}_\nu \leq 10^{-4} \;\,    \mbox{and }\frac{|\text{Obj}_{\nu - 1} - \text{Obj}_\nu|}{\max\{1, |\text{Obj}_{\nu - 1}|\}} \leq 10^{-4}.
    \end{equation}

 Table \ref{tab:fixed} and Figure \ref{fig:fixed} summarize the performance of different algorithms when the number of scenarios $S$ varies from $1,000$ to $120,000$ over $100$ independent replications (the sizes of the deterministic equivalent problems are listed in Table \ref{table:problem size}). For each algorithm, we report the mean and the standard deviation of the total iteration numbers, the absolute and relative KKT errors, objective values, and the wall-clock time. We also conduct experiments for $S = 500,000$ over $10$ independent replications to demonstrate the scalability of our decomposition algorithm and put the results in Table \ref{tab:fixed}. One may find that  for small-sized problems (such as when $S < 10,000$), the interior point method that is implemented by both Knitro and IPOPT can solve the problem faster than our DPME, which may be due to two reasons: one is  that  the gain of the decomposition cannot compensate for the overhead of the communication between the master problem and the subproblems; the other is that we have not used the second order information as in the interior point method. However, for the cases where $S$ is large (such as $S \geq 10,000$), the DPME is the fastest method and attains the smallest average objective values. While the computational time of IPOPT and Knitro-direct quickly exceeds the preset limit, 
 the computational time of the DPME scales approximately linearly in terms of the number of scenarios. Although the Knitro-CG-1 can solve all the problems within the time limit,  it cannot produce solutions that have similar  objective values with other methods. When we switch the solver to  the Knitro-CG-2 setting, it could  provide solutions with similar objective values as DPME but needs significantly longer  computational time.
Therefore, we conclude that the DPME for fixed scenarios can significantly reduce the computational time for solving large-scale nonconvex two-stage SPs.


\begin{figure}
\centering
  \begin{minipage}{0.55\textwidth}
    \centering
   \includegraphics[scale=0.23]{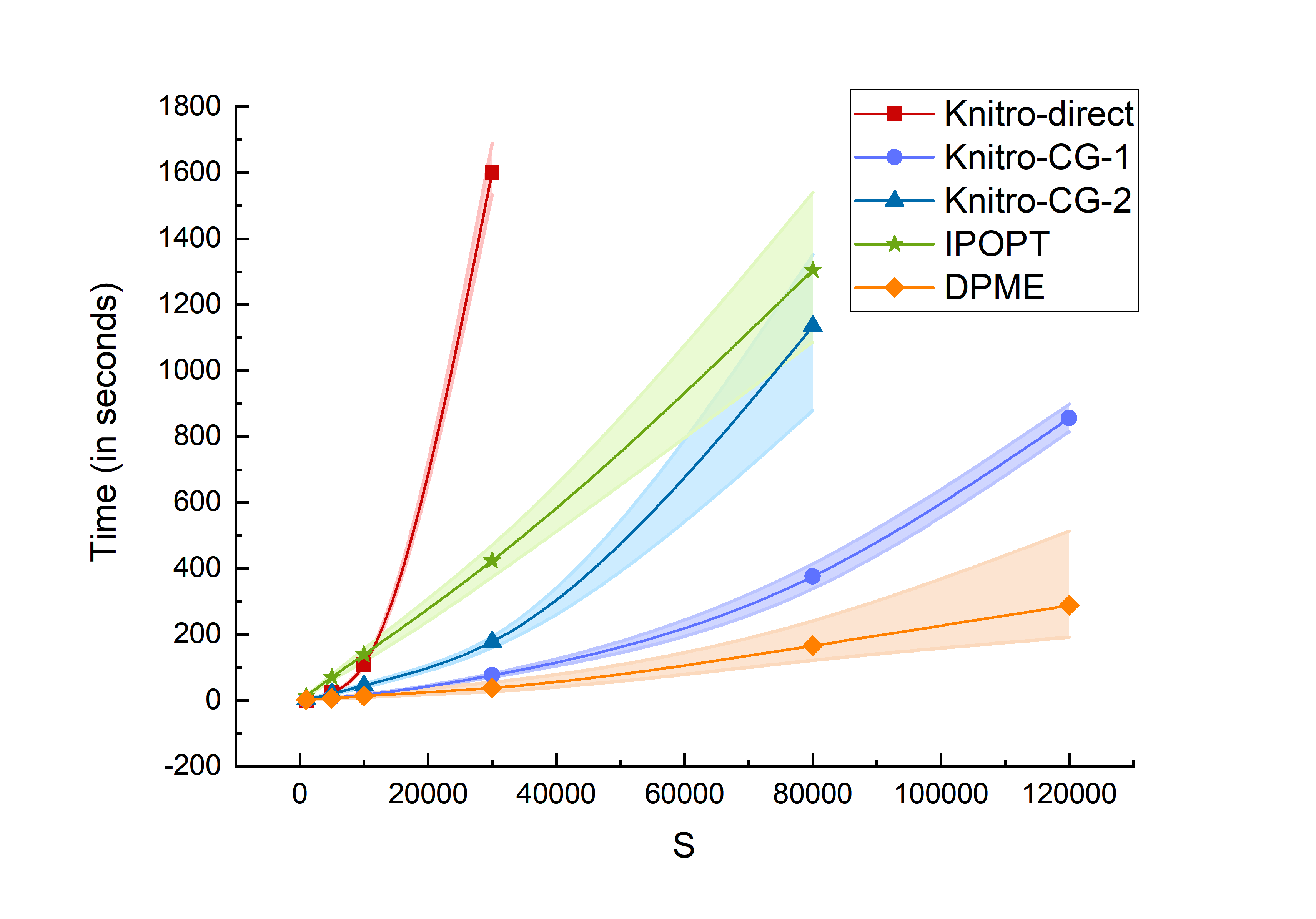}
   \vskip -0.2in
      \captionof{figure}{\scriptsize  Performance of all algorithms with different numbers of scenarios $S$ over $100$ independent replications. Shaded areas: the tubes between $10\%$ and $90\%$ quantiles of the running time; Solid lines: means of the running time.}
      \label{fig:fixed}
  \end{minipage}
  \hskip 0.12in
\begin{minipage}{0.4\linewidth}
\centering
\footnotesize
\begin{tabular}{c c c}
\toprule
\multirow{2}{*}{$S$} &\multicolumn{2}{c}{problem sizes}\\[0.05in]
\cline{2-3} \\[-0.08in]
&{rows} &{columns}\\[0.03in]
\toprule
    1,000 &93,022 &40,010\\[0.03in]
    5,000 &465,022 &200,010\\[0.03in]
    10,000 &930,022 &400,010\\[0.03in]
    30,000 &2,790,022 &1,200,010\\[0.03in]
    80,000 &7,440,022 &3,200,010\\[0.03in]
    120,000 &11,160,022 &4,800,010\\[0.03in]
    500,000 &46,500,022 &20,000,010\\[0.03in]
    \toprule
\end{tabular}
     \captionof{table}{\scriptsize Dimensions of test problems. ``Rows" stands for the number of constraints; ``Columns" stands for the number of variables.}
     \label{table:problem size}
    \end{minipage}
    \vskip -0.2in
\end{figure}

\begin{sidewaystable}
\footnotesize
\setlength\tabcolsep{3pt}
\begin{center}

\begin{tabular}{c c c c c c}
\toprule
\multirow{2}{*}{$S$} &{iterations} &{KKT$_\text{abs}$} &{KKT$_\text{rel}$} &{objective values} &{time (in seconds)}\\
 &{a $ | $ b $ | $ c $ | $ d $ | $ e}  &{a $ | $ b $ | $ c $ | $ d $ | $ e} &{a $ | $ b $ | $ c $ | $ d $ | $ e}  &{a $ | $ b $ | $ c $ | $ d $ | $ e} &{a $ | $ b $ | $ c $ | $ d $ | $ e}\\
\toprule
\multirow{2}*{1,000}
& 8$|$ 17$|$ 28$|$ 29$|$ {12}
& 3.0-4$|$ 7.4-4$|$ 4.0-6$|$ 7.4-5$|$ {1.6-3}
& 5.6-5$|$ 1.4-4$|$ 6.0-7$|$ 1.4-5$|$ {1.4-4}
&\multirow{2}{*}{93.056$|$ 93.055$|$ {\bf 93.053}$|$ 93.055$|$ {\bf 93.053}}
& {\bf 1}$|$ 2$|$ 3$|$ 13$|$ {3}\\

& (2$|$ 3$|$ 5$|$ 3$|$ {11})
& (1.6-4$|$ 1.1-3$|$ 1.5-6$|$ 1.1-5$|$ {2.1-3})
& (2.8-5$|$ 2.2-4$|$ 2.5-7$|$ 2.5-6$|$ {3.2-4})
&& (0$|$ 0$|$ 1$|$ 1$|$ {2})\\ \hline

\multirow{2}*{5,000}
& 9$|$ 16$|$ 42$|$ 31$|$ {11}
& 3.7-4$|$ 2.4-4$|$ 4.0-6$|$ 5.6-5$|$ {1.4-3}
& 6.9-5$|$ 4.9-5$|$ 7.0-7$|$ 1.1-5$|$ {1.2-4}
&\multirow{2}{*}{93.041$|$ 93.140$|$ {\bf 93.038}$|$ 93.040$|$ {\bf 93.038}}
& 24$|$ 8$|$ 21$|$ 70$|$ {\bf 7}\\

& (2$|$ 2$|$ 4$|$ 3$|$ {10})
& (1.5-4$|$ 1.0-4$|$ 1.6-6$|$ 2.2-5$|$ {1.7-3})
& (2.1-5$|$ 2.2-5$|$ 2.6-7$|$ 4.5-6$|$ {2.8-4})
&& (1$|$ 1$|$ 2$|$ 7$|$ {5})\\ \hline

\multirow{2}*{10,000}
& 10$|$ 16$|$ 43$|$ 32$|$ {11}
& 4.1-4$|$ 2.3-4$|$ 4.0-6$|$ 1.2-4$|$ {1.5-3}
& 7.5-5$|$ 4.9-5$|$ 6.0-7$|$ 2.1-5$|$ {1.3-4}
&\multirow{2}{*}{93.049$|$ 93.463$|$ {\bf 93.046}$|$ 93.059$|$ {\bf 93.046}}
& 108$|$ 17$|$ 45$|$ 139$|$ {\bf 13}\\

& (3$|$ 1$|$ 3$|$ 3$|$ {9})
& (1.2-4$|$ 1.1-4$|$ 1.5-6$|$ 6.4-4$|$ {1.8-3})
& (1.9-5$|$ 2.2-5$|$ 2.5-7$|$ 1.1-4$|$ {2.9-4})
&& (5$|$ 1$|$ 6$|$ 15$|$ {9})\\ \hline

\multirow{2}*{30,000}
& 10$|$ 16$|$ 47$|$ 35$|$ {11}
& 3.7-4$|$ 2.3-4$|$ 3.0-6$|$ 4.0-4$|$ {1.5-3}
& 6.3-5$|$ 5.0-5$|$ 5.0-7$|$ 7.2-5$|$ {1.3-4}
&\multirow{2}{*}{93.068$|$ 98.336$|$ 93.061$|$ 93.076$|$ {\bf 93.060}}
& 1600$|$ 76$|$ 178$|$ 424$|$ {\bf 38}\\

& (4$|$ 2$|$ 5$|$ 3$|$ {9})
& (1.5-4$|$ 8.3-5$|$ 1.4-6$|$ 2.1-3$|$ {1.8-3})
& (2.4-5$|$ 1.8-5$|$ 2.4-7$|$ 3.8-4$|$ {2.9-4})
&& (63$|$ 4$|$ 21$|$ 38$|$ {28})\\ \hline

\multirow{2}*{80,000}
& --$|$ 14$|$ 46$|$ 35$|$ {11}
&  --$|$ 2.2-4$|$ 3.0-6$|$ 7.7-5$|$ {1.4-3}
&  --$|$ 4.6-5$|$ 5.0-7$|$ 1.6-5$|$ {1.2-4}
&\multirow{2}{*}{--$|$ 122.644$|$ {\bf 93.078}$|$ 93.127$|$ {\bf 93.078}}
& {\bf t}$|$ 376$|$ 1135$|$ 1305$|$ {\bf 166}\\

& (--$|$ 4$|$ 8$|$ 4$|$ {9})
& (--$|$ 6.6-5$|$ 1.3-6$|$ 3.8-4$|$ {1.8-3})
& (--$|$ 1.4-5$|$ 2.3-7$|$ 8.1-5$|$ {2.8-4})
&& (--$|$ 28$|$ 181$|$ 177$|$ {121})\\ \hline

\multirow{2}*{120,000}
&  --$|$ 11$|$ --$|$ --$|$ {12}
&  --$|$ 2.4-4$|$ --$|$ --$|$ {1.5-3}
&  --$|$ 5.1-5$|$ --$|$ --$|$ {1.4-4}
&\multirow{2}{*}{--$|$ 133.885$|$ --$|$ --$|$ {\bf 93.101}}
&  {\bf t}$|$ 856$|$ {\bf t}$|$ {\bf t}$|$ {\bf 288}\\

&  (--$|$ 3$|$ --$|$ --$|$ {11})
&  (--$|$ 8.3-5$|$ --$|$ --$|$ {1.8-3})
&  (--$|$ 1.7-5$|$ --$|$ --$|$ {3.0-4})
&&  (--$|$ 40$|$ --$|$ --$|$ {246}) \\ \hline

\multirow{2}*{500,000}
& --$|$ 11$|$ --$|$ --$|$ {14}
& --$|$ 2.0-4$|$ --$|$ --$|$ {1.5-3}
& --$|$ 4.1-5$|$ --$|$ --$|$ {1.3-4}
&\multirow{2}{*}{--$|$ 141.974$|$ --$|$ --$|$ {\bf 96.485}}
& {\bf t}$|$ 17464$|$ {\bf t}$|$ {\bf t}$|$ {\bf 1681}\\

&  (--$|$ 2$|$ --$|$ --$|$ {13})
&  (--$|$ 8.8-7$|$ --$|$ --$|$ {8.3-4})
&  (--$|$ 3.0-6$|$ --$|$ --$|$ {1.6-4})
&& (--$|$ 553$|$ --$|$ --$|$ {1571}) \\
\toprule
\end{tabular}
\caption{\scriptsize The performance of Knitro-direct, Knitro-CG, IPOPT, and DPME. In the table, ``$S$'' is the number of scenarios; the numbers without parentheses are the means over $100$ replications and the  numbers in parentheses are the standard deviations;  ``t" means the method exceeds the preset time limit, which is $1,800$s for $S \leq 120,000$ and $18,000$s for $S = 500,000$; ``$a$" stands for Knitro-direct; ``$b$" stands for Knitro-CG-1; ``$c$" stands for Knitro-CG-2; ``$d$" stands for IPOPT and ``$e$" stands for DPME.}
\label{tab:fixed}

\begin{tabular}{ c c c c c c c}
\toprule
\multirow{2}{*}{$\sigma$} & \multirow{2}{*}{$\eta$} & {iterations} & \multirow{2}{*}{KKT$_\text{abs}$} & \multirow{2}{*}{KKT$_\text{rel}$} & \multirow{2}{*}{objective values} & \multirow{2}{*}{time (s)}\\
&& {outer $|$ total} &&&&\\
\toprule

\multirow{3}*{0.5}

& 100
& 2 (0) $|$ 6 (5)
& 4.3-3 (1.7-2)
& 1.1-3 (4.6-3)
& 96.224
& {26 (13)}\\

& 200
& 2 (0) $|$ 6 (5)
& 3.0-3 (1.3-2)
& 8.4-4 (3.8-3)
& 96.221
& {26 (16)}\\

& 400
& 2 (0) $|$ {6 (7)}
& 2.2-3 (1.1-2)
& 5.1-4 (2.6-3)
& 96.219
& {32 (27)}\\

& 800
& 2 (0) $|$ 6 (5)
& 7.3-4 (4.3-3)
& 1.6-4 (9.1-4)
& 96.216
& {32 (18)}\\

& --
& 2 {(0)} $|$ 6 (5)
& 3.8-4 (2.2-3)
& 9.1-5 (5.1-4)
& 96.215
& {42 (27)}\\ \hline

\multirow{3}*{2}

& 100
& 2 (0) $|$ 8 (11)
& {3.8-2} (1.1-1)
& {7.5-3 (2.2-2)}
& {93.458}
& {32 (29)}\\

& 200
& 2 (0) $|$ 7 (8)
& {8.2-3} (3.7-2)
& 1.5-3 (6.7-3)
& 93.406
& {31 (26)}\\

& 400
& 2 (0) $|$ 7 (8)
& {4.1-3} (1.6-2)
& {7.5-4} (2.8-3)
& {93.400}
& {32 (26)}\\

& 800
& 2 (0) $|$ 6 {(6)}
& {2.7-3} (1.3-2)
& {5.0-4} (2.4-3)
& {93.400}
& {34 (23)}\\

& 1600
& 2 (0) $|$ 6 (5)
& {2.0-3} (8.6-3)
& {3.7-4} (1.6-3)
& 93.397
& {32 (19)}\\

& --
& 2 (0) $|$ 6 (5)
& {5.1-4 (1.9-3)}
& {8.8-5 (3.1-4)}
& 93.395
& {42 (27)}\\ \hline

\multirow{3}*{5}

& 100
& {3} (0) $|$ {15 (18)}
& {1.2-1} (2.0-1)
& {1.8-2 (3.1-2)}
& {92.444}
& {48} (46)\\

& 400
& 2 (0) $|$ {13 (16)}
& {3.2-2 (5.9-2)}
& {4.7-3 (9.4-3)}
& {92.289}
& {56 (58)}\\

& 1600
& 2 (0) $|$ {11 (15)}
& {2.4-2 (3.7-2)}
& {3.3-3 (5.2-3)}
& {92.271}
& {53 (56)}\\

& 3200
& 2 (0) $|$ {11 (15)}
& {2.2-2 (3.7-2)}
& {3.1-3 (5.1-3)}
& {92.268}
& {53 (59)}\\

& 6400
& 2 (0) $|$ {11 (15)}
& {2.1-2 (3.7-2)}
& {2.9-3 (5.1-3)}
& {92.266}
& {58 (66.1)}\\


& --
& 2 (0) $|$ {11 (15)}
& {2.1-2 (3.7-2)}
& {2.9-3 (5.0-3)}
& {92.266}
& {70 (82)}\\
\toprule
\end{tabular}
\caption{\footnotesize The performance of the sampling-based DPME. In the table, ``$\sigma$" is the variance of the normal distribution from which we generate the data; ``$\eta$'' represents the linear growth rate of sample size such that the number of scenarios used in $\nu$-th outer iteration $S_\nu = \eta \nu$; ``--'' in the column of $\eta$ stands for the benchmark of DPME using full scenarios.}
\label{tab:sampling}
\end{center}
\end{sidewaystable}

\subsection{Sampling-based decomposition}

We test the sampling-based DPME proposed in  Algorithm \ref{alg:sampling_based} for the same test problem  with the total number of scenarios $S = 50,000$. Instead of using all scenarios at each iteration, we gradually add them to reduce the computational cost especially at the early stage. In our experiments, the sample size $S_\nu$ is taken as $ \eta \nu$ for different positive parameter $\eta$.
In order to understand how the growth rate $\eta$ depends on the distributions of the random scenarios, we generate  $\{q_{is}\}_{i\in\mathcal I},\{\pi_{js}\}_{j\in\mathcal J}$ and $\{d_{js}\}_{j\in\mathcal J}$ from truncated normal distributions $\mathcal N(1,\sigma^2)$ on $[2,13], [3,10]$ and $[2,5]$ with $\sigma \in \{0.5, 2, 5\}$, and varies the values of $\eta$. 

The stopping criteria for the sampling-based DPME is the same as \eqref{eq:stopping}, where we check the violation of the KKT system for the deterministic equivalent problem formulated by  all scenarios $S = 50,000$. However, unlike the case for fixed scenarios, we do not have all the second-stage solutions $\{y_s\}_{s=1}^S$ to compute the KKT residual since some samples may not have been used yet. To {resolve} this issue, we compute all  $\{y_s\}_{s=1}^S$  at every $\nu$-th outer iteration, and then estimate the  multipliers corresponding to  the first-stage budget and box constraints by  minimizing the current KKT residual.

In Table \ref{tab:sampling}, we summarize the performance of Algorithm \ref{alg:sampling_based} for different combinations of $(\sigma, \eta)$, where we also provide the results obtained from Algorithm \ref{alg:finite_samples} without sampling for benchmarks. It can be observed from the table that problems with larger variability may need faster growth rate of the batch size to retain the same level of solution quality. If the growth rate is properly chosen, the sampling-based DPME can outperform the fixed-scenario version in the computational time with comparable solution qualities.

\section{Conclusion}
Compared with the extensive literature on the algorithms for convex (especially linear) two-stage SPs, efficient computational algorithms for solving continuous nonconvex two-stage SPs have been much less explored.
In this paper, we have made a first attempt on developing the decomposition scheme for a special class of latter problems.
The key of the proposed algorithm is the derivation of successive strongly convex approximations of the nonconvex recourse functions. We hope the work done in paper could stimulate researchers' interests in a broader paradigm of SPs that goes beyond the classical convex settings. There are a lot of open questions that deserve future investigations, such as 
 how to combine the stochastic dual dynamic programming approach with the  tools developed in the current paper  to solve nonconvex  multistage  SPs, 
as well as  how to design rigorous stopping criteria for the general nonconvex SPs with continuous distributions.

\bibliography{reference}

\begin{thebibliography}{10}

\bibitem{an2005dc}
Le~Thi~Hoai An and Pham~Dinh Tao.
\newblock The {DC} (difference of convex functions) programming and {DCA}
  revisited with {DC} models of real world nonconvex optimization problems.
\newblock {\em Annals of Operations Research}, 133(1-4):23--46, 2005.

\bibitem{artstein1975strong}
Zvi Artstein and Richard~A Vitale.
\newblock A strong law of large numbers for random compact sets.
\newblock {\em The Annals of Probability}, pages 879--882, 1975.

\bibitem{asplund1973differentiability}
Edgar Asplund.
\newblock Differentiability of the metric projection in finite-dimensional
  {Euclidean} space.
\newblock {\em Proceedings of the American Mathematical Society},
  38(1):218--219, 1973.

\bibitem{benders1962partitioning}
Jacques~F Benders.
\newblock Partitioning procedures for solving mixed-variables programming
  problems.
\newblock {\em Numerische mathematik}, 4(1):238--252, 1962.

\bibitem{bertsekas1971control}
Dimitri~P Bertsekas.
\newblock {\em Control of uncertain systems with a set-membership description
  of the uncertainty.}
\newblock PhD thesis, Massachusetts Institute of Technology, 1971.

\bibitem{birge2011introduction}
John~R Birge and Francois Louveaux.
\newblock {\em Introduction to Stochastic Programming}.
\newblock Springer Science \& Business Media, 2011.

\bibitem{bodur2017strengthened}
Merve Bodur, Sanjeeb Dash, Oktay G{\"u}nl{\"u}k, and James Luedtke.
\newblock Strengthened {Benders} cuts for stochastic integer programs with
  continuous recourse.
\newblock {\em INFORMS Journal on Computing}, 29(1):77--91, 2017.

\bibitem{boland2018combining}
Natashia Boland, Jeffrey Christiansen, Brian Dandurand, Andrew Eberhard, Jeff
  Linderoth, James Luedtke, and Fabricio Oliveira.
\newblock Combining progressive hedging with a {Frank}--{Wolfe} method to
  compute {Lagrangian} dual bounds in stochastic mixed-integer programming.
\newblock {\em SIAM Journal on Optimization}, 28(2):1312--1336, 2018.

\bibitem{bomze2022two}
Immanuel~M Bomze, Markus Gabl, Francesca Maggioni, and Georg~Ch Pflug.
\newblock Two-stage stochastic standard quadratic optimization.
\newblock {\em European Journal of Operational Research}, 299(1):21--34, 2022.

\bibitem{bonnans2013perturbation}
J~Fr{\'e}d{\'e}ric Bonnans and Alexander Shapiro.
\newblock {\em Perturbation Analysis of Optimization Problems}.
\newblock Springer Science \& Business Media, 2013.

\bibitem{borges2020regularized}
Pedro Borges, Claudia Sagastiz{\'a}bal, and Mikhail Solodov.
\newblock A regularized smoothing method for fully parameterized convex
  problems with applications to convex and nonconvex two-stage stochastic
  programming.
\newblock {\em Mathematical Programming}, pages 1--33, 2020.

\bibitem{byrd2006k}
Richard~H Byrd, Jorge Nocedal, and Richard~A Waltz.
\newblock Knitro: An integrated package for nonlinear optimization.
\newblock In {\em Large-scale nonlinear optimization}, pages 35--59. Springer,
  2006.

\bibitem{caroe1999dual}
Claus~C Car{\o}e and R{\"u}diger Schultz.
\newblock Dual decomposition in stochastic integer programming.
\newblock {\em Operations Research Letters}, 24(1-2):37--45, 1999.

\bibitem{chun1995scenario}
Bock~Jin Chun and Stephen~M Robinson.
\newblock Scenario analysis via bundle decomposition.
\newblock {\em Annals of Operations Research}, 56(1):39--63, 1995.

\bibitem{clarke1975generalized}
Frank~H Clarke.
\newblock Generalized gradients and applications.
\newblock {\em Transactions of the American Mathematical Society},
  205:247--262, 1975.

\bibitem{clarke1990optimization}
Frank~H Clarke.
\newblock {\em Optimization and Nonsmooth Analysis}.
\newblock SIAM, 1990.

\bibitem{combettes2001quasi}
Patrick~L Combettes.
\newblock {Quasi-Fej}{\'e}rian analysis of some optimization algorithms.
\newblock In {\em Studies in Computational Mathematics}, volume~8, pages
  115--152. Elsevier, 2001.

\bibitem{cormican1998stochastic}
Kelly~J Cormican, David~P Morton, and R~Kevin Wood.
\newblock Stochastic network interdiction.
\newblock {\em Operations Research}, 46(2):184--197, 1998.

\bibitem{cui2020modern}
Ying Cui and Jong-Shi Pang.
\newblock {\em Modern Nonconvex Nondifferentiable Optimization}.
\newblock SIAM, 2021.

\bibitem{danskin2012theory}
John~M Danskin.
\newblock {\em The theory of max-min and its application to weapons allocation
  problems}, volume~5.
\newblock Springer Science \& Business Media, 2012.

\bibitem{ermoliev2013sample}
Yuri~M Ermoliev and Vladimir~I Norkin.
\newblock Sample average approximation method for compound stochastic
  optimization problems.
\newblock {\em SIAM Journal on Optimization}, 23(4):2231--2263, 2013.

\bibitem{folland1999real}
Gerald~B Folland.
\newblock {\em Real analysis: modern techniques and their applications},
  volume~40.
\newblock John Wiley \& Sons, 1999.

\bibitem{goel2004stochastic}
Vikas Goel and Ignacio~E Grossmann.
\newblock A stochastic programming approach to planning of offshore gas field
  developments under uncertainty in reserves.
\newblock {\em Computers \& Chemical Engineering}, 28(8):1409--1429, 2004.

\bibitem{goel2005lagrangean}
Vikas Goel and Ignacio~E Grossmann.
\newblock A {L}agrangian duality based branch and bound for solving linear
  stochastic programs with decision dependent uncertainty.
\newblock In {\em Computer Aided Chemical Engineering}, volume~20, pages
  55--60. Elsevier, 2005.

\bibitem{goel2006class}
Vikas Goel and Ignacio~E Grossmann.
\newblock A class of stochastic programs with decision dependent uncertainty.
\newblock {\em Mathematical Programming}, 108(2):355--394, 2006.

\bibitem{guignard2003lagrangean}
Monique Guignard.
\newblock Lagrangean relaxation.
\newblock {\em Top}, 11(2):151--200, 2003.

\bibitem{guigues2021inexactmirror}
Vincent Guigues.
\newblock Inexact stochastic mirror descent for two-stage nonlinear stochastic
  programs.
\newblock {\em Mathematical Programming}, 187(1):533--577, 2021.

\bibitem{guigues2021inexact}
Vincent Guigues, Renato Monteiro, and Benar Svaiter.
\newblock Inexact cuts in stochastic dual dynamic programming applied to
  multistage stochastic nondifferentiable problems.
\newblock {\em SIAM Journal on Optimization}, 31(3):2084--2110, 2021.

\bibitem{hao2020piecewise}
Tianyu Hao and Jong-Shi Pang.
\newblock Piecewise affine parameterized value-function based bilevel
  non-cooperative games.
\newblock {\em Mathematical Programming}, 180(1):33--73, 2020.

\bibitem{hellemo2018decision}
Lars Hellemo, Paul~I Barton, and Asgeir Tomasgard.
\newblock Decision-dependent probabilities in stochastic programs with
  recourse.
\newblock {\em Computational Management Science}, 15(3):369--395, 2018.

\bibitem{higle2013stochastic}
Julia~L Higle and Suvrajeet Sen.
\newblock {\em Stochastic decomposition: a statistical method for large scale
  stochastic linear programming}, volume~8.
\newblock Springer Science \& Business Media, 2013.

\bibitem{Homem-de-Mello03}
Tito Homem-de Mello.
\newblock Variable-sample methods for stochastic optimization.
\newblock {\em ACM Transactions on Modeling and Computer Simulation},
  13(2):10--133, 2003.

\bibitem{homem2014monte}
Tito Homem-de Mello and G{\"u}zin Bayraksan.
\newblock {Monte Carlo} sampling-based methods for stochastic optimization.
\newblock {\em Surveys in Operations Research and Management Science},
  19(1):56--85, 2014.

\bibitem{jonsbraaten1998class}
Tore~W Jonsbr{\aa}ten, Roger J.-B. Wets, and David~L Woodruff.
\newblock A class of stochastic programs with decision dependent random
  elements.
\newblock {\em Annals of Operations Research}, 82:83--106, 1998.

\bibitem{liu2020two}
Junyi Liu, Ying Cui, Jong-Shi Pang, and Suvrajeet Sen.
\newblock Two-stage stochastic programming with linearly bi-parameterized
  quadratic recourse.
\newblock {\em SIAM Journal on Optimization}, 30(3):2530--2558, 2020.

\bibitem{louveaux1988optimal}
Fran{\c{c}}ois~V Louveaux.
\newblock Optimal investments for electricity generateion: A stochastic model
  and a test problem.
\newblock {\em Numerical Techniques for Stochastic Optimization}, 1988.

\bibitem{lucet2006fast}
Yves Lucet.
\newblock Fast {Moreau} envelope computation {I}: Numerical algorithms.
\newblock {\em Numerical Algorithms}, 43(3):235--249, 2006.

\bibitem{mordukhovich2006variational}
Boris~S Mordukhovich.
\newblock {\em Variational Analysis and Generalized Differentiation I: Basic
  Theory}, volume 330.
\newblock Springer Science \& Business Media, 2006.

\bibitem{nemirovski2009robust}
Arkadi Nemirovski, Anatoli Juditsky, Guanghui Lan, and Alexander Shapiro.
\newblock Robust stochastic approximation approach to stochastic programming.
\newblock {\em SIAM Journal on Optimization}, 19(4):1574--1609, 2009.

\bibitem{nohadani2018optimization}
Omid Nohadani and Kartikey Sharma.
\newblock Optimization under decision-dependent uncertainty.
\newblock {\em SIAM Journal on Optimization}, 28(2):1773--1795, 2018.

\bibitem{pasupathy2021adaptive}
Raghu Pasupathy and Yongjia Song.
\newblock Adaptive sequential sample average approximation for solving
  two-stage stochastic linear programs.
\newblock {\em SIAM Journal on Optimization}, 31(1):1017--1048, 2021.

\bibitem{planiden2019proximal}
Chayne Planiden and Xianfu Wang.
\newblock Proximal mappings and {Moreau} envelopes of single-variable convex
  piecewise cubic functions and multivariable gauge functions.
\newblock In {\em Nonsmooth optimization and its applications}, pages 89--130.
  Springer, 2019.

\bibitem{polyak1990new}
Boris~T Polyak.
\newblock New stochastic approximation type procedures.
\newblock {\em Automat. i Telemekh}, 7(98-107):2, 1990.

\bibitem{robbins1951stochastic}
Herbert Robbins and Sutton Monro.
\newblock A stochastic approximation method.
\newblock {\em The annals of mathematical statistics}, pages 400--407, 1951.

\bibitem{robbins1971convergence}
Herbert Robbins and David Siegmund.
\newblock A convergence theorem for nonnegative almost supermartingales and
  some applications.
\newblock In {\em Optimizing Methods in Statistics}, pages 233--257. Elsevier,
  1971.

\bibitem{rockafellar1970convex}
R~Tyrrell Rockafellar.
\newblock {\em Convex analysis}, volume~18.
\newblock Princeton university press, 1970.

\bibitem{rockafellar2019progressive}
R~Tyrrell Rockafellar.
\newblock Progressive decoupling of linkages in optimization and variational
  inequalities with elicitable convexity or monotonicity.
\newblock {\em Set-Valued and Variational Analysis}, 27(4):863--893, 2019.

\bibitem{rockafellar2020augmented}
R~Tyrrell Rockafellar.
\newblock Augmented {Lagrangians} and hidden convexity in sufficient conditions
  for local optimality.
\newblock {\em Mathematical Programming}, 192, 2022.

\bibitem{rockafellar1991scenarios}
R~Tyrrell Rockafellar and Roger J-B Wets.
\newblock Scenarios and policy aggregation in optimization under uncertainty.
\newblock {\em Mathematics of Operations Research}, 16(1):119--147, 1991.

\bibitem{rockafellar2009variational}
R~Tyrrell Rockafellar and Roger J-B Wets.
\newblock {\em Variational Analysis}, volume 317.
\newblock Springer Science \& Business Media, 2009.

\bibitem{RoysetPolak07}
Johannes Royset and Elijah Polak.
\newblock Extensions of stochastic optimization results to problems with system
  failure probability functions.
\newblock {\em Journal of Optimization Theory Applications}, 133:1--18, 2007.

\bibitem{ruszczynski1997decomposition}
Andrzej Ruszczy{\'n}ski.
\newblock Decomposition methods in stochastic programming.
\newblock {\em Mathematical Programming}, 79(1-3):333--353, 1997.

\bibitem{schultz2003stochastic}
R{\"u}diger Schultz.
\newblock Stochastic programming with integer variables.
\newblock {\em Mathematical Programming}, 97(1):285--309, 2003.

\bibitem{shapiro2003monte}
Alexander Shapiro.
\newblock {Monte Carlo} sampling methods.
\newblock {\em Handbooks in Operations Research and Management Science},
  10:353--425, 2003.

\bibitem{shapiro2021lectures}
Alexander Shapiro, Darinka Dentcheva, and Andrzej Ruszczy{\'n}ski.
\newblock {\em Lectures on Stochastic Programming: Modeling and Theory}.
\newblock SIAM, 2021.

\bibitem{shapiro2007uniform}
Alexander Shapiro and Huifu Xu.
\newblock Uniform laws of large numbers for set-valued mappings and
  subdifferentials of random functions.
\newblock {\em Journal of Mathematical Analysis and Applications},
  325(2):1390--1399, 2007.

\bibitem{tarhan2009stochastic}
Bora Tarhan, Ignacio~E Grossmann, and Vikas Goel.
\newblock Stochastic programming approach for the planning of offshore oil or
  gas field infrastructure under decision-dependent uncertainty.
\newblock {\em Industrial \& Engineering Chemistry Research}, 48(6):3078--3097,
  2009.

\bibitem{van1969shaped}
Richard~M Van~Slyke and R.~J.-B. Wets.
\newblock L-shaped linear programs with applications to optimal control and
  stochastic programming.
\newblock {\em SIAM Journal on Applied Mathematics}, 17(4):638--663, 1969.

\bibitem{wachter2006implementation}
Andreas W{\"a}chter and Lorenz~T Biegler.
\newblock On the implementation of an interior-point filter line-search
  algorithm for large-scale nonlinear programming.
\newblock {\em Mathematical Programming}, 106(1):25--57, 2006.

\bibitem{wets1984large}
R.~J.-B. Wets.
\newblock Large scale linear programming techniques in stochastic programming.
\newblock {\em IIASA Working Paper}, 1984.

\end{thebibliography}

\end{document}